\documentclass[11pt,letterpaper]{amsart}

\usepackage{custom-template, shortcuts,graphicx, setspace, float}
\usepackage[thinlines]{easytable}
\usepackage{makecell}
\makeatletter
\@namedef{subjclassname@2020}{\textup{2020} Mathematics Subject Classification}
\makeatother

\title{Experiments on $3$-isogeny Selmer groups of elliptic curves with a $3$-torsion point}
\author{Ariel Weiss}
\address{Ariel Weiss, Department of Mathematics, Trinity College, Hartford CT\vspace{-5pt}}
\email{ariel.weiss@trincoll.edu}
\author {Dongchen Zou}
\address{Dongchen Zou, University of Chicago, Chicago IL\vspace{-5pt}}
\email{dcz0711@uchicago.edu}
\date{}
\subjclass[2020]{Primary 11G05; Secondary 15B52, 11Y40}

\keywords{Elliptic curves,  arithmetic statistics, isogeny Selmer groups,global Selmer ratios}
\begin{document}
	\maketitle
	\begin{abstract}
		Let $\eab\:y^2 +Axy + By = x^3$ be an elliptic curve over $\Q$. The $3$-torsion point $(0,0)$ induces a $3$-isogeny $\phi\:\eab\to\eabh$. Assuming $\eab$ has good reduction at $3$, we construct an explicit $(m+t)\times m$ matrix $\mab'$ over $\F_3$, whose kernel encodes the dual isogeny Selmer group $\Sel_{\hat\phi}(\eabh)$ modulo the image of the torsion point $(0,0)$. Here, $m = \omega(B) - 1$, and $t$ encodes the \emph{global Selmer ratio} $3^{t-2}$. 
		
		We compute $\mab'$ in various regimes for billions of elliptic curves $\eab$. Based on our data, and motivated by prevalence of random linear algebraic models throughout number theory, we conjecture that, for fixed $m$ and $t$, the matrices $\mab'$ become uniformly distributed, and we formulate a corresponding conjecture for the distribution of $\Sel_{\phi}(\eab)$. Our model predicts that, for fixed $t$, the average size of $\Sel_{\phi}(\eab)$ is $1 + 3^t$.
	\end{abstract}
	
	\section{Introduction}
	
	Let $\E$ be a family of elliptic curves. A fundamental problem in arithmetic statistics is to understand the \emph{average} behaviour of curves  in $\E$. How are their ranks and Selmer ranks distributed? What can be said about their Tate--Shafarevich groups and other arithmetic invariants? For the family of all elliptic curves over $\Q$, the conjectural answers to these questions are expressed through random linear algebra \cite{delaunay2,Delaunay,PoonenRains,BKLPR,ppvw}. Random linear algebra plays a similar role elsewhere in number theory. For example, the Cohen--Lenstra heuristics \cite{cohenlenstra} model class groups by
	random finite abelian groups, and the Friedman--Washington model \cite{friedmanwashington} realises
	closely related distributions as cokernels of random $p$-adic matrices. The recurring principle is that once the visible arithmetic constraints have been imposed, the remaining statistics should be governed by a natural random linear-algebraic object.
	
	This paper asks what that principle should mean when every curve in the family $\E$ carries an isogeny. If $\phi\:E\to E'$ is an isogeny and $\widehat\phi$ its dual, then the Selmer groups $\Selp$ and $\dSelp$ are linked by global duality. Indeed, by a formula of Cassels \cite{CasselsVIII}, we have 
	\begin{equation}\label{eq:greenberg-wiles}
		\frac{\#\Selp}{\#\dSelp} = \frac{\#E[\phi](\Q)}{\#E'[\widehat\phi](\Q)}c(\phi, E)
	\end{equation}
	where $c(\phi, E)$ denotes the \emph{(global) Selmer ratio}, a product $c(\phi, E) = \prod_{v\le \infty}c_v(\phi, E)$, where
	\[c_v(\phi, E) = \frac{\#\coker(E(\Q_v)\xrightarrow{\phi}E'(\Q_v))}{\#\ker(E(\Q_v)\xrightarrow{\phi}E'(\Q_v))}\]
	is the \emph{local Selmer ratio}. Thus, the two groups $\Selp$ and $\dSelp$ cannot be modelled independently. The distribution of the global Selmer ratio can have a decisive impact on the distributions of $\Selp$ and $\dSelp$. In many natural families, its variation can force one of the two isogeny Selmer groups to become typically large and make its average size and higher moments infinite \cite{klags-lo1, klags-lo2,chan-h-li,abs,Chan_2023,Chan_2025,phillips}. Koymans--Smith \cite{koymans-smith} conjecture more generally that \textit{all} unbounded Selmer moments arise from constraints imposed by global duality.
	These results suggest that the global Selmer ratio should be treated as one of the visible arithmetic constraints in the principle above: in order for the distribution of $\Selp$ and $\dSelp$ to be governed by linear algebra, we should first control for the global Selmer ratio.
	
	We investigate this principle experimentally in the family $\E$ of elliptic curves over $\Q$ with a rational point of order $3$. Every curve in $\E$ has a unique normalised model
	\[\eab\:y^2 + Axy + By = x^3\]
	with $A, B\iZ$, $B>0$ and $p^3\nmid B$ whenever $p\mid A$ (see \Cref{sec:elliptic curve}). The $3$-torsion point $(0,0)$ induces a $3$-isogeny $\phi\:\eab\to\eabh$. We let $H(\eab) = \max\set{|A|^3, B}$ be the height of $\eab$.
	
	This family is a particularly revealing test case: Bhargava--Ho prove that its average rank is bounded \cite{BhargavaHoII}, while recent work of Chan--Verzobio, Phillips, and Koymans--Smith shows that variation in the global Selmer ratio leads to arbitrarily large Selmer groups and infinite Selmer moments \cite{Chan_2025,phillips,koymans-smith}. We conjecture that, after controlling for the visible arithmetic constraints---torsion, and the global Selmer ratio---the remaining Selmer distribution is governed by random linear algebra. The particular random-matrix model comes from an explicit $3$-descent: after removing the class forced by rational torsion, we show that the $3$-isogeny Selmer groups are described by the kernels of an explicit matrix and its transpose.
	
	\begin{conjecture}\label{conj:intro}
		Fix an integer $t$, and let $\E_t$ denote the set of elliptic curves $E\in\E$ with global Selmer ratio $c(\phi,E)=3^{t-2}$.  Let $\E_t(X)$ denote the set of elliptic curves $E\in\E_t$ with height at most $X$. Then for every integer $d\ge \max(0, -t)$
		\[\lim_{X\to\infty}\frac{\#\set{E\in \E_t(X) :\dim\dSelp = d+1}}{\#\E_t(X)} = \P_t(d),\]
		where $\P_t(d)$ is the limit, as $m\to \infty$ of the probability that a random $(m+t)\times m$ matrix over $\F_3$ has nullity $d$.
	\end{conjecture}
	
	The indexing in the conjecture incorporates the two constraints we are controlling for. Indeed, we have $\#E[\phi](\Q) = 3$ and $\#E'[\widehat\phi](\Q)=1$, so \eqref{eq:greenberg-wiles} gives
	\begin{equation}\label{eq:tam}
		\dim_{\F_3}\Selp - \dim_{\F_3}\dSelp = t - 1
	\end{equation}
	for every $E\in \E_t$. When $B$ is not a cube, the $3$-torsion point $(0,0)\in\eab(\Q)$ maps to a non-zero class in $\Sel_{\hat\phi}(\eabh)$. Therefore, accounting for this class, if $d = \dim\Sel_{\hat\phi}(\eabh) -1$, then $d\ge \max(0,-t)$, which is also the minimum nullity of an $(m+t)\times m$ matrix. 
	
	\Cref{conj:intro} asserts that the distribution of $\dSelp$ for $E\in \E_t$ matches a random matrix distribution. In fact, our experiments support the more refined conjecture that the distribution of $\dSelp$ is actually determined by random matrices.
	We begin with the explicit 3-descent of Cohen--Pazuki \cite{Cohen_2009} and the local descriptions used by Chan \cite{Chan_2023}. Our construction relies on the identification $H^1(\Q, \eabh[\widehat\phi]) \simeq H^1(\Q, \mu_3)\simeq \qmc$, which allows us to identify every $\Sel_{\hat\phi}(\eabh)$ as a subgroup of $\qmc$. Under this identification, the Kummer map maps the torsion point $(0,0)$ to $[B]\ii\in\qmc$.
	Suppose that $\eab\in\E_t$ has good reduction at $3$ and that neither $B$ nor $A^3-27B$ is a rational cube. In \Cref{sec:matrix}, we construct an explicit matrix $\mab$ over $\F_3$ with dimensions $(m+t)\times (m+1)$, where $m =\omega(B) -1$, and prove in \Cref{thm:ker',thm: matrix cols} that
	\[\nullity\mab = \dim_{\F_3}\br{\Sel_{\hat\phi}(\eabh)},\]
	and that
	\[\nullity\mab^T = \dim_{\F_3}\Sel_\phi(\eab).\]
	The columns of $\mab$ are indexed by the primes dividing $B$, and its rows are indexed by the primes $p\mid (A^3 -27B)$ such that $p\equiv 1\pmod 3$. The torsion class $(0,0)$ gives a known vector in the kernel and deleting a column on which this vector is non-zero gives a $(m+t)\times m$ matrix $\mab'$ whose nullspace encodes $\Sel_{\hat\phi}(\eabh)/\langle [B]\rangle$. The entries of $\mab'$ are determined by cubic residue symbols in terms of the primes that index the rows and columns.
	
	For fixed $m$ and $t$, we conjecture that the matrices $\mab'$ are equidistributed in $\M_{(m+t)\times m}(\F_3)$ as the height of $\eab$ goes to infinity (see \Cref{conj:componentwise}). This conjecture is more refined than \Cref{conj:intro}, which records only the resulting distribution of nullities. Since $m=\omega(B)-1$ tends to infinity with the height of a typical curve, a sufficiently uniform form of this fixed $m$ conjecture would imply \Cref{conj:intro} on the subfamily where $\mab'$ is defined. We explain in \Cref{sec:conjectures} why we expect the same limiting distribution for the whole of $\E_t$.
	
	Our experiments test this conjecture directly: for fixed $m$ and $t$, we compute $\mab'$ for large collections of curves, and compare the frequency of each matrix with the uniform distribution. Thus, our experiments test the underlying principle that, once the visible arithmetic constraints have been accounted for, the remaining variation is governed by random linear algebra.
	
	\Cref{conj:intro} has several natural consequences. First, via \eqref{eq:tam}, \Cref{conj:intro} also determines the distribution of $\Selp$: for every integer $d \ge \max(0, t)$
	\[\lim_{X\to\infty}\frac{\#\set{E\in \E_t(X) :\dim\Selp = d}}{\#\E_t(X)} = \P_t(d-t).\]
	
	The probabilities $\P_t(d)$ are well-studied and admit explicit formulae \cite{fulman-goldstein}*{Sec.~3}. The corresponding first-moment prediction is particularly simple:
	
	\begin{corollary}\label{thm:avg-intro}
		Assume that the convergence in \Cref{conj:intro} is sufficiently uniform to pass to first moments. Then 
		\[\lim_{X\to \infty}\br{\avg_{E\in\E_t(X)}\#\Selp} = 1 + 3^t.\]
	\end{corollary}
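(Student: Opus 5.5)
The plan is to turn the first moment of $\#\Selp$ into a first moment of $3^{\text{nullity}}$ for a random matrix, and then compute that moment by counting kernel vectors.

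\textbf{Step 1 (reduction to the random-matrix law).} For $E\in\E_t$, write $\dim\dSelp = d+1$. Then \eqref{eq:tam} gives $\dim\Selp = d+t$, so $\#\Selp = 3^t\cdot 3^{d}$. \Cref{conj:intro} says that $d$ has limiting law $\P_t$. The hypothesis that the convergence is uniform enough to pass to first moments gives
\[\lim_{X\to\infty}\avg_{E\in\E_t(X)}\#\Selp \;=\; 3^t\sum_{d\ge \max(0,-t)} 3^d\,\P_t(d).\]
So it suffices to show that $\sum_d 3^d\P_t(d) = 1+3^{-t}$.

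\textbf{Step 2 (finite-$m$ moment).} Let $M$ be uniform in $\M_{(m+t)\times m}(\F_3)$ with $m+t\ge 0$. Then $3^{\nullity M} = \#\ker M = \sum_{v\in\F_3^m}\mathbf 1[Mv=0]$. For $v\neq 0$, the vector $Mv$ is uniform in $\F_3^{m+t}$, so $\Pr(Mv=0)=3^{-(m+t)}$. Hence
\[\mathbb E\,3^{\nullity M} = 1+(3^m-1)3^{-m-t}\;\longrightarrow\;1+3^{-t}\qquad(m\to\infty).\]

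\textbf{Step 3 (interchanging the limit in $m$ with the sum over $d$).} $\P_t(d)$ is defined as the limit in $m$ of $\Pr(\nullity M=d)$, so we still need
\[\lim_m \mathbb E\,3^{\nullity M}=\sum_d 3^d\P_t(d).\]
This holds by uniform integrability of $3^{\nullity M}$, which follows from a uniform bound on its second moment:
\[\mathbb E\,(\#\ker M)^2=\sum_{v,w}\Pr(Mv=Mw=0).\]
Split this sum according to the span of $\{v,w\}$:
\begin{itemize}
\item pairs with $v=w=0$ contribute $1$;
\item pairs spanning a line contribute $O(3^m\cdot 3^{-m-t})$;
\item pairs spanning a plane contribute $O(3^{2m}\cdot 3^{-2(m+t)})$.
\end{itemize}
All three are bounded independently of $m$. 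Alternatively, one can substitute the explicit formula for $\P_t(d)$ from \cite{fulman-goldstein} and sum directly, but the moment argument is cleaner.

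Combining Steps 1–3 gives the limit $3^t(1+3^{-t}) = 3^t+1$. The main obstacle is Step 3, namely justifying that the $m\to\infty$ limit commutes with taking the expectation. The second-moment bound above handles it. The other limit interchange, in $X$, is exactly what the stated uniformity hypothesis assumes.
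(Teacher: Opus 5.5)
Your proposal is correct. The paper states this corollary without proof and points only to the explicit formulae for $\P_t(d)$ in \cite{fulman-goldstein}. The implied route there is to substitute the closed form of the limiting distribution into $\sum_d 3^{d+t}\P_t(d)$ and evaluate the resulting $q$-series. That approach needs a summation identity, but it involves no interchange of limits, because the limiting law is summed directly.

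You instead compute the moment at finite $m$ by linearity of expectation, giving $\EE\,3^{\nullity M}=1+(3^m-1)3^{-m-t}$. You then justify moving $m\to\infty$ inside the sum over $d$ using the uniform bound $\EE(\#\ker M)^2\le 1+4\cdot 3^{-t}+3^{-2t}$; your line/plane split yields exactly this, with $4(3^m-1)$ pairs spanning a line. This avoids any explicit formula and makes rigorous a step the paper leaves tacit, at the cost of the extra uniform-integrability argument.

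Your bookkeeping in Step 1 is also right, and it can be shortened. Since $3^{t}\,3^{\nullity M}=3^{\nullity M^T}$ and $M^T$ is uniform of size $m\times(m+t)$, the same kernel count gives $1+(3^{m+t}-1)3^{-m}\to 1+3^t$ directly.
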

	
	Analogues of this first moment prediction have been proven for several twist families of elliptic curves and abelian varieties \cite{bkls,elk,ShnidmanWeiss} using Bhargava's geometry-of-numbers techniques. Other closely related results in the isogeny-Selmer setting include the work of Kane--Thorne \cite{KaneThorne} and Smith \cite{smith2025}. Our viewpoint is closer to that of Kane--Klagsbrun \cite{kane-klagsbrun}, and builds on the residue-symbol matrix methods of Heath-Brown and Swinnerton-Dyer \cite{heath-brown-1,Heath-Brown,swinnerton-dyer-heuristics}.

	\subsection{Our experiments}
	
	Our computations test the prediction that the matrices $\mab'$ are equidistributed. We compute the reduced matrices $M'_{A, B}$ for large collections of curves $\eab\:y^2 + Axy + By = x^3$ with good reduction at $3$, and then group the curves according to the dimensions of $M'_{A,B}$. For each fixed pair of dimensions, we compare the observed frequencies of the matrices $M'_{A,B}$ with the uniform distribution on matrices of that size.
	
	The most natural first approach is to compute $M'_{A, B}$ for all elliptic curves $\eab\in \E$ of bounded height, i.e.\ with $H(\eab) = \max(|A|^3, |B|)$ less than some fixed parameter $X$. This approach yielded matrices that were \emph{not} uniformly distributed. In hindsight, this was to be expected. The dimensions of $M'_{A, B}$ are controlled by the number of distinct prime factors of $B$ and $A^3-27B$. For example, the number of columns is exactly $\omega(B)-1$. By the Erd\H{o}s--Kac theorem, a generic integer of size roughly $X$ has about $\log\log X$ prime factors. Therefore, to naturally see matrices of moderately large dimension, one would have to take $X$ to be astronomically large. For example, for a generic $M'_{A, B}$ to have just $4$ columns, one would already have to take $X$ on the order of $10^{64}$. At computationally accessible heights, matrices of moderate size therefore arise disproportionately from values of $B$ with many small prime factors. Since these small primes occur in the cubic residue symbols defining $\mab'$, there is no reason for the resulting matrices to be uniformly distributed. Such values of $B$ should not affect the limiting distribution: the set of integers $B<X$ that are supported on primes $p< X^{o(1)}$ has density $0$. Similarly slow convergence also appears in the work of Kane--Klagsbrun on $2$-isogeny Selmer groups in quadratic twist families: they prove a distribution analogous to that of \Cref{conj:intro}, but with error terms that decay only as powers of $\log\log X$ \cite{kane-klagsbrun}. Hence, even when the limiting random-matrix model is correct, convergence may be extremely slow.
	
	We therefore use two different sampling methods in order to model the effect of choosing $X$ to be very large. 
	
	\subsubsection{Sifted height experiments}
	
	Our first experiment stays as close as possible to the usual height ordering, while removing the small primes that dominate the matrices at computationally accessible heights. We exhaust all elliptic curves $\eab$ whose heights lie in a short interval
	\[h_0 \le H(\eab)\le h_1,\]
	subject to the condition that $B(A^3-27B)$ has no prime factor below a chosen cutoff. Restricting to a short interval allows us to work at substantially larger heights without first enumerating all smaller curves, while the sieve suppresses the small-prime effects that obstruct equidistribution. This method has the advantage that it remains close to the height ordering, and gives an exhaustive search rather than a random sample.
	
	For example, in experiment H-3250 (see \Cref{tab:H3250-statistics}), we compute $\mab'$ for every $\eab$ with $H(\eab)\in[3250^3,3260^3]$ and $B(A^3-27B)$ not divisible by any prime $<1000$. The three largest non-trivial blocks contain approximately $4.4$ billion $1\times1$ matrices, $1.3$ billion $2\times1$ matrices, and $204$ million $1\times2$ matrices. Every individual matrix in these blocks occurs within $0.21\%$ of its uniform prediction, and the maximum difference between the observed and uniform probability of any given nullity is $6\cdot 10^{-4}$. On the other hand, the observed standard deviations are much larger, which shows that these counts do not yet behave like independent uniform draws. The observed distribution is nevertheless consistent with uniformity at a slow rate of convergence.
	
	These experiments also give evidence for the role of small primes in affecting the distribution. In four different experiments, we compute $\mab'$ for all $\eab$ with $H(\eab)\in [1000^3,1010^3]$, but with four different prime cutoffs (see \Cref{tab:height-sieve-comparison}). The experiments show that as the smallest prime divisor $p_{\min}$ of $B(A^3-27B)$ increases through the ranges $[100,300)$, $[300,500)$, and $[500,800)$, there is a clear, rapid improvement in the uniformity of $\mab'$. This improvement doesn't persist at the same rate as $p_{\min}$ increases beyond these ranges, which suggests that it is small primes that have the largest influence. As $H(\eab)\to\infty$, the influence of these small primes will become more and more diluted.
	
	The main limitation of the sifted height method is that the sieve makes large matrices rare. To probe matrix spaces of substantially higher dimension, we therefore use a second experiment in which the number of prime factors of $B$ is fixed in advance.
	
	\subsubsection{Prime factor experiments}
	
	Fix integers $N$ and $n$. We choose $n$ distinct primes $p_1,\ldots,p_n$ from the first $N$ primes greater than $3$, choose their exponents $e_i$ according to the distribution of the positive $p_i$-adic valuation of a random integer, and set
	\[B=\prod_{i=1}^n p_i^{e_i}.\]
	We then choose $A$ randomly on the natural scale $|A|\asymp B^{1/3}$ such that $3\nmid A^3-27B$. In this way, the number of columns of $\mab'$ is fixed to be $n-1$, while the primes governing its cubic residue symbols are allowed to vary through a large range. We discard matrices with more than $12$ entries, since beyond this point there are too many possible matrices for our sample sizes to meaningfully test their individual frequencies.
	
	This sampling procedure is modelled on the ordering introduced by Swinnerton-Dyer in his study of $2$-Selmer groups in quadratic twist families \cite{swinnerton-dyer-heuristics}. Rather than ordering twists by size, Swinnerton-Dyer fixes the number of prime factors of the twisting parameter and lets those primes vary. Kane later showed how to pass from such a fixed-prime-factor ordering to the natural ordering by height, and Kane--Klagsbrun use a closely related strategy for isogeny Selmer groups \cite{Kane,kane-klagsbrun}. Our factor experiment is designed to test the corresponding fixed-factor prediction in the present cubic setting.
	
	The resulting agreement with the uniform matrix model is striking. In experiment F10k-7, for example, we take $n=7$ and choose the prime factors of $B$ from a pool of $10{,}000$ primes (see \Cref{tab:F10k7-statistics}). The experiment produced $52{,}656{,}296$ $2\times 6$ matrices, distributed among all $3^{12} = 531{,}441$ possible matrices. The data is extremely close to what one would expect from independent uniform draws: every matrix occurs; the observed standard deviation is $9.97$ compared to the uniform standard deviation of $9.95$; the maximum difference between the observed and uniform probability of any given nullity is $2.283\cdot 10^{-5}$; and the discrepancies between the observed counts and the mean, while large, are consistent with uniform draws over $3^{12}$ matrices. The same behaviour persists across the other factor experiments, and uniformity improves as the size of the prime pool increases.
	
	We give a detailed description of both experiments and an analysis of the results in \Cref{sec:results}. We give additional tables in \Cref{appendix}. Our SageMath code and raw data are available at our \href{https://github.com/dcz0711/3-Selmer-Groups/}{Github repository}.

	\section{The family of elliptic curves with a $3$-torsion point}
	
	\subsection{Elliptic curves with a $3$-torsion point}\label{sec:elliptic curve}
	
	Let $\E$ denote the set of elliptic curves over $\Q$ with a rational $3$-torsion point. Then, after a change of variables, every $E\in\E$ is isomorphic to a curve
	\[\eab\: y^2 + Axy + By = x^3\]
	for some integers $A, B$, such that the discriminant $\Delta_{A, B} = B^3(A^3 - 27B)$ is non-zero.
	Since $E_{pA, p^3B}$ and $\eab$ are isomorphic, we normalise our curves so that $B>0$ and whenever a prime $p$ divides $A$, $p^3\nmid B$. 
	
	\begin{definition}\label{def:height}
		We define the \emph{height} of $\eab$ to be $\max(|A|^3, |B|)$.
	\end{definition}
	
	The elliptic curve $\eab$ has a $3$-torsion point $(0,0)$. Quotienting out by this point gives a $3$-isogeny
	\[\phi\: \eab\to \eabh,\]
	where
	\[\eabh \: Y^2 = X^3 - 3\br{\frac A2X - \frac1{18}(A^3 - 27B)}^2.\]

	The rational map $\phi$ is given, away from its kernel, by
	\[
	\phi(x,y)=\left(
	\frac{x^3+\frac{A^2}{3}x^2+ABx+B^2}{x^2},
	\frac{(2y+Ax+B)(x^3-ABx-2B^2)}{2x^3}
	\right),
	\]
	and $\phi$ sends $\mathcal O$, $(0,0)$, and $(0,-B)$ to $\widehat{\mathcal O}$.
	For the dual map, set
	\[
	C=\frac{A^3-27B}{9}.
	\]
	If $(X,Y)\in \eabh$ and $X\neq 0$, define
	\[
	x_0=\frac{X^3-A^2X^2+3ACX-3C^2}{9X^2},
	\qquad
	Y_0=\frac{Y(X^3-3ACX+6C^2)}{27X^3}.
	\]
	Then
	\[
	\widehat\phi(X,Y)=\left(x_0,\,Y_0-\frac{Ax_0+B}{2}\right),
	\]
	with the points in $\ker(\widehat\phi)$ sent to $\mathcal O$.
	
	The elliptic curve $\eab$ has discriminant $B^3(A^3 - 27B)$. 
	
	\begin{lemma}
		\label{lemma:good_reduction_at_3}
		The elliptic curve $\eab$ has good reduction at the prime $p=3$ if and only if $3 \nmid AB$.
	\end{lemma}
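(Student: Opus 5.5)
The plan is to use the standard criterion: an integral Weierstrass model has good reduction at $p$ exactly when it is minimal at $p$ and $p\nmid\Delta$. A model is non-minimal at $p$ only if $v_p(c_4)\ge 4$, $v_p(c_6)\ge 6$ and $v_p(\Delta)\ge 12$ all hold. So it suffices to show that the normalised model $\eab$ is always minimal at $3$, and then read off good reduction from $\Delta_{A,B}=B^3(A^3-27B)$.

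First I compute the invariants of $y^2+a_1xy+a_3y=x^3$ with $a_1=A$, $a_3=B$. We have $b_2=A^2$, $b_4=AB$ and $b_6=B^2$. Hence
\[c_4=A(A^3-24B),\qquad c_6=-A^6+36A^3B-216B^2.\]

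For the direction ``$\Leftarrow$'': if $3\nmid AB$, then $A^3-27B\equiv A^3\not\equiv 0\pmod 3$. So $3\nmid\Delta_{A,B}$ and the integral model has good reduction at $3$.

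For ``$\Rightarrow$'' I assume $3\mid AB$. Then $3\mid\Delta_{A,B}$: either $3\mid B$, or $3\mid A$, in which case $27\mid A^3-27B$. It remains to prove minimality, which I split into cases.

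\textbf{Case $3\nmid A$, $3\mid B$.} Here $v_3(c_4)=v_3(A^4-24AB)=0$, so the model is minimal. The reduction is in fact multiplicative.

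\textbf{Case $3\mid A$, $3\nmid B$.} The terms of $c_6$ have valuations $v_3(A^6)\ge 6$, $v_3(36A^3B)\ge 5$ and $v_3(216B^2)=3$. So $v_3(c_6)=3<6$ and the model is minimal.

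\textbf{Case $3\mid A$, $3\mid B$.} The normalisation (no $p^3\mid B$ when $p\mid A$) forces $v_3(B)\in\{1,2\}$. If $v_3(B)=1$, the same term comparison gives $v_3(c_6)=v_3(216B^2)=5<6$. If $v_3(B)=2$, I instead bound the discriminant: $v_3(\Delta)=6+v_3(A^3-27B)$, and $v_3(27B)=5$. If $v_3(A)=1$, then $v_3(A^3)=3$, so $v_3(A^3-27B)=3$ and $v_3(\Delta)=9$. If $v_3(A)\ge 2$, then $v_3(A^3-27B)=5$ and $v_3(\Delta)=11$. Either way $v_3(\Delta)<12$, so the model is minimal, and reduction at $3$ is bad.

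The only delicate point is this last subcase. There $c_4$ and $c_6$ can have large valuation, because cancellation between terms is possible. This is exactly where the normalisation hypothesis is needed, which is why I switch to bounding $v_3(\Delta)$ rather than $c_6$.
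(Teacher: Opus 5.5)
Your proof is correct and takes the same route as the paper: show that the normalised model is minimal at $3$, then read off the reduction type from $\Delta_{A,B}=B^3(A^3-27B)\equiv AB \pmod 3$. The difference is that the paper only asserts minimality from the normalisation, whereas your case analysis proves it: $v_3(c_4)=0$ when $3\nmid A$, $v_3(c_6)<6$ when $3\mid A$ and $v_3(B)\le 1$, and $v_3(\Delta)\in\{9,11\}$ when $v_3(B)=2$. This also covers the case $3\nmid A$, where the normalisation puts no bound on $v_3(B)$.
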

	\begin{proof}
		An elliptic curve given by a minimal Weierstrass equation has good reduction at a prime $p$ if and only if its discriminant is not divisible by $p$. Our assumption that $27\nmid B$ if $3\mid A$ ensures that the equation is minimal at $3$. Note that 
		\(B^3(A^3-27B) \equiv (AB)^3 \pmod{3}\) and by Fermat's Little Theorem, $(AB)^3 \equiv AB \pmod{3}$. Thus $\eab$ has good reduction at $3$ if and only if $3\nmid AB$.
	\end{proof}
	
	Depending on $A$ and $B$, the elliptic curve $\eab$ may admit a second $3$-isogeny, or $\phi$ could extend to a $9$-isogeny. Analogously to the cases considered in \cite{kane-klagsbrun,smith2025} for $2$-isogenies, we expect that the statistics of these curves will be different to those of a generic curve $E\in \E$. The following lemma shows that elliptic curves $E\in\E$ with additional isogenies are both rare and easy to detect.
	
	\begin{lemma}\label{lem:extra-isogenies}
		The following are equivalent:
		\begin{enumerate}[leftmargin=*]
			\item The $3$-power isogeny class of $\eab$ consists of exactly $\eab$ and $\eabh$.
			\item Neither $B$ nor $A^3 -27B$ is a rational cube.
		\end{enumerate}
	\end{lemma}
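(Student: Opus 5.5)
The plan is to translate "the $3$-power isogeny class is strictly larger than $\{\eab,\eabh\}$" into the existence of an extra Galois-stable line in $\eab[3]$ or $\eabh[3]$. I will then detect each possibility by an explicit cube condition: one via the $3$-division polynomial of $\eab$, the other via the Kummer image of $(0,0)$.

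\emph{Step 1 (reduction to stable lines).} $3$-power isogenies factor into cyclic $3$-isogenies, and the graph of curves in a $3$-power isogeny class connected by $3$-isogenies is connected. Hence the class is exactly $\{\eab,\eabh\}$ if and only if two conditions hold. First, $\eab$ has no rational subgroup of order $3$ other than $\ker\phi=\langle(0,0)\rangle$. Second, $\eabh$ has no rational subgroup of order $3$ other than $\ker\widehat\phi$. Indeed, any further $3$-isogeny out of $\eab$ or $\eabh$ reaches a new vertex. Conversely, if neither curve has an extra edge, then the connected graph contains only these two vertices.

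\emph{Step 2 (extra line in $\eab[3]$).} The $x$-coordinates of the nontrivial $3$-torsion points are the roots of the $3$-division polynomial. With $a_1=A$, $a_3=B$ we get $b_2=A^2$, $b_4=AB$, $b_6=B^2$ and $b_8=0$, so
\[
\psi_3 = x\,(3x^3+A^2x^2+3ABx+3B^2).
\]
The factor $x$ corresponds to $\pm(0,0)$. A line $\{\pm P\}$ is Galois-stable if and only if $x(P)$ is rational. So there is a second stable line if and only if the cubic $f(x)=3x^3+A^2x^2+3ABx+3B^2$ has a rational root. The key identity is
\[
(Ax+3B)^3-9B\,f(x)=(A^3-27B)\,x^3,
\]
which is checked by expanding. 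Since $f(0)=3B^2\neq 0$, every root satisfies $x\ne0$. The identity then shows $f(x)=0$ if and only if $\bigl(A+3B/x\bigr)^3=A^3-27B$. Thus a rational root forces $A^3-27B$ to be a cube. Conversely, if $A^3-27B=c^3$ with $c\in\Q$, then $c\neq A$ because $B\ne0$, and $x=3B/(c-A)$ is a rational root. So $\eab$ has a second $3$-isogeny if and only if $A^3-27B$ is a rational cube.

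\emph{Step 3 (extra line in $\eabh[3]$).} Consider the exact sequence $0\to\eabh[\widehat\phi]\to\eabh[3]\xrightarrow{\widehat\phi}\eab[\phi]\to0$, where $\eab[\phi]=\langle(0,0)\rangle\cong\Z/3$ and $\eabh[\widehat\phi]\cong\mu_3$. A Galois-stable line in $\eabh[3]$ other than $\ker\widehat\phi$ is a splitting of this sequence. That is the same as a Galois-fixed point $R$ with $\widehat\phi(R)=(0,0)$. For any lift $R$, the cocycle $\sigma\mapsto\sigma R-R\in\eabh[\widehat\phi]\cong\mu_3$ is exactly the image of $(0,0)$ under the connecting (Kummer) map $\eab(\Q)\to H^1(\Q,\eabh[\widehat\phi])\simeq\qmc$. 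As noted in the introduction, this image is $[B]$. The sequence splits if and only if the class is trivial, i.e.\ if and only if $B$ is a rational cube.

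Combining Steps 1--3 gives the equivalence. The main obstacle is Step 3: one must justify that the Kummer image of $(0,0)$ really is $[B]$ under the normalisation used. I would verify this directly from the explicit formula for $\widehat\phi$ by checking that the relevant cube-root extension is $\Q(\sqrt[3]{B})$. As a fallback, one could run the Step 2 computation on the model of $\eabh$ and reduce the cubic condition to $B$ being a cube.
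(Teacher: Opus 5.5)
Your argument is correct, and its case split matches the paper's. An extra curve comes either from a second rational $3$-subgroup of $\eab$, or from a rational $3$-subgroup of $\eabh$ other than $\ker\widehat\phi$; as you note, the latter is the same as a rational $3$-torsion point on $\eabh$ lifting $(0,0)$. The difference lies in how each case is detected.

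The paper twists by $-3$ to turn the first case into a $3$-torsion question on $(\eab)_{-3}$. It then writes both $(\eab)_{-3}$ and $\eabh$ in the form $y^2=x^3+D(ax+b)^2$ and quotes Cohen--Pazuki's Lemma~2.1 twice.

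You settle the first case by a self-contained division-polynomial computation. The identity $(Ax+3B)^3-9Bf(x)=(A^3-27B)x^3$ is correct, and it even gives the root $x=3B/(c-A)$ explicitly. You settle the second case by identifying the extension class of $0\to\eabh[\widehat\phi]\to\eabh[3]\to\eab[\phi]\to0$ with the Kummer image of $(0,0)$. This is more conceptual than the paper's citation: it shows that the condition ``$B$ is a cube'' is exactly the vanishing of the torsion class $[B]$, which later serves as the known kernel vector of $M_{A,B}$ (\Cref{guaranteed_point_in_selmer}).

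What you call the main obstacle is already available. The formula $\widehat\delta((0,0))=[B]^{-1}$ is recorded right after \Cref{where_selmer_groups_live}; it comes from $y$ having divisor $3(0,0)-3\mathcal O$, evaluated at $-(0,0)=(0,-B)$. That material does not depend on this lemma, so citing it is not circular, and $[B]$ versus $[B]^{-1}$ is irrelevant for triviality.

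The one step that deserves an extra sentence is your claim that every extra $3$-isogeny reaches a new curve. It follows from $\operatorname{End}_\Q(E)=\Z$. There is no rational endomorphism of degree $3$. A $3$-isogeny $\psi\colon\eab\to\eabh$ with $\ker\psi\neq\ker\phi$ would give $\widehat\phi\circ\psi=\pm[3]=\pm\,\widehat\phi\circ\phi$, forcing $\psi=\pm\phi$; the same argument works starting from $\eabh$. This step requires counting curves up to $\Q$-isomorphism. The paper leaves this direction implicit too.
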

	
	\begin{proof}
		The $3$-power isogeny class of $\eab$ consists of additional curves only if $\eab$ admits a second $3$-isogeny, or if $\phi$ extends to a $9$-isogeny.
		
		In the former case, the quadratic twist $(\eab)_{-3}$ must have a $3$-torsion point. The elliptic curve $(\eab)_{-3}$ is isomorphic to $y^2 = x^3 -3\br{\frac A2 x -\frac {3B}2}^2$. By \cite{Cohen_2009}*{Lem.~2.1}, it follows that $(\eab)_{-3}$ has a $3$-torsion point if and only if $A^3-27B$ is a cube.
		
		In the latter case, $\eabh$ must have a $3$-torsion point. But by \cite{Cohen_2009}*{Lem.~2.1}, $\eabh\:Y^2 = X^3-3\br{\frac A2 X - \frac1{18}(A^3 - 27 B)}^2$ has a rational $3$-torsion point if and only if $27B$ is a cube.
	\end{proof}
	
	The exceptional curves are thin in the full family $\E$. Indeed, $\#\E(X)\asymp X^{4/3}$, while there are $O(X^{2/3})$ pairs $(A, B)$ with $\max{|A|^3, B}< X$ and either $B$ or $A^3 -27B$ a cube. Thus, we expect that these families will not affect the distribution in \Cref{conj:intro}. We remove these families from our matrix experiments, since their additional isogenies introduce further structure.
	
	\subsection{The isogeny Selmer group}
	
	Let $\phi\: E\to E'$ be an isogeny of elliptic curves over $\Q$, and let $G_{\Q} = \Ga\Q$. For any place $v$ of $\Q$, let $\Q_v$ be the corresponding local field. The short exact sequence of $G_{\Q}$-modules 
	\[0 \to E[\phi] \to E \xrightarrow{\phi} E' \to 0\]
	induces a local Kummer map for every place $v$:
	\[
	\delta_v: E'(\Q_v) / \phi(E(\Q_v)) \hookrightarrow H^1(\Q_v, E[\phi]).
	\]
	Similarly, for the dual isogeny $\phih$, the short exact sequence
	\[0\to E'[\phih]\to E'\xrightarrow{\phih}E\to 0\]
	induces a local Kummer map
	\[
	\widehat{\delta}_v: E(\Q_v) / \phih(E'(\Q_v)) \hookrightarrow H^1(\Q_v, E'[\phih]).
	\]
	
	\begin{definition}
		The $\phi$-Selmer group, $\sel_{\phi}(E)$, is defined as the subgroup of classes in the global cohomology group $H^1(\Q, E[\phi])$ that locally restrict to the image of the Kummer map at every place $v$. Formally, 
		\[\Selp = \left\{ \xi \in H^1(\Q, E[\phi]) \;\middle|\; \text{res}_v(\xi) \in \text{Im}(\delta_v) \text{ for all } v \right\}. \]
	\end{definition}
	Similarly, the $\phih$-Selmer group, $\Sel_{\phih}(E')$, is defined as the subgroup of classes in $H^1(\Q, E'[\phih])$ restricting to the local Kummer images:
	\[\dSelpd = \left\{ \xi \in H^1(\Q, E'[\phih]) \;\middle|\; \text{res}_v(\xi) \in \text{Im}(\hat{\delta}_v) \text{ for all } v \right\}.\]
	
	We now consider the case that $E = \eab$ and $\phi$ is the $3$-isogeny $\phi\:\eab\to\eabh$ from the previous section.
	
	\begin{lemma}
		\label{where_selmer_groups_live}
		The $\phih$-Selmer group $\sel_{\phih}(\eabh)$ embeds into $\msc\Q$.
	\end{lemma}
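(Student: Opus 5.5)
The plan is to identify $H^1(\Q,\eabh[\phih])$ with $H^1(\Q,\mu_3)$ and then apply Kummer theory. Since $\dSelpd$ is by definition a subgroup of $H^1(\Q,\eabh[\phih])$, it is enough to produce an isomorphism of $G_\Q$-modules $\eabh[\phih]\simeq\mu_3$. Hilbert 90 then gives $H^1(\Q,\mu_3)\simeq \Q^\times/\Q^{\times 3}$, and the composite is the desired embedding. The notation $\msc\Q$ denotes exactly this group $\Q^\times/\Q^{\times3}$.

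\textbf{Step 1: $\eab[\phi]\simeq \Z/3\Z$ as Galois modules.} The kernel $\eab[\phi]$ is generated by the rational point $(0,0)$. It is therefore the trivial Galois module $\Z/3\Z$.

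\textbf{Step 2: the Weil pairing.} The Weil pairing restricts to a perfect $G_\Q$-equivariant pairing
\[
e_\phi\colon \eab[\phi]\times \eabh[\phih]\to \mu_3 .
\]
Combined with Step 1, this gives
\[
\eabh[\phih]\simeq \Hom(\Z/3\Z,\mu_3)\simeq \mu_3 .
\]
One can also see this directly from the explicit formula for $\phih$. Its kernel consists of $\widehat{\mathcal O}$ and the two points with $X=0$, namely $(0,\pm Y_1)$, where
\[
Y_1^2=-3\cdot\tfrac{1}{18^2}(A^3-27B)^2 .
\]
So $Y_1\in\sqrt{-3}\,\Q^\times$, and complex conjugation swaps the two points. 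The Galois action on these points therefore factors through $\Gal(\Q(\sqrt{-3})/\Q)$ and acts by inversion, which is exactly the action on $\mu_3$.

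\textbf{Step 3: Kummer theory.} From $1\to\mu_3\to\overline{\Q}^\times\xrightarrow{3}\overline{\Q}^\times\to1$ and Hilbert 90 we get
\[
H^1(\Q,\mu_3)\simeq \Q^\times/\Q^{\times3}.
\]
Composing,
\[
\dSelpd\hookrightarrow H^1(\Q,\eabh[\phih])\simeq H^1(\Q,\mu_3)\simeq \msc\Q .
\]

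The only point needing care is Step 2. One must check that the Galois module structure is genuinely $\mu_3$ and not trivial, i.e.\ that the nonzero kernel points are not rational. The explicit computation settles this: since $A^3-27B\neq0$, the coordinate $Y_1$ lies in $\sqrt{-3}\,\Q^\times$ and so is irrational.
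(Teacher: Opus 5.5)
Your proof is correct and follows the same route as the paper. In both, $\eab[\phi]\simeq\Z/3\Z$ is the trivial Galois module because $(0,0)$ is rational, the Weil pairing then forces $\eabh[\phih]\simeq\mu_3$, and Hilbert 90 gives $H^1(\Q,\mu_3)\simeq\Q^\times/\Q^{\times 3}$; your extra computation that the kernel points $(0,\pm Y_1)$ lie over $\Q(\sqrt{-3})$ and are swapped by conjugation is a valid but redundant confirmation of the $\mu_3$ structure.
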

	\begin{proof}
		By definition, the kernel of the isogeny $\phi\: \eab \to \eabh$ is $E[\phi] = \{\mathcal{O}, P, -P\}$, where $P = (0,0)$. Because the points in $E[\phi]$ are defined over $\Q$, the Galois group $G_{\Q}$ acts trivially on $E[\phi]$. Hence, as a $G_{\Q}$-module, $E[\phi] \cong \Z/3\Z$. The dual isogeny $\phih\: E' \to E$ has kernel $E'[\phih]$. The Weil pairing provides a perfect $G_\Q$-equivariant pairing
		\[e_{\phi} \: E[\phi] \times E'[\phih] \to \mu_3,\]
		where $\mu_3$ is the group of third roots of unity. Because $G_{\Q}$ acts trivially on the first argument $E[\phi]$, we must have $E'[\phih] \cong \mu_3$. 
		By definition, the $\phih$-Selmer group $\Sel_{\phih}(E'/\Q)$ is the subgroup of the global cohomology group $H^1(\Q, E'[\phih])$ consisting of classes that satisfy local solubility conditions at all places. Since $E'[\phih] \cong \mu_3$, the Selmer group sits inside $H^1(\Q, \mu_3)$. To identify $H^1(\Q, \mu_3)$ explicitly, consider the short exact sequence of $G_{\Q}$-modules:
		\[1 \to \mu_3 \to\Qb\t\xrightarrow{x\mapsto x^3}\Qb\t\to 1\]
		Taking Galois cohomology yields the associated long exact sequence:
		$$ \Q^\times \xrightarrow{x \mapsto x^3} \Q^\times \xrightarrow{\delta} H^1(\Q, \mu_3) \longrightarrow H^1(\Q, \overline{\Q}^\times). $$
		By Hilbert's Theorem 90, the cohomology group $H^1(\Q, \overline{\Q}^\times)$ is trivial. Therefore, the connecting homomorphism $\delta$ is surjective, and its kernel is precisely the image of the cubing map, yielding an isomorphism
		\[H^1(\Q, \mu_3) \cong \Q^\times / (\Q^\times)^3.\]
		This completes the proof.
	\end{proof}
	
	Thus, for every $A,B$, we may view the $\phih$-Selmer group as a subgroup of $\qmc$. The same argument identifies 
	\[H^1(\Q_v, \eabh[\phih])\cong \msc{\Q_v}.\]
	With this identification, we can define the local Kummer map $\widehat\delta_v\:\eab(\Q_v)\to \msc{\Q_v}$ explicitly: for any point $Q = (x, y)\in \eab(\Q_v)$, we have
	\[\widehat\delta_v(Q) = \begin{cases}
		1 \pmod{\Q_v^{\times 3}} & \text{if } Q = \mathcal{O} \\
		B\ii \pmod{\Q_v^{\times 3}} & \text{if } Q = (0,0)\\
		y \pmod{\Q_v^{\times 3}} & \text{if } \text{otherwise}
	\end{cases}\]
	Similarly, the global Kummer map
	\[\widehat\delta\:\eab(\Q)\to \qmc\]
	is given by 
	\[
	\widehat\delta(Q) = \begin{cases} 
		1 \pmod{\Q^{\times 3}} & \text{if } Q = \mathcal{O} \\
		B\ii \pmod{\Q^{\times 3}} & \text{if } Q = (0,0)\\
		y \pmod{\Q^{\times 3}} & \text{if } \text{otherwise}
	\end{cases} 
	\]
	
	\begin{lemma}
		\label{guaranteed_point_in_selmer}
		The class of $B$ is guaranteed to be in $\Sel_{\hat\phi}(\eabh)$.
	\end{lemma}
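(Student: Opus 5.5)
The plan is to realise the class of $B$ as the image of a global rational point under the global Kummer map $\widehat\delta$, and then use the general fact that global points land in the Selmer group. The point $(0,0)\in\eab(\Q)$ is rational, so by the explicit description above, $\widehat\delta((0,0)) = B^{-1}\pmod{\Q^{\times3}}$. The class of $B$ is the inverse of this class in $\qmc$, and it is also $B^{-1}\cdot B^{3}/B^{3}$ up to cubes; equivalently, it is the image of the rational point $-(0,0)=(0,-B)$. Since $\Sel_{\hat\phi}(\eabh)$ is a subgroup, it suffices to show that $\widehat\delta((0,0))$ lies in it.

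The key step is compatibility of the global and local Kummer maps. The connecting homomorphism for
\[0\to \eabh[\phih]\to \eabh\xrightarrow{\phih}\eab\to 0\]
is functorial with respect to restriction $G_{\Q_v}\subset G_\Q$. Hence for $Q\in\eab(\Q)$ we have $\mathrm{res}_v(\widehat\delta(Q)) = \widehat\delta_v(Q)$, where on the right $Q$ is viewed in $\eab(\Q_v)$. By definition $\widehat\delta_v(Q)$ lies in $\mathrm{Im}(\widehat\delta_v)$, so $\widehat\delta(Q)$ satisfies every local condition and therefore lies in $\Sel_{\hat\phi}(\eabh)$. Concretely, under the identifications $H^1(\Q,\mu_3)\cong\qmc$ and $H^1(\Q_v,\mu_3)\cong\msc{\Q_v}$, restriction is just the natural map $\qmc\to\msc{\Q_v}$. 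The explicit formulas show that both maps send $(0,0)$ to $B^{-1}$, so the compatibility can be seen directly.

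The only point needing care is that the explicit formula $\widehat\delta(0,0)=B^{-1}$ is stated using the chosen identification $\eabh[\phih]\cong\mu_3$, and this identification must be the same globally and locally. It is, because both come from the same Weil pairing and the same Kummer sequence base-changed to $\Q_v$. With that checked, applying the subgroup property to $\widehat\delta((0,0))^{-1}$ gives $[B]\in\Sel_{\hat\phi}(\eabh)$. I expect no real obstacle.
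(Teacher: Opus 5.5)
Your proposal is correct and follows the paper's argument. The paper's proof is only the observation that $[B]=\widehat\delta(-(0,0))$ with $-(0,0)=(0,-B)$; you additionally spell out the standard fact, left implicit there, that global Kummer images satisfy every local condition because restriction commutes with the connecting maps. One small slip: the aside that $[B]$ is ``$B^{-1}\cdot B^{3}/B^{3}$ up to cubes'' is wrong as written, since that expression equals $B^{-1}$, but nothing in your argument depends on it.
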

	\begin{proof}
		Let $Q = (0,0)$. Then $B = \widehat\delta(-Q)$.
	\end{proof}
	
	In particular, under our running assumption that $B$ is not a cube, we always have $\dim\Sel_{\phih}\eabh \ge 1$.
	
	\subsection{The global Selmer ratio}
	
	Let $\phi\:E\to E'$ be an isogeny of elliptic curves over $\Q$.
	
	\begin{definition}\label{selmer-ratio}
		For each place $v$, the \emph{local Selmer ratio} is
		\[c_v(\phi, E) = \frac{\#\coker(E(\Q_v)\xrightarrow{\phi}E'(\Q_v))}{\#\ker(E(\Q_v)\xrightarrow{\phi}E'(\Q_v))}.\]
		The global Selmer ratio is
		\[c(\phi, E) = \prod_{v}c_v(\phi, E).\]
	\end{definition}

	Note that if $E$ has good reduction at $v$ and if $v$ does not divide $\infty$ or the degree of $\phi$, then $c_v(\phi, E) = 1$. Thus, the product defining $c(\phi, E)$ is finite. If $\phi$ is a $p$-isogeny, then $c(\phi, E)$ is a power of $p$.
	
	The relation between the Selmer ratio and Selmer groups goes back to Cassels \cite{CasselsVIII}. We use it in the form of the Greenberg Wiles  theorem (see \cite{NeukirchCohomology}*{8.7.9}):
	
	\begin{theorem}[Greenberg--Wiles]\label{thm:greenberg-wiles}
		We have
		\[c(\phi, E) = \frac{\#\Selp}{\#\dSelp}\cdot\frac{\#E'[\widehat\phi](\Q)}{\#E[\phi](\Q)}.\]
	\end{theorem}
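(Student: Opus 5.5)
The plan is to derive the formula from Poitou--Tate global duality combined with a comparison of Euler characteristics, along the lines of \cite{NeukirchCohomology}*{8.7.9}.

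Let $S$ be a finite set of places containing $\infty$, the primes dividing $\deg\phi$, and the primes of bad reduction of $E$. For $v\notin S$, the local Kummer image is the unramified subgroup. Hence both $\Selp$ and $\dSelp$ are Selmer groups attached to the finite Galois modules $M=E[\phi]$ and $M^\vee=E'[\widehat\phi]$, with local conditions $\mathcal L_v=\operatorname{Im}\delta_v$ and $\mathcal L_v^\vee=\operatorname{Im}\widehat\delta_v$. The Weil pairing $e_\phi$ identifies $M^\vee$ with the Cartier dual $\operatorname{Hom}(M,\mu_{\deg\phi})$. Under this identification, $\mathcal L_v$ and $\mathcal L_v^\vee$ are exact annihilators under the local Tate pairing. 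This is the standard fact that the images of the Kummer maps for $\phi$ and $\widehat\phi$ are mutual orthogonal complements, which follows from Tate local duality for $E$ together with the compatibility of the Weil pairings.

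With this in place, Greenberg--Wiles gives
\[
\frac{\#\Selp}{\#\dSelp}=\frac{\#H^0(\Q,M)}{\#H^0(\Q,M^\vee)}\prod_{v}\frac{\#\mathcal L_v}{\#H^0(\Q_v,M)}.
\]
The proof of this identity runs as follows. Use the Poitou--Tate nine-term sequence for $G_S$ to get an exact sequence
\[
0\to\Selp\to H^1(G_S,M)\to\bigoplus_{v\in S}H^1(\Q_v,M)/\mathcal L_v\to\dSelp^\vee\to\cdots.
\]
Then count orders. Tate's global Euler characteristic formula gives $\#H^1(G_S,M)$ in terms of $\#H^0$ and $\#H^2$, and the $H^2$ terms cancel against the remaining terms of the sequence. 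The local Euler characteristic formula gives $\#H^1(\Q_v,M)$. The archimedean and $\#M$-factors cancel by $\prod_v|\#M|_v=1$.

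It remains to identify each local factor $\#\mathcal L_v/\#H^0(\Q_v,M)$ with $c_v(\phi,E)$. Since $\mathcal L_v=\operatorname{Im}\delta_v\cong\coker(E(\Q_v)\to E'(\Q_v))$ and $H^0(\Q_v,M)=E[\phi](\Q_v)=\ker(E(\Q_v)\to E'(\Q_v))$, this identification is immediate. For $v\notin S$, the unramified subgroup has order equal to $\#H^0(\Q_v,M)$, so those factors are $1$, consistent with the remark after Definition~\ref{selmer-ratio}. Finally, $H^0(\Q,M)=E[\phi](\Q)$ and $H^0(\Q,M^\vee)=E'[\widehat\phi](\Q)$, and rearranging gives the statement.

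The main obstacle is the orthogonality of the local Kummer images $\mathcal L_v$ and $\mathcal L_v^\vee$. I would take it from the literature, for example \cite{NeukirchCohomology} or standard references on descent via isogeny, rather than reprove it.
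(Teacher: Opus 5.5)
Your proposal is correct and takes essentially the paper's route: the paper gives no argument of its own and simply quotes the Greenberg--Wiles formula of \cite{NeukirchCohomology}*{8.7.9}, which you derive from the Poitou--Tate sequence and the Euler characteristic formulas. You then specialise that formula to the local Kummer conditions, using the mutual orthogonality of the local Kummer images for $\phi$ and $\widehat\phi$ and the identification $\#\operatorname{Im}\delta_v/\#E[\phi](\Q_v)=c_v(\phi,E)$, which supply the translation the paper leaves implicit.
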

	
	\begin{remark}
		There are several different conventions for the Selmer ratio. In \cite{kane-klagsbrun}, the authors refer instead to the \emph{Tamagawa ratio}, which they define as 
		\[\frac{\#\Selp}{\#\dSelp}.\]
		In their specific setting, $\#E[\phi](\Q) = \#E'[\widehat\phi](\Q) = 2$, so their definition agrees with ours. Several other authors keep this definition for the general case (\cite{phillips,koymans-smith}). Our convention follows that of \cite{bkls}, which ensures that the global Selmer ratio is a product of local factors.
	\end{remark}
	
	For our isogeny, the two rational kernel sizes are
	\[\#\eab[\phi](\Q) = 3,\qquad\#\eabh[\widehat\phi](\Q)=1,\]
	so the torsion factor in \Cref{thm:greenberg-wiles} is $\frac13$. Indeed, as a Galois module, $\eabh[\widehat\phi]$ is isomorphic to $\mu_3$.
	
	When $\eab$ has good reduction at $3$, the local Selmer ratios can be read off from $B$ and $A^3-27B$.
	
	\begin{proposition}
		Assume that $3\nmid AB$. Then the local Selmer ratio $c_p(\phi, \eab)$ is given by the following table:
		\begin{center}
			\begin{TAB}(c,1cm,1cm)[5pt]{|c|c|}{|c|c|c|c|c|c|c|}
				Case & $c_p(\phi, \eab)$    \\
				$p\ne 3, p\nmid B(A^3 - 27B)$ & $1$  \\
				$p\ne 3, p\nmid B, p\mid A^3-27B$& \makecell{ $3 \quad\quad \text{ if } p \equiv 1 \pmod 3$ \\ $1\quad\quad \text{ if } p \equiv 2 \pmod 3$}\\
				$p\neq 3$, $p \mid B$, $p\nmid A^3 - 27B$& $1 / 3$\\
				$p\neq 3$, $p \mid B$, $p\mid A^3-27B$& \makecell{ $1 \quad\quad \text{ if } p \equiv 1 \pmod 3$ \\ $1/3\quad \text{ if } p \equiv 2 \pmod 3$}\\
				$p=3$& $1$\\
				$\infty$ & $1 / 3$
			\end{TAB}
		\end{center}
	\end{proposition}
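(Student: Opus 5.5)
The plan is to compute each local Selmer ratio from the standard formula (Schaefer's lemma)
\[c_v(\phi,\eab)=\frac{|\phi'(0)|_v^{-1}\,c_v(\eabh)}{c_v(\eab)}\qquad (v<\infty),\]
where $c_v(\cdot)$ are Tamagawa numbers and $\phi'(0)$ is the leading coefficient of $\phi$ on formal groups with respect to minimal models. The place $\infty$ is handled separately by a direct argument. For $p\neq 3$ the term $|\phi'(0)|_p$ is $1$, because $\phi$ has degree $3$. So at such $p$ everything reduces to comparing the Tamagawa numbers of $\eab$ and $\eabh$, which we read off from Tate's algorithm. The discriminants we need are $\Delta_{A,B}=B^3(A^3-27B)$ and the corresponding discriminant of $\eabh$, which up to units away from $3$ is $B(A^3-27B)^3$.

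\emph{The place $\infty$.} The kernel of $\eab(\R)\to\eabh(\R)$ is $\eab[\phi](\R)$, which has order $3$ since $(0,0)$ is rational. For the cokernel, note that $\eabh(\R)/\phi(\eab(\R))$ is killed by $3$ (because $\phi\widehat\phi=[3]$). On the other hand it is a quotient of $\eabh(\R)\cong S^1$ or $S^1\times\Z/2$, whose identity component is divisible. Hence the cokernel is trivial and $c_\infty=1/3$.

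\emph{The prime $3$.} Here $3\nmid AB$ gives good reduction by \Cref{lemma:good_reduction_at_3}, so both Tamagawa numbers equal $1$. The point $(0,0)$ does not reduce to $\mathcal O$, so $\eab[\phi]$ is étale at $3$. Therefore $\phi$ is an isomorphism on formal groups, $|\phi'(0)|_3=1$, and $c_3=1$.

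\emph{Primes $p\ne3$.} If $p\nmid B(A^3-27B)$, both curves have good reduction and $c_p=1$. The remaining cases are as follows.

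\begin{enumerate}[leftmargin=*]
\item \emph{$p\mid B$, $p\nmid A^3-27B$.} Then $p\nmid A$, and the reduction is $y^2+Axy=x^3$, a node with rational tangents $y=0$ and $y=-Ax$. So $\eab$ has split multiplicative reduction of type $I_{3v_p(B)}$, and $c_p(\eab)=3v_p(B)$. Since $(0,0)$ reduces to the node, the kernel of $\phi$ meets the component group nontrivially. Hence $\eabh$ has split type $I_{v_p(B)}$, consistent with the discriminant above, and $c_p(\eabh)=v_p(B)$. This gives $c_p=1/3$.
\item \emph{$p\nmid B$, $p\mid A^3-27B$.} Again $p\nmid A$, and the reduction is multiplicative of type $I_n$ with $n=v_p(A^3-27B)$ for $\eab$ and $I_{3n}$ for $\eabh$. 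I will show that the reduction is split exactly when $-3$ is a square mod $p$, i.e.\ $p\equiv1\pmod 3$, by computing the tangent slopes at the node. The node is the image of $\eab[3]\smallsetminus\eab[\phi]$, where the Galois action is through $\mu_3$ twisted appropriately. Split reduction then gives the ratio $3n/n=3$. In the nonsplit case both Tamagawa numbers lie in $\{1,2\}$ and are equal, because $n$ and $3n$ have the same parity. This gives $c_p=1$.
\item \emph{$p\mid B$ and $p\mid A^3-27B$.} Then $p\mid A$, and the normalisation $p^3\nmid B$ forces $v_p(B)\in\{1,2\}$ and additive reduction. Running Tate's algorithm should give Kodaira type $IV$ or $IV^*$ for both curves. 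The Tamagawa number is $3$ or $1$ according to whether an auxiliary quadratic, essentially $T^2+T+1$ or $T^2+3$ times a unit, splits mod $p$.
\end{enumerate}

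\emph{Main obstacle.} The third case above is where I expect the real work. Since $(0,0)$ is a rational $3$-torsion point that does not lie in the identity component, $c_p(\eab)=3$ always. For $\eabh$, the relevant quadratic has discriminant $-3$ times a square, so $c_p(\eabh)=3$ exactly when $p\equiv1\pmod3$ and $c_p(\eabh)=1$ otherwise. This yields $c_p=1$ or $1/3$, as in the table. The delicate point is tracking the change of variables in Tate's algorithm for $\eabh$ in the two subcases $v_p(B)=1,2$. As a cross-check, I would verify $c_p=\#\coker/\#\ker$ directly: the kernel has order $3$ in this case, so one only needs to decide whether the cokernel has order $3$ or $1$.
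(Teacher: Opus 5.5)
Your proposal is correct, and its skeleton matches the paper's: the same case split, with Schaefer's formula reducing every finite $p\neq 3$ to the quotient $c_p(\eabh)/c_p(\eab)$ of Tamagawa numbers. The difference is that you prove by hand what the paper imports. The paper cites external lemmas (Lem.~5.8--5.10 of its reference) for $p=3$, $p=\infty$ and the two multiplicative cases. In the additive case it runs Tate's algorithm only on $\eab$ and reads the ratio off the $IV/IV^*$ row of the Dokchitsers' table. You instead use an \'etale-kernel/formal-group argument at $3$, divisibility of $\eabh(\R)$ at $\infty$, Tate curves or tangent cones in the multiplicative cases, and Tate's algorithm on both curves in the additive case. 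Your observation that $c_p(\eab)=3$ is forced there, because $(0,0)$ is a rational point reducing to the cusp, halves the work. The paper's route is shorter and shows that the answer depends only on whether $\mu_3\subset\Q_p$. Yours is self-contained, but it needs an explicit integral model of $\eabh$ and some care at $p=2$.

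Two repairs are needed. First, the paper's model of $\eabh$ has denominators $2$ and $18$, and $p=2$ does occur in the additive case (e.g.\ $A=B=2$). So run Tate's algorithm on V\'elu's model $y^2+Axy+By=x^3-5ABx-A^3B-7B^2$, whose discriminant is exactly $B(A^3-27B)^3$. In the additive case $v_p(A^3-27B)=v_p(B)$, so both curves have $v_p(\Delta)=4v_p(B)\in\{4,8\}$. Hence both models are minimal and the types are $IV$, $IV^*$. With $b=B/p^{v_p(B)}$, the auxiliary quadratic for $\eabh$ is $T^2+bT+7b^2$ in both subcases; for $\eab$ it is $T(T+b)$.

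Second, ``$-3$ is a square mod $p$'' is the wrong criterion at $p=2$. It holds there, yet the reduction in case~(2) is nonsplit and $c_2(\eabh)=1$ in case~(3). Work with cube roots of unity instead. The substitution $T=b(3w+1)$, invertible since $p\neq 3$, gives $T^2+bT+7b^2=9b^2(w^2+w+1)$. In case~(2), put $a=A/3$; the node is at $(-a^2,a^3)$. In the coordinates $u=x+a^2$, $v=y-a^3$ the tangent cone is $v^2+3a\,uv+3a^2u^2$, and $v=au(w-1)$ turns it into $a^2u^2(w^2+w+1)$. So both split over $\F_p$ if and only if $p\equiv 1\pmod 3$, uniformly for $p\neq 3$.

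Alternatively, in case~(2) the rational point $(0,0)$ reduces to a smooth non-identity point. Hence $3$ divides $\#\tilde E_{\mathrm{ns}}(\F_p)$, which is $p-1$ or $p+1$ according as the reduction is split or nonsplit. This also settles $p=2$ and replaces your vague remark about $\eab[3]\smallsetminus\eab[\phi]$.
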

	
	\begin{proof}
		At a prime of good reduction away from $3$, the local Selmer ratio is always $1$. Cases $2$ and $3$ follow from \cite{Shnidman_2023}*{Lem.~5.10} with $d = 1$. The case $p = \infty$ is \cite{Shnidman_2023}*{Lem.~5.9} and the case $p = 3$ is \cite{Shnidman_2023}*{Lem.~5.8}.
		
		It remains to consider the case that $p\mid B$ and $p\mid A^3 -27B$. By our normalisation of $(A, B)$, we must have $v_p(B) = 1$ or $2$. Tate's algorithm \cite{Silverman2014Advanced}*{Chap.~IV} shows that $\eab$ has Kodaira type $IV$ if $v_p(B) = 1$ and type $IV^*$ if $v_p(B) = 2$. 
		
		Since $p\ne 3$, by \cite{schaefer}*{Lem.~3.8}, the local Selmer ratio is the quotient of the local Tamagawa numbers of $\eabh$ and $\eab$. The $IV$/$IV^*$ row of \cite{Dokchitser-local}*{Table 1} therefore shows that $c_p(\phi, \eab)$ is $1$ if $\mu_3\sub \Q_p$ and $\frac 13$ otherwise, which gives the fourth row.
	\end{proof}

	For a non-zero integer $n$, let $\omega(n)$ be the number of its distinct prime factors, and let $\omega_1(n)$ be the number of its distinct prime factors that are $1\pmod 3$. We deduce the following corollary.
	
	\begin{corollary}\label{prop:selmer ratio}
		Assume that $3\nmid AB$. Write $c(\phi, \eab) = 3^{t_{A,B}}$. Then 
		\[t_{A,B} = -1 - \omega(B) + \omega_1(A^3 - 27B).\]
	\end{corollary}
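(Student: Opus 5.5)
The plan is to multiply the local factors in the table of the preceding Proposition and count powers of $3$. Since $c(\phi,\eab)=\prod_v c_v(\phi,\eab)$ and every local factor is a power of $3$, we have $t_{A,B}=\sum_v \log_3 c_v(\phi,\eab)$. We therefore tally the contribution of each place, using the hypothesis $3\nmid AB$ throughout so that the table applies.

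First, the place $\infty$ contributes $-1$, and $p=3$ contributes $0$. Primes not dividing $B(A^3-27B)$ also contribute $0$. For the remaining primes we sort by whether $p\mid B$.

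Suppose $p\mid B$. Every such prime satisfies $p\neq 3$. If $p\nmid A^3-27B$, it contributes $-1$. If $p\mid A^3-27B$, it contributes $0$ when $p\equiv 1\pmod 3$ and $-1$ when $p\equiv 2\pmod 3$.

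Suppose instead $p\nmid B$ but $p\mid A^3-27B$. Here $p\neq 3$, because $3\nmid A^3-27B$ (as $A^3-27B\equiv A\not\equiv 0\pmod 3$). Such a prime contributes $+1$ exactly when $p\equiv 1\pmod 3$, and $0$ otherwise.

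Now we regroup the sum. Each prime dividing $B$ naively contributes $-1$; this gives $-\omega(B)$. We must then correct for the primes $p\mid B$ with $p\mid A^3-27B$ and $p\equiv 1\pmod 3$: these actually contribute $0$, so we add back $+1$ for each. Adding this correction to the $+1$ contributions from primes $p\nmid B$, $p\mid A^3-27B$, $p\equiv1\pmod 3$, we get a $+1$ for every prime $p\mid A^3-27B$ with $p\equiv 1\pmod 3$, regardless of whether $p\mid B$. That total is $\omega_1(A^3-27B)$. Including the $-1$ from $\infty$, we obtain $t_{A,B}=-1-\omega(B)+\omega_1(A^3-27B)$.

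The only delicate step is this bookkeeping of primes dividing both $B$ and $A^3-27B$. We must check that the table's fourth row, which gives $1$ or $1/3$ according to $p\bmod 3$, is exactly what makes the two counts $\omega(B)$ and $\omega_1(A^3-27B)$ combine additively without double counting.
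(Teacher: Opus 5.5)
Your proposal is correct and takes the same approach as the paper, which deduces the corollary directly by multiplying the local factors in the preceding table. Your regrouping of the primes dividing both $B$ and $A^3-27B$ is exactly the needed bookkeeping, as is your check that $3$ divides neither $B$ nor $A^3-27B$, so that the place $3$ affects neither $\omega(B)$ nor $\omega_1(A^3-27B)$.
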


	\begin{remark}
		It follows from \Cref{prop:selmer ratio} and \Cref{thm:greenberg-wiles} that
		\[\dim_{\F_3}\Sel_\phi(\eab) -\dim_{\F_3}\Sel_{\phih}(\eabh) = - \omega(B) + \omega_1(A^3 - 27B).\]
		
		Note that for a typical $\eab$ of height $X$, both $B$ and $A^3-27B$ will have size $\asymp X$. By the Erd\H{o}s--Kac theorem, $\omega(B)$ will be normally distributed with mean and variance $\log\log X$, while $\omega_1(A^3 -27B)$ will be normally distributed with mean and variance $\frac12\log\log X$. Assuming independence, we therefore expect  
		\[\dim_{\F_3}\Sel_\phi(\eab) -\dim_{\F_3}\Sel_{\phih}(\eabh)\]
		to have mean $-\frac12\log\log X$ and variance $\frac32\log\log X$. Thus, for $E\in\E(X)$, we expect the Selmer ratio to be biased towards the dual isogeny.
		
		This heuristic argument has been made precise by \cite{Chan_2025}, who show, for example, that the random variable
		\[\frac{\dim_{\F_3}\Sel_\phi(\eab)-\dim_{\F_3}\Sel_{\phih}(\eabh) + \frac12\log\log X}{\sqrt{\frac32\log\log X}}\]
		for $E\in \E(X)$ converges to a standard Gaussian distribution as $X\to\infty$. They also show that the average size of $\dSelp$ is infinite. Thus, in this example, the global Selmer ratio dominates any other subtler phenomena.
		
		The goal of this paper is to investigate what happens to the distributions of $\dim_{\F_3}\Selp$ and $\dim_{\F_3}\dSelp$ when we control for the contribution of the Selmer ratio.
	\end{remark}
	
	\section{The cubic residue matrix}\label{sec:matrix}
	
	The goal of this section is to construct a matrix $M_{A, B}$ over $\F_3$, whose kernel is $\Sel_{\phih}(\eabh)$, viewed as a subspace of $\qmc$. A similar matrix was constructed in \cite{Chan_2023} for the elliptic curves $y^2 = x^3 + n^2$, by adapting results of \cite{Cohen_2009}, and our approach is similar. Throughout this section, we assume that $3\nmid AB$, or equivalently that $\eab$ has good reduction at $3$.
	
	\subsection{Local descent conditions}
	
	Each class in $\msc\Q$ has a unique cubefree integer representative $u=u_1^2u_2$, where $u_1, u_2$ are positive, squarefree, and coprime.
	
	\begin{theorem}[\cite{Cohen_2009}*{Thm.~3.1}]\label{thm:cohen}
		The class $[u_1^2u_2]\in\qmc$ belongs to $\Sel_{\phih}(\eabh)$ if and only if $u_1u_2\mid B$ and the plane cubic
		\begin{equation} \label{els}
			u_1 X^3 + u_2 Y^3 + \frac{B}{u_1u_2}Z^{3} - AXYZ=0
		\end{equation}
		is everywhere locally solvable. 
	\end{theorem}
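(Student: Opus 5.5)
We would prove \Cref{thm:cohen} by making the $\phih$-descent explicit: compute the fibres of the global Kummer map $\widehat\delta$ as curves, and then compare the local conditions defining $\Sel_{\phih}(\eabh)$ with local points on those curves. By \Cref{where_selmer_groups_live}, $H^1(\Q,\eabh[\phih])\cong\qmc$. For any $u\in\Q^\times$, the class $[u]$ defines a $\phih$-covering $C_u\to \eab$: the twist of $\phih\:\eabh\to\eab$ by the cocycle corresponding to $u$. Standard descent theory then gives the following criterion: $[u]$ lies in $\Sel_{\phih}(\eabh)$ if and only if $C_u(\Q_v)\neq\emptyset$ for every place $v$. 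This holds because, at each $v$, $\mathrm{res}_v[u]$ lies in the image of $\widehat\delta_v$ exactly when $C_u$ has a $\Q_v$-point. So the task reduces to writing down $C_u$ explicitly and showing that it is the cubic \eqref{els}, or empty, as appropriate.

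\textbf{Step 1: identify $C_u$ using $\widehat\delta(x,y)=y$.} The local Kummer map is $Q=(x,y)\mapsto y \bmod \Q_v^{\times3}$. Hence $C_u$ can be modelled as the set of points of $\eab$ whose $y$-coordinate is $u$ times a cube, i.e.\ the curve $y=uw^3$ on $\eab$. Substituting into $y^2+Axy+By=x^3$ gives $u^2w^6+Axuw^3+Buw^3=x^3$. Write $u=u_1^2u_2$ and homogenise with a suitable choice of $X,Y,Z$: roughly, set $y=u_1^2u_2X^3/Z^3$ and $-B-y-Ax=\ldots$. The factorisation $y(y+Ax+B)=x^3$ suggests writing $y=u\,\alpha^3$, $y+Ax+B=u^{-1}\beta^3$ (up to cubes) and $x=\alpha\beta$, which gives the linear relation
\[u\alpha^3+u^{-1}\beta^3+B-A\alpha\beta=0.\]
Clearing denominators with $u=u_1^2u_2$, we substitute $\beta=u_1u_2\beta'$, then scale and introduce a projective coordinate $Z$. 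The aim is to arrive at $u_1X^3+u_2Y^3+\frac{B}{u_1u_2}Z^3-AXYZ=0$. This is the Hessian-type family of plane cubics, and it matches the model of Cohen--Pazuki.

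\textbf{Step 2: show that $u_1u_2\mid B$ is necessary.} For $p\nmid B$ we must show that $\mathrm{res}_p[u]$ lies in the Kummer image only if $v_p(u)\equiv 0\pmod 3$. For $p\nmid 3B(A^3-27B)$ this is the unramified condition at a prime of good reduction. For $p\mid A^3-27B$ with $p\nmid B$, we would argue directly from the equation: $y(y+Ax+B)=x^3$ with $p\nmid B$ forces $v_p(y)\equiv 0\pmod 3$ for points of $\eab(\Q_p)$. For integral points this follows because $y$ and $y+Ax+B$ cannot both be divisible by $p$ unless $p\mid B$. For points in the formal group we use a separate valuation count, $v_p(y)=-3k$. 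At $p=3$ we use good reduction together with the same coprimality argument. This gives $u_1u_2\mid B$.

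\textbf{Step 3: conclude the equivalence.} Given $u_1u_2\mid B$, Step 1 shows that $C_u$ is birational over $\Q$ to \eqref{els}. Local solubility of a smooth projective curve is a birational invariant: smooth local points correspond, and Hensel's lemma handles the finitely many exceptional points. Therefore $[u]\in\Sel_{\phih}(\eabh)$ if and only if \eqref{els} is everywhere locally solvable.

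The main obstacle is Step 1. We must get the algebra and the normalisation exactly right, so that the covering map corresponding to the cocycle $[u]$, rather than $[u^{-1}]$, matches the stated coefficient pattern $(u_1,u_2,B/(u_1u_2))$. We also need to confirm that the resulting cubic is smooth, since its discriminant involves $B(A^3-27B)\neq0$. If the direct substitution proves messy, we would instead verify that the explicit degree-$3$ map from \eqref{els} to $\eab$, given by $(X:Y:Z)\mapsto (x,y)$ with $x=u_1u_2XY/Z^2$ (scaled) and $y=u_1^2u_2X^3/Z^3$, is a twist of $\phih$. This reduces to checking that it becomes isomorphic to $\phih$ over $\Q(\sqrt[3]{u})$.
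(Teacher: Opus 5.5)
Your argument is correct, but it takes a genuinely different route from the paper. The paper does no descent of its own. It quotes the local statement of Cohen--Pazuki's Theorem~3.1 (the image of the local Kummer map consists of the classes whose plane cubic has a local point) and applies it at every place, only noting the change of model $y^2=x^3+(ax+b)^2$ with $A=2a$, $B=2b$. You instead rebuild that local statement from the factorisation $y(y+Ax+B)=x^3$ and the formula $\widehat\delta_v(x,y)=y$.

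The step you left as ``roughly'' closes up exactly. On \eqref{els} one has
\[
(X:Y:Z)\longmapsto (x,y)=\Bigl(-u_1u_2\frac{XY}{Z^2},\; u_1^2u_2\frac{X^3}{Z^3}\Bigr),\qquad y+Ax+B=-u_1u_2^2\frac{Y^3}{Z^3},
\]
so this map lands on $E_{A,B}$ with $y\in u\,\Q^{\times 3}$. The points with $Z=0$, $X=0$, $Y=0$ lie over $\mathcal O$, $(0,0)$, $(0,-B)$, with Kummer images $1$, $B^{-1}$, $B$, as they should.

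A few points simplify or sharpen your write-up:
\begin{enumerate}
\item Your concern about $[u]$ versus $[u^{-1}]$ is moot, since replacing $u_1^2u_2$ by $u_1u_2^2$ merely swaps $X$ and $Y$.
\item Your Step~2 valuation argument works uniformly for every $p\nmid B$: integral points are handled by coprimality of $y$ and $y+Ax+B$, and non-integral points by $v_p(y)=-3k$. So neither your case split nor good reduction at $3$ is needed.
\item Step~3 never actually uses $u_1u_2\mid B$. Combined with Step~2, it shows that this divisibility is automatically forced by local solubility of \eqref{els}.
\item Read local solubility on the smooth cubic \eqref{els}, not on the affine model $y=uw^3$. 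That model has a triple point over $(0,0)$ which is always rational. Your Hensel remark in Step~3 handles this correctly.
\end{enumerate}

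Your route gives a self-contained proof. It explains where the coefficient pattern $(u_1,u_2,B/(u_1u_2))$ comes from and justifies the divisibility clause explicitly, without having to match conventions with Cohen--Pazuki. The paper's citation buys brevity, and it sets up the immediate reuse of Cohen--Pazuki's Section~5 local lemmas in the next theorem.
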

	
	\begin{proof}
		Over a local field, \cite{Cohen_2009}*{Thm.~3.1} identifies the image of the Kummer map with the classes in which the corresponding plane cubic has a rational point. Applying this result at every place of $\Q$ gives the stated Selmer condition. Note that their model $y^2 = x^3 + (ax + b)^2$ is obtained from $\eab$ by completing the square, with $A = 2a$ and $B = 2b$.
	\end{proof}
	
	Set $u_3 = \frac B{u_1u_2}$. Permuting the three coefficients of \eqref{els} does not change the relevant Selmer condition. Indeed, we have
	\[[u_1^2u_2][B] = [u_2^2u_3],\qquad [u_1^2u_2]^2=[u_1u_2^2]\]
	in $\msc \Q$. Now assume that $(w_1, w_2, w_3)$ is a permutation of $(u_1,u_2,u_3)$. Since $[B]$, by \Cref{guaranteed_point_in_selmer}, already lies in the Selmer group, it follows that $[u_1^2u_2]\in \Sel_{\phih}(\eabh)$ if and only if $[w_1^2w_2]\in \Sel_{\phih}(\eabh)$.
	
	With our normalisation assumption that $p^3\nmid B$ whenever $p\mid A$, the local criteria in \cite{Cohen_2009}*{Sec.~5} reduce to conditions only at primes dividing $A^3 -27B$.
	
	\begin{theorem}[\cite{Cohen_2009}*{Sec.5}]
		\label{thm:els}
		Suppose that $u_1u_2\mid B$, that $3\nmid AB$. Then $[u_1^2u_2]$ is an element of $\Sel_{\phih}(\eabh)$ if and only if the following conditions hold:
		\begin{enumerate}
			\item If $p\mid A^3-27B$ and $p\nmid B$, then ${u_1}/{u_2}$ is a cube in $\F_p^{\times}$.
			\item If $p\mid A^3-27B$ and $p\mid B$, let $(w_1, w_2, w_3)$ be a permutation of $(u_1, u_2, u_3)$ with $v_p(w_1) \leq v_p(w_2) \leq v_p(w_3)$. Then either
			\begin{enumerate}
				\item $v_p(w_1) = v_p(w_2) = 0$ and $w_1 / w_2$ is a cube  in $\F_p^{\times}$; or
				\item $v_p(w_2) = v_p(w_3) = 1$ and $w_2 / w_3$ is a cube  in $\F_p^{\times}$.
			\end{enumerate}
		\end{enumerate}
	\end{theorem}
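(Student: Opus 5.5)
By \Cref{thm:cohen}, the class $[u_1^2u_2]$ lies in $\Sel_{\phih}(\eabh)$ exactly when the plane cubic $C_u\colon u_1X^3+u_2Y^3+u_3Z^3-AXYZ=0$, with $u_3=B/(u_1u_2)$, has a point over every completion of $\Q$. The plan is to check local solvability one place at a time. The bulk of the work is showing that the only places that can fail are primes $p\mid A^3-27B$, and then computing the criterion at those primes.

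\textbf{Places with no condition.} At $v=\infty$ there is nothing to check: a real cubic always has a real point (for instance $X^3u_1+u_2Y^3=0$ with $Z=0$). At $p=3$ we have $3\nmid AB$, so $\eab$ has good reduction at $3$ and $c_3=1$. I would invoke the standard fact that for good reduction at $3$ the local Kummer image is the unramified subgroup of $\Q_3^\times/\Q_3^{\times3}$. The class $[u_1^2u_2]$ has $3$-adic valuation $0$ because $3\nmid B$. Alternatively, one can write down a point on $C_u$ directly by Hensel lifting from a smooth $\F_3$-point. In either case the condition at $3$ holds automatically.

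Now take $p\ne 3$ with $p\nmid B(A^3-27B)$. The curve $C_u$ reduces to a smooth cubic over $\F_p$, since its discriminant is essentially $u_1u_2u_3(A^3-27u_1u_2u_3)$. By Hasse--Weil it has an $\F_p$-point, and Hensel's lemma lifts it to $\Q_p$. So no condition arises here either.

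Next take $p\mid B$ with $p\nmid A^3-27B$. Then $p\nmid A$, and by the symmetry noted before the statement we may permute so that $p\nmid u_1u_2$ and $p\mid u_3$. Reducing mod $p$ gives $u_1X^3+u_2Y^3=AXYZ$. The point $(0:0:1)$ on the reduction has gradient $(0,0,u_3)\equiv 0$, so it is singular and cannot be lifted directly. Instead I would look at the cubic with the $Z$-term, or use the point corresponding to the tangent structure: setting $Z=1$, the reduction is a nodal cubic whose branches $AXY$ are split. A smooth point such as $X=1$, $Y\to$ a simple root of $u_1+u_2Y^3-AY\cdot Z$ with $Z$ chosen freely should exist, because $p\nmid A$ makes the equation linear in $Z$. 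Concretely, $Z=(u_1X^3+u_2Y^3)/(AXY)$ solves the equation exactly over $\Q_p$ with $XY\ne0$, so this case is automatic.

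\textbf{Primes dividing $A^3-27B$.} The same linear-in-$Z$ trick works at these primes whenever $p\nmid A$. Since $p\mid A^3-27B$ and $p\ne3$, we have $p\mid A$ if and only if $p\mid B$. So for $p\nmid B$ the trick seems to give solvability as well, which would contradict condition (1).

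This contradiction tells me the rational parametrisation trick is too naive. The issue is that $Z$ must be consistent with the exact equation, and the equation above genuinely does solve $C_u$ for any $X,Y$ with $XY\ne0$. So the real constraint must come from \Cref{thm:cohen}'s normalisation, namely that the classes are taken up to the identification via the $2a,2b$ model. Rather than resolve this by hand, I would defer to the explicit local computations in \cite{Cohen_2009}*{Sec.~5}, and my role would be to verify their hypotheses. Our normalisation $p^3\nmid B$ when $p\mid A$, together with $3\nmid AB$, removes their $p\mid A$, $v_p(B)\ge3$ branches and their $p=3$ cases. What remains of their list is exactly (1) and (2).

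The main obstacle is this bookkeeping: translating their cases, which are stated for $y^2=x^3+(ax+b)^2$, into our permutation-invariant form with $v_p(w_1)\le v_p(w_2)\le v_p(w_3)$. For this I would use the observation that $v_p(B)\in\{1,2\}$, so the valuation pattern of $(u_1,u_2,u_3)$ is either $(0,0,1)$ or $(0,1,1)$ up to permutation. These two patterns correspond to cases (a) and (b).
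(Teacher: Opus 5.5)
Your overall plan matches the paper's: reduce to local solubility of $C_u$ via \Cref{thm:cohen}, show that the places not dividing $A^3-27B$ impose no condition, and import Cohen--Pazuki's Lemmas~5.4 and~5.5 at the primes dividing $A^3-27B$. Where you argue directly, the paper simply cites Lemmas~5.3, 5.4(1) and~5.6. The genuine error is in your direct argument at $p\mid B$, $p\nmid A^3-27B$. The formula $Z=(u_1X^3+u_2Y^3)/(AXY)$ does not solve $C_u$, because you have dropped the term $u_3Z^3$. The equation is cubic in $Z$, and it is linear in $Z$ only modulo $p$, and only when $p\mid u_3$. Your normalisation $p\nmid u_1u_2$ is also not always available: when $p\nmid A$ there is no bound on $v_p(B)$, so two of the $u_i$ can be divisible by $p$. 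The correct argument is as follows. Permute so that $p\mid u_3$, which is possible since $p\mid B$, and set $X=Y=1$. Then $f(Z)=u_1+u_2+u_3Z^3-AZ\equiv u_1+u_2-AZ\pmod p$ has a root with $f'(Z)\equiv -A\not\equiv 0\pmod p$, so Hensel's lemma gives a $\Q_p$-point.

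Consequently the ``contradiction'' with condition~(1) is an artefact of this slip. Your explanation of it, a hidden normalisation in \Cref{thm:cohen} coming from the $(2a,2b)$ model, is wrong. The cubic is exactly the one stated, and that change of model only relates Cohen--Pazuki's $a,b$ to $A,B$. Deferring to Cohen--Pazuki is legitimate, as the paper does, but not on the grounds that the naive criterion looks inconsistent.

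In fact condition~(1) follows directly. If $p\nmid B$ and $p\mid A^3-27B$, then $p\nmid 3A$. Put $\lambda=A/3$, so that $\lambda^3\equiv u_1u_2u_3\pmod p$. The identity $x^3+y^3+z^3-3xyz=\prod_{\zeta^3=1}(x+\zeta y+\zeta^2 z)$ then shows that $C_u \bmod p$ is three distinct lines over $\overline{\F}_p$.
\begin{itemize}
\item If $u_1/u_2$ is a cube in $\F_p^\times$ (automatic when $p\equiv 2\pmod 3$), one of these lines is defined over $\F_p$, and its smooth points lift.
\item Otherwise Frobenius permutes the three lines, and hence their three intersection points, cyclically. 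So $C_u(\F_p)$, and therefore $C_u(\Q_p)$, is empty.
\end{itemize}

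Two smaller slips. For $v_p(B)\in\{1,2\}$ the valuation patterns are $(0,0,1)$, $(0,0,2)$ and $(0,1,1)$ up to permutation; you omitted $(0,0,2)$, which also falls under~(a). At $p=3$, the Kummer image is the unit subgroup $\Z_3^\times/\Z_3^{\times 3}$, not the unramified subgroup; the latter is zero because $\mu_3\not\subset\Q_3^{\mathrm{ur}}$. Your Hensel alternative at $3$ is fine.
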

	
	\begin{proof}
		By \Cref{thm:cohen}, $[u_1^2u_2]\in \Sel_{\phih}(\eabh)$ if and only if the plane cubic
		\[u_1X^3+u_2Y^3+u_3Z^3-AXYZ=0\]
		is everywhere locally soluble. By \cite{Cohen_2009}*{Lemma~5.3}, a prime $p\ne 3$ with
		$p\nmid B(A^3-27B)$ gives no local condition. If $p\mid B$ but
		$p\nmid A^3-27B$, then $p\nmid A$, so \cite{Cohen_2009}*{Lemma~5.4(1)}
		again gives no local condition. Since $3\nmid A$, \cite{Cohen_2009}*{Lemma~5.6}
		gives no condition at $p=3$.
		
		It remains to consider primes $p\mid A^3-27B$. If $p\nmid B$, then
		\cite{Cohen_2009}*{Lemma~5.5} gives the condition that ${u_1}/{u_2}$ is a cube in $\F_p^{\times}$. If $p\mid B$, then necessarily $p\mid A$, and the stated criterion is
		exactly \cite{Cohen_2009}*{Lemma~5.4}, after permuting the three coefficients so that their $p$-adic valuations are ordered.
	\end{proof}
	
	
	\subsection{The restricted cubic residue symbol}
	
	In \Cref{thm:els}, we need to determine whether a value is a cube modulo $p$. One way to detect this would be to use the standard cubic residue symbol, however, this has the computational disadvantage of requiring us to work in $\Q(\zeta_3)$. Instead, we observe that we are only interested in determining when an integer $a$ is a cube modulo $p$. We can therefore define the following restricted symbol.
	
	\begin{definition}
		Let $p\ne 3$ be a prime and $a \in \Z$ such that $p \nmid a$. The cubic residue symbol $\left(\frac{a}{p}\right)_3$ is defined in $\F_p$ as follows:
		
		\begin{enumerate}
			\item If $p \equiv 2 \pmod 3$, then $\left(\frac{a}{p}\right)_3 = 1$.
			\item If $p \equiv 1 \pmod 3$, then $\left(\frac{a}{p}\right)_3 \equiv a^{\frac{p-1}{3}} \pmod p$. 
			This value lies in the set $\{1, \omega, \omega^2\}$, where $\omega$ is a primitive cube root of unity in $\F_p\t$.
		\end{enumerate}
	\end{definition}
	
	\begin{remark}
		The choice of $\omega, \omega^2\in \Fp\t$ is non-canonical. Indeed, in the usual definition of the cubic residue, this choice corresponds to choosing a prime ideal above $p$ in $\Z[\zeta_3]$. For consistency, we impose the convention that, viewed as elements of $\{1, \ldots, p-1\}$, we have $\omega<\omega^2$.
		
		If we normalise the standard cubic residue symbol with respect to this choice, our restricted cubic residue agrees with the standard residue when $a\iZ$.
	\end{remark}
	
	\begin{lemma}
		\label{cube_detection}
		An integer $a$ with $p\nmid a$ is a cubic residue modulo $p$ if and only if $\left(\frac{a}{p}\right)_3 = 1$.
	\end{lemma}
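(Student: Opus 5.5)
The plan is to split according to the residue class of $p$ modulo $3$ and to use only that $\F_p^\times$ is cyclic of order $p-1$. Throughout, $p\ne 3$ and $p\nmid a$, so $a$ defines a nonzero class in $\F_p^\times$.

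\emph{Case $p\equiv 2\pmod 3$.} Here $\gcd(3,p-1)=1$, so the cubing map $x\mapsto x^3$ on the finite group $\F_p^\times$ is injective. An injective self-map of a finite set is bijective, so every element of $\F_p^\times$ is a cube. Hence $a$ is always a cubic residue, and by definition $\left(\frac{a}{p}\right)_3=1$, so the equivalence holds trivially. The prime $p=2$ falls in this case and causes no trouble.

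\emph{Case $p\equiv 1\pmod 3$.} Fix a generator $g$ of $\F_p^\times$ and write $a\equiv g^k$. If $a\equiv b^3$ with $b\in\F_p^\times$, then Fermat's little theorem gives
\[a^{(p-1)/3}\equiv b^{p-1}\equiv 1 \pmod p.\]
Conversely, suppose $a^{(p-1)/3}\equiv 1$. Then $g^{k(p-1)/3}=1$, so $(p-1)\mid k(p-1)/3$, which forces $3\mid k$. Writing $k=3j$ gives $a\equiv (g^{j})^3$, so $a$ is a cube. Equivalently, one can note that the cubes form the kernel of the surjective homomorphism $x\mapsto x^{(p-1)/3}$ onto $\mu_3(\F_p)=\{1,\omega,\omega^2\}$, and the cubes form a subgroup of index $3$.

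There is no real obstacle here. The only care needed is to remark that the value $a^{(p-1)/3}$ does lie in $\{1,\omega,\omega^2\}$, since its cube is $a^{p-1}=1$. The non-canonical choice of $\omega$ is irrelevant, because the statement only concerns whether the symbol equals $1$.
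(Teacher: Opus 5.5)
Your proposal is correct and follows essentially the paper's proof: a case split on $p \bmod 3$, with the case $p\equiv 1\pmod 3$ handled through a primitive root $g$ and the divisibility $3\mid k$, exactly as the paper does. The only cosmetic difference is for $p\equiv 2\pmod 3$, where you deduce that cubing is surjective from its injectivity on a finite group, whereas the paper uses B\'ezout to write $a\equiv (a^u)^3$ explicitly.
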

	\begin{proof}
		First of all, if $p \equiv 2 \pmod 3$, then $\gcd(3, p-1) = 1$ and by B\'ezout's Identity, there exist integers $u, v$ such that
		\( 3u + (p-1)v = 1 \).
		For any $a \in \F_p^\times$, we can write:
		\[ a =  a^{3u + (p-1)v} = (a^u)^3 \cdot (a^{p-1})^v \]
		By Fermat's little theorem, $a^{p-1} \equiv 1 \pmod p$, so
		\[ a \equiv (a^u)^3 \pmod p \]
		This proves that every element in $\F_p\t$ is a cube. 
		
		Let's now turn to the case where $p\equiv 1 \pmod 3$. The multiplicative group $\F_p^\times$ is cyclic of order $p-1$. Let $g$ be a primitive root modulo $p$ and suppose $a \equiv g^k \pmod p$ for some power $k$. We claim that $a$ is a cubic residue if and only if $3 \mid k$.
		\begin{itemize}
			\item $(\implies)$ If $a \equiv x^3 \pmod p$, then $a^{\frac{p-1}{3}} \equiv (x^3)^{\frac{p-1}{3}} \equiv x^{p-1} \equiv 1 \pmod p$.
			\item $(\impliedby)$ If $a^{\frac{p-1}{3}} \equiv 1 \pmod p$, we have
			\[ (g^k)^{\frac{p-1}{3}} \equiv g^{\frac{k(p-1)}{3}} \equiv 1 \pmod p \]
			Since $g$ is a primitive root, the exponent must be a multiple of the order $p-1$. That is, $p-1$ divides $\frac{k(p-1)}{3}$, which in turn implies that $3 \mid k$.
		\end{itemize}
		The roots of $X^3-1$ in $\F_p$ are $\{1, \omega, \omega^2\}$.
		If $3 \nmid k$, $a^{\frac{p-1}{3}}$ must take value in $\{\omega, \omega^2\}$ Thus, $\left(\frac{a}{p}\right)_3 = 1$ if and only if $a$ is a cube modulo $p$.
	\end{proof}
	
	\subsection{The matrix $M_{A,B}$}\label{subsec:matrix}
	
	We fix the following notation:
	\begin{itemize}[leftmargin=*]
		\item Let $p_1<p_2<\cdots < p_s$ be the primes congruent to $1\pmod 3$ that divide both $B$ and $A^3-27B$.
		\item Let $p_{s+1}<\cdots <p_n$ be the remaining primes dividing $B$.
		\item Set $v_j = v_{p_j}(B)$. For $j \le s$, we have $v_j\in \{1,2\}$.
		\item Put $q_i = p_i$ for $1\le i \le s$, and let $q_{s+1}<\cdots<q_\l$ be the primes congruent to $1\pmod 3$ that divide $A^3-27B$ but not $B$.
	\end{itemize}
	
	\begin{definition}
		\label{matrix}
		The cubic-residue matrix $\mab$ is the $\l\times n$ matrix over $\F_3$ given by
		\[
		(\mab)_{ij} = 
		\begin{cases}
			\displaystyle	 2 \log_\omega\left(\frac{B / {p_i}^{v_i}}{p_i}\right)_3 &\quad\text{ if }i \leq s \text{ and } i =  j \\
			\displaystyle\log_\omega \left(\frac{{p_j}^{v_i}}{q_i}\right)_3=\displaystyle\log_\omega \left(\frac{{p_j}^{v_i}}{p_i}\right)_3 &\quad\text{ if }  i\leq s \text{ and } i\neq j \\
			\displaystyle\log_\omega \left(\frac{p_j}{q_i}\right)_3 &\quad\text{ if }  s < i \leq l  \\
		\end{cases}
		\]
	\end{definition}
	
	Here $\log_{\omega}\:\{1,\omega,\omega^2\}\to\F_3$ maps $\omega^j\mapsto j$.
	
	\begin{theorem}
		\label{thm:kernel}
		The coordinate map
		\begin{align*}
			\F_3^n&\to\qmc\\
			(e_1, \ldots, e_n)&\mapsto \left[\prod_{j=1}^np_j^{e_j}\right]
		\end{align*}
		restricts to an isomorphism
		\[\ker\mab\xrightarrow{\sim}\Sel_{\hat\phi}(\eabh).\]
	\end{theorem}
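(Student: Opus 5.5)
The plan is to identify the source and target of the coordinate map, then translate the local conditions of \Cref{thm:els} row by row into linear equations over $\F_3$. The main work is at the primes $p_i$ with $i\le s$.

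\textbf{Injectivity and the image.} The primes $p_1,\dots,p_n$ are distinct, and every class in $\qmc$ has a unique cubefree representative. Hence $(e_1,\dots,e_n)\mapsto[\prod p_j^{e_j}]$, with $e_j\in\{0,1,2\}$, is an injective homomorphism $\F_3^n\to\qmc$. Its image is exactly the set of classes $[u_1^2u_2]$ with $u_1u_2\mid B$: take $u_1=\prod_{e_j=2}p_j$ and $u_2=\prod_{e_j=1}p_j$. By \Cref{thm:cohen}, $\Sel_{\hat\phi}(\eabh)$ lies in this image. So it suffices to show that, for $u=\prod p_j^{e_j}$, the conditions of \Cref{thm:els} hold if and only if $\mab e=0$. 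There is one condition for each prime $p\equiv 1\pmod 3$ dividing $A^3-27B$, since by \Cref{cube_detection} primes $p\equiv 2\pmod 3$ impose nothing. These are precisely the row indices $q_1,\dots,q_\l$.

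\textbf{Rows $i>s$.} Here $q_i\nmid B$, and condition (1) asks that $u_1/u_2$ be a cube mod $q_i$. Since $(u_1/u_2)^2=u_1^2u_2\cdot u_2^{-3}$, and squaring is an automorphism of $\F_p^\times/\F_p^{\times3}\cong\F_3$, this is equivalent to $u$ being a cube mod $q_i$. By \Cref{cube_detection} and multiplicativity of the restricted symbol, this says $\sum_j e_j\log_\omega\left(\frac{p_j}{q_i}\right)_3=0$, which is row $i$ of $\mab e=0$.

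\textbf{Rows $i\le s$ (the main obstacle).} Here $p_i$ divides both $B$ and $A^3-27B$, and condition (2) depends on the valuations of $(u_1,u_2,u_3)$ at $p_i$. To avoid a case split, I will first reduce to $e_i=0$. The condition at $p_i$ is membership of $u$ in the local Kummer image at $p_i$, which is a subgroup containing $[B]$ by \Cref{guaranteed_point_in_selmer}. So $u$ satisfies it if and only if $uB^k$ does, for any $k$. Since $v_i\in\{1,2\}$ is invertible in $\F_3$ with $v_i^{2}=1$, the choice $k=-e_iv_i$ gives $uB^k$ exponent $0$ at $p_i$.

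When the exponent at $p_i$ is $0$, the valuations are $(0,0,v_i)$. Alternative (b) of \Cref{thm:els}(2) would need two valuations equal to $1$, which is impossible. So the condition is (a): $w_1/w_2$ is a cube mod $p_i$, where $w_1/w_2$ is $u_1/u_2$ up to inversion. As in the previous case, this is equivalent to the prime-to-$p_i$ part of the representative being a cube mod $p_i$.

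Applied to $uB^k$, this gives
\[\sum_{j\ne i}(e_j+kv_j)\log_\omega\left(\tfrac{p_j}{p_i}\right)_3=0.\]
Multiplying by $v_i$ turns $\log_\omega(p_j/p_i)$ into $\log_\omega(p_j^{v_i}/p_i)$, which is the off-diagonal entry. It remains to compute the $k$-contribution. Since $\sum_{j\ne i}v_j\log_\omega(p_j/p_i)=\log_\omega\left(\frac{B/p_i^{v_i}}{p_i}\right)_3$, this contribution is
\[v_ik\log_\omega\left(\tfrac{B/p_i^{v_i}}{p_i}\right)_3=-e_i\log_\omega\left(\tfrac{B/p_i^{v_i}}{p_i}\right)_3=2e_i\log_\omega\left(\tfrac{B/p_i^{v_i}}{p_i}\right)_3,\]
using $v_i^2=1$. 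This is exactly the diagonal entry, so we recover row $i$ of $\mab e=0$.

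Before writing this up, I will double-check two points: that \Cref{thm:els}(2) is genuinely a local condition at $p_i$ alone, so the subgroup argument applies, and that the inversion ambiguity in $w_1/w_2$ does not matter.

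Combining the three steps, the coordinate map sends $\ker\mab$ bijectively onto $\Sel_{\hat\phi}(\eabh)$, which proves the theorem.
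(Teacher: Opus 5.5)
Your argument is correct, but for the rows $i\le s$ it takes a different route from the paper.

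The paper computes $(\mab\mathbf e)_i$ directly for $e_i=0$ and $e_i\ne 0$. It turns each vanishing condition into a cube condition on one of the ratios $u_1/u_2$, $u_3/u_1$, $u_3/u_2$, $u_2/u_3$, $u_1/u_3$, depending on $(e_i,v_i)$. It then matches these against two tables listing every valuation pattern at $p_i$; for $v_i=2$ and $e_i\ne 0$ this uses alternative (b) of \Cref{thm:els}(2).

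You instead use the group structure. Translating by $[B]^{-e_iv_i}$ kills the exponent at $p_i$, so only the pattern $(0,0,v_i)$ and alternative (a) ever occur. The diagonal entry $2\log_\omega\left(\frac{B/p_i^{v_i}}{p_i}\right)_3$ then emerges as the contribution of the translation; it is exactly the entry that makes row $i$ vanish on $(v_1,\dots,v_n)$.

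What your route buys is no case split and a conceptual explanation of the matrix's shape. What it costs is an input the paper's proof does not need. You need that condition (2) at $p_i$ is membership of $[u]$ in the local Kummer image $\widehat\delta_{p_i}(\eab(\Q_{p_i}))$. That image is a subgroup of $\Q_{p_i}^\times/\Q_{p_i}^{\times 3}$ containing $\widehat\delta_{p_i}((0,0))=[B]^{-1}$, hence $[B]$; note this is the local fact, not the global \Cref{guaranteed_point_in_selmer} you cite. The input is legitimate. \Cref{thm:els}(2) is Cohen--Pazuki's criterion for solubility of the cubic over $\Q_{p_i}$, and the local form of their Theorem~3.1, invoked in the proof of \Cref{thm:cohen}, identifies local solubility with the local Kummer image. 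The paper, by contrast, uses \Cref{thm:els} purely as a list of conditions.

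Your inversion concern is harmless, since $x$ is a cube mod $p$ if and only if $x^{-1}$ is.

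One small point should be stated explicitly. Take a prime $p\equiv 2\pmod 3$ dividing both $B$ and $A^3-27B$. \Cref{cube_detection} only disposes of the cube conditions there; you also need that the valuations always fit pattern (a) or (b). This follows from your own translation trick, since $p\mid A$ forces $v_p(B)\in\{1,2\}$. Alternatively, check the few possible valuation triples directly.
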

	\begin{proof}
		By \Cref{thm:cohen}, every element $\Sel_{\phih}(\eabh)\sub\qmc$ has a unique cubefree representative $u = u_1^2u_2$ with $u_1u_2\mid B$. We can therefore write 
		\[u = \prod_{j = 1}^np_j^{e_j}\]
		for some vector $\mathbf{e} = (e_1, e_2 , \ldots, e_n) \in \F_3^n$.
		
		We will show that $u\in \Sel_{\phih}(\eabh)$ if and only if the corresponding vector $\mathbf{e}$ is in $\ker(M)$. We start by describing the vectors in $\ker(M)$. 
		
		When $1 \leq i  \leq s$, the $i$-th entry of $M\mathbf{e}$ is 
		\[
		(M\mathbf{e})_i=
		\begin{cases}
			\displaystyle \sum_{\substack{j=1\\j\ne i}}^{n} e_j \cdot  \log_\omega \left( \frac{{p_j}^{v_i}}{p_i} \right)_3 = v_i\log_{\omega} \left( \frac{\prod_{j = 1}^{n} {p_j}^{e_j}}{p_i} \right)_3  = v_i\log_\omega\left( \frac{u}{p_i} \right)_3&\text{ if } e_i = 0 \\
			\displaystyle\sum_{\substack{j=1\\j\ne i}}^{n} v_i\log_\omega \left( \frac{{p_j}^{e_j}}{p_i} \right)_3 +2  e_i\log_\omega\left(\frac{B/ {p_i}^{v_i}}{p_i}\right)_3 = 2\log_{\omega} \left[\left(\frac{B^{e_i} / u^{v_i}}{p_i}\right)_3\right] &\text{ if } e_i \neq  0 
		\end{cases}
		\]
		Let's see the implications of $(M\mathbf{e})_i = 0$ in terms of $u_1$, $u_2$, and $u_3$.
		\begin{enumerate}
			\item When $e_i = 0$, $(M\mathbf{e})_i = 0$ if and only if $\left( \frac{u}{p_i} \right)_3 = 1$, which is equivalent to $u$ being cube mod $p_i$. Since $u = u_1^2u_2$, $u$ being a cube is equivalent to ${u_1}/{u_2}$ being a cube. 
			\item When $e_i = 1$, $(M\mathbf{e})_i = 0$ if and only if $B/ u^{v_i}$ is a cube. If $v_i = 1$, since $u = u_1^2u_2$, this is equivalent to ${u_3}/{u_1}$ being a cube. If $v_i = 2$, this is equivalent to ${u_3}/{u_2}$ being a cube. 
			\item When $e_i = 2$, $(M\mathbf{e})_i = 0$ if and only if $B^2/ u^{v_i}$ is a cube. If $v_i = 1$, this is equivalent to ${u_2}/{u_3}$ being a cube. If $v_i = 2$, this is equivalent to ${u_1}/{u_3}$ being a cube. 
		\end{enumerate}
		In summary, if $1\le i \le s$, then
		\[
		(M\mathbf{e})_i = 0 \iff \begin{cases}
			u_1 / u_2 \text{ is a cube mod } p_i \qquad &\text{ if } e_i = 0 \\
			u_3 / u_1 \text{ is a cube mod } p_i & \text{ if } e_i = 1 \text{ and } v_i = 1\\
			u_3/u_2 \text{ is a cube mod } p_i &\text{ if } e_i = 1 \text{ and } v_i = 2 \\
			u_2/u_3 \text{ is a cube mod } p_i &\text{ if } e_i = 2 \text{ and } v_i = 1 \\
			u_1 / u_3 \text{ is a cube mod } p_i &\text{ if } e_i = 2 \text{ and } v_i = 2
		\end{cases}
		\]
		When $s+1 \leq i \leq \l$, we have
		\[
		(M\mathbf{e})_i = \sum_{j=1}^{n} e_j \cdot  \log_\omega \left( \frac{p_j}{q_i} \right)  = \log_\omega\left[\left( \frac{u}{q_i} \right)_3\right]
		\]
		Thus $(M\mathbf{e})_i = 0$ if and only if $u$ and hence $u_1 / u_2$ is a cube mod $q_i$.
		
		It remains to compare these conditions to \Cref{thm:els}. If $s + 1 \le i \le \l$, then $q_i\mid A^3-27B$ but $q_i\nmid B$, and the local condition at $q_i$ is satisfied if and only if $u_1/u_2$ is a cube mod $q_i$, which matches the condition $(M\mathbf{e})_i = 0$.
		
		So suppose that $1 \le i \le s$. Then $q_i = p_i$ divides both $B$ and $A^3-27B$. Since $u=u_1^2u_2$ is cubefree, the integers $u_1$ and $u_2$ are squarefree and coprime. Hence $v_{p_i}(u_1)$ and $v_{p_i}(u_2)$ are in $\{0,1\}$ and at most one of them is nonzero. Recall also that $v_i = v_{p_i}(B) \in\{1,2\}$. If $v_i = 1$, then the requirements in \Cref{thm:els} are:
		\begin{center}
			
			\begin{tabular}{|m {3.5em}|m {3.5em}|m {3.5em}|m {11em}|m {4.7em}|}		
				\hline 
				$v_p(u_1)$ & $v_p(u_2)$ & $v_p\left(u_3 \right)$ & Requirement &$ v_p(u) $\\
				\hline
				0 & 0 & 1 &  ${u_1}/{u_2}$ is a cube mod $p$ & 0 ($e_i = 0$)\\
				\hline
				0 & 1 & 0 & ${u_3}/{u_1} $ is a cube mod $p$ & 1 ($e_i = 1$)  \\
				\hline
				1 & 0 & 0 &  ${u_2}/{u_3}$ is a cube mod $p$ & 2 ($e_i = 2$)\\
				\hline
			\end{tabular}
		\end{center}
		
		If $v_i = 2$, then the requirements in \Cref{thm:els} are:
		\begin{center}
			\begin{tabular}{|m {3.5em}|m {3.5em}|m {3.5em}|m {11em}|m {4.7em}|}		
				\hline 
				$v_p(u_1)$ & $v_p(u_2)$ & $v_p\left(u_3 \right)$ & Requirement &$ v_p(u) $\\
				\hline
				0& 0  &2  &  ${u_1}/{u_2}$ is a cube mod $p$ & 0 ($e_i = 0$)\\
				\hline
				0 & 1 & 1 & $ {u_3}/{u_2} $ is a cube mod $p$ & 1 ($e_i = 1$) \\
				\hline
				1 & 0 & 1 &  ${u_1}/{u_3}$ is a cube mod $p$ & 2 ($e_i = 2$)\\
				\hline
			\end{tabular}
		\end{center}
		
		These tables match the previous summary: the local condition at $p_i$ in \Cref{thm:els} is satisfied if and only if $(M\mathbf{e})_i = 0$.
		
		Thus all the conditions in \Cref{thm:els} are satisfied if and only if $M\mathbf{e} = 0$, as required.
	\end{proof}
	
	\subsection{The reduced matrix $\mab'$}
	
	Write 
	\[B = \prod_{j= 1}^np_j^{v_j}\]
	By \Cref{guaranteed_point_in_selmer} and \Cref{thm:kernel}, the vector $(v_1, \ldots, v_n)$ lies in $\ker\mab$ and represents $[B]$. Recall that we have assumed that $B$ is not a rational cube, so this vector is non-zero. Let
	\[k = \max\{j : v_j \not\equiv 0\pmod 3\}.\]
	
	\begin{definition}
		The reduced cubic-residue matrix $M'_{A, B}$ is the matrix obtained from $\mab$ by deleting its $k$th column.
	\end{definition}
	
	\begin{theorem}\label{thm:ker'}
		There is an isomorphism
		\[\ker\mab'\simeq \Sel_{\hat\phi}(\eabh)/\langle [B]\rangle.\]
	\end{theorem}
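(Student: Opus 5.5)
The approach is pure linear algebra on top of \Cref{thm:kernel}. Write $\mathbf v = (v_1,\ldots,v_n)\in\F_3^n$ for the vector of exponents of $B$ reduced mod $3$. By \Cref{guaranteed_point_in_selmer} and \Cref{thm:kernel}, $\mathbf v\in\ker\mab$ and maps to $[B]$. Since $B$ is not a cube, $\mathbf v\neq 0$, and by the choice of $k$ its $k$th coordinate $v_k$ is nonzero in $\F_3$. So it suffices to show that
\[\ker\mab' \simeq \ker\mab/\langle \mathbf v\rangle,\]
and then compose with the isomorphism $\ker\mab\xrightarrow{\sim}\Sel_{\hat\phi}(\eabh)$ of \Cref{thm:kernel}. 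That isomorphism sends $\langle\mathbf v\rangle$ onto $\langle[B]\rangle$.

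To build the first isomorphism, let $H=\{\mathbf e\in\F_3^n : e_k=0\}$. Identify $H$ with $\F_3^{n-1}$ by deleting the $k$th coordinate. For $\mathbf e\in H$, the product $\mab\mathbf e$ equals $\mab'$ applied to $\mathbf e$ with its $k$th coordinate removed, since the $k$th column is multiplied by $e_k=0$. Hence this identification carries $\ker\mab\cap H$ isomorphically onto $\ker\mab'$.

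It remains to show that $\ker\mab\cap H\to\ker\mab/\langle\mathbf v\rangle$ is an isomorphism.
\begin{enumerate}
\item For injectivity, the intersection $\langle \mathbf v\rangle\cap H$ is trivial, because $c\mathbf v$ has $k$th coordinate $cv_k$, which vanishes only when $c=0$.
\item For surjectivity, given $\mathbf e\in\ker\mab$, the vector $\mathbf e - e_kv_k^{-1}\mathbf v$ lies in both $\ker\mab$ and $H$, and it has the same class as $\mathbf e$.
\end{enumerate}
Equivalently, $\ker\mab = (\ker\mab\cap H)\oplus\langle\mathbf v\rangle$.

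I do not expect a genuine obstacle. The only point needing care is that the inputs are used correctly. First, the vector representing $[B]$ really is $\mathbf v$ reduced mod $3$: the coordinate map of \Cref{thm:kernel} only sees exponents mod $3$, and $[B]$ corresponds to $\prod_j p_j^{v_j}$. Second, the nonvanishing of $v_k$ in $\F_3$ is exactly guaranteed by the definition of $k$, whose existence uses that $B$ is not a cube.
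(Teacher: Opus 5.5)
Your proof is correct, and it rests on the same fact as the paper's: the kernel vector $\mathbf v$ representing $[B]$ has $v_k\neq 0$. The paper applies this fact to the columns of $M_{A,B}$ rather than to its kernel. Since $\mathbf v\in\ker M_{A,B}$ with $v_k\neq 0$, the $k$th column of $M_{A,B}$ is a linear combination of the other columns, so deleting it leaves the rank unchanged. Hence $\operatorname{nullity}(M'_{A,B})=\operatorname{nullity}(M_{A,B})-1=\dim_{\F_3}\Sel_{\hat\phi}(\eabh)-1$, and the isomorphism follows abstractly by comparing dimensions of $\F_3$-vector spaces.

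Your splitting $\ker M_{A,B}=(\ker M_{A,B}\cap H)\oplus\langle\mathbf v\rangle$ instead gives an explicit isomorphism. It identifies $\ker M'_{A,B}$ with the subgroup of Selmer classes whose cubefree representative is prime to $p_k$, which is an explicit complement to $\langle[B]\rangle$ in $\Sel_{\hat\phi}(\eabh)$. The paper's route is shorter and is all that is needed later, since only nullities and ranks enter the corollary on $\Sel_\phi(\eab)$ and the random matrix model. Yours gives slightly more information at essentially no extra cost.
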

	
	\begin{proof}
		Since the $k$-th coordinate of the kernel vector $(v_1,\ldots,v_n)$ is nonzero, deleting the $k$-th column does not change the rank of $M_{A,B}$. It follows from \Cref{thm:kernel} that
		\[ \operatorname{nullity}(M_{A, B})= \dim_{\F_3}\Sel_{\phih}(\eabh) \]
		and that
		\[ \operatorname{nullity}(M_{A, B}')= \dim_{\F_3}\Sel_{\phih}(\eabh) -1 = \dim_{\F_3}\br{\Sel_{\phih}(\eabh)/\langle [B]\rangle}.\]
	\end{proof}
	
	\subsection{The Selmer ratio and the dimension of $\Sel_\phi(\eab)$}
	
	Recall from \Cref{prop:selmer ratio} that the global Selmer ratio of $\eab$ is $3^{t_{A,B}}$, where
	\[t_{A,B} = -1 - \omega(B) + \omega_1(A^3-27B).\]
	
	\begin{theorem}\label{thm: matrix cols}
		Suppose that $M'_{A,B}$ has dimensions $\l\times m$. Then
		\[\l-m = 1-\omega(B)+\omega_1(A^3-27B)=t_{A,B}+2.\]
	\end{theorem}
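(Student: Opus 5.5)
This is a counting argument on the construction of $\mab$ in \Cref{subsec:matrix}, followed by a substitution from \Cref{prop:selmer ratio}.

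\emph{Counting columns.} The columns of $\mab$ are indexed by the primes $p_1,\dots,p_n$. By construction these are exactly the distinct primes dividing $B$: first the primes $\equiv 1\pmod 3$ dividing both $B$ and $A^3-27B$, then all remaining primes dividing $B$. Hence $n=\omega(B)$. Since $B$ is not a cube, $\mab'$ is obtained by deleting exactly one column, so $m=n-1=\omega(B)-1$.

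\emph{Counting rows.} The rows of $\mab$ are indexed by $q_1,\dots,q_\l$. The first $s$ of these are the primes $\equiv 1\pmod 3$ dividing both $B$ and $A^3-27B$. The remaining $q_{s+1},\dots,q_\l$ are the primes $\equiv 1\pmod 3$ dividing $A^3-27B$ but not $B$. Together they form exactly the set of primes $\equiv 1\pmod 3$ dividing $A^3-27B$. Here I will note that $3\nmid A^3-27B$, since $3\nmid A$ under the standing assumption $3\nmid AB$. The prime $3$ is $\not\equiv 1\pmod 3$ anyway, so it would not be counted regardless. Thus the row count is $\l=\omega_1(A^3-27B)$, and deleting a column does not change it.

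\emph{Combining.} Subtracting gives
\[\l-m=\omega_1(A^3-27B)-\omega(B)+1.\]
By \Cref{prop:selmer ratio}, $t_{A,B}=-1-\omega(B)+\omega_1(A^3-27B)$, so this equals $t_{A,B}+2$.

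The only point needing care is confirming that the two index lists are disjoint and exhaustive. This holds because the primes $p_{s+1},\dots,p_n$ are defined as the complement of $p_1,\dots,p_s$ inside the prime divisors of $B$, and the primes $q_{s+1},\dots,q_\l$ are defined by the condition of not dividing $B$. Beyond that check, the proof is routine bookkeeping.
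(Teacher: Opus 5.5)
Your proposal is correct and follows the paper's proof: the columns of $M_{A,B}$ are indexed by the $\omega(B)$ distinct primes dividing $B$, with one column deleted to form $M'_{A,B}$, and the rows by the $\omega_1(A^3-27B)$ primes $\equiv 1 \pmod 3$ dividing $A^3-27B$; substituting the formula for $t_{A,B}$ from the Selmer-ratio corollary then gives the identity. Your extra checks, that the index lists are disjoint and exhaustive and that $3$ is never counted, are sound but already implicit in the construction.
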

	
	\begin{proof}
		By construction, the columns of $M_{A,B}$ are indexed by the distinct prime factors of $B$, and $M'_{A,B}$ has one column removed, so $m=\omega(B)-1$. The rows are indexed by the primes dividing $A^3-27B$ that are $1\pmod 3$, so $\l=\omega_1(A^3-27B)$. Thus
		\[
		\l-m=1-\omega(B)+\omega_1(A^3-27B).
		\]
		The equality with $t_{A,B}+2$ follows from \Cref{prop:selmer ratio}.
	\end{proof}
	\begin{corollary}
		\[
		\operatorname{nullity}((M'_{A,B})^{T}) = \dim_{\F_3} \sel_{\phi}(\eab)
		\]
	\end{corollary}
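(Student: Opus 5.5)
The plan is pure linear algebra: combine the rank--nullity theorem for $M'_{A,B}$ and its transpose with \Cref{thm:ker'}, \Cref{thm: matrix cols}, and the Greenberg--Wiles formula in the form of \Cref{prop:selmer ratio}. Let $M'_{A,B}$ have size $\l\times m$ and write $r$ for its rank. Then
\[\nullity(M'_{A,B}) = m - r, \qquad \nullity((M'_{A,B})^T) = \l - r,\]
since a matrix and its transpose have the same rank. Subtracting, we get
\[\nullity((M'_{A,B})^T) = \nullity(M'_{A,B}) + (\l - m).\]

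Next we identify each term on the right arithmetically. By \Cref{thm:ker'}, which uses the standing assumption that $B$ is not a cube,
\[\nullity(M'_{A,B}) = \dim_{\F_3}\Sel_{\phih}(\eabh) - 1.\]
By \Cref{thm: matrix cols},
\[\l - m = t_{A,B} + 2.\]
Combining these gives
\[\nullity((M'_{A,B})^T) = \dim_{\F_3}\Sel_{\phih}(\eabh) + t_{A,B} + 1.\]

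It remains to show that the right-hand side equals $\dim_{\F_3}\Sel_\phi(\eab)$. Apply \Cref{thm:greenberg-wiles} with $\#\eab[\phi](\Q) = 3$ and $\#\eabh[\phih](\Q) = 1$. Since $c(\phi,\eab) = 3^{t_{A,B}}$, this gives
\[3^{t_{A,B}} = \frac{\#\Sel_\phi(\eab)}{\#\Sel_{\phih}(\eabh)}\cdot\frac13.\]
Taking $\log_3$ yields
\[\dim\Sel_\phi(\eab) - \dim\Sel_{\phih}(\eabh) = t_{A,B} + 1,\]
which is exactly the identity needed.

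The only delicate point is bookkeeping of the torsion factor and the sign convention for $c(\phi,E)$; a mistake there would shift the answer by one. To guard against this, I would check the formula against the remark after \Cref{prop:selmer ratio}, which states that the difference equals $-\omega(B) + \omega_1(A^3-27B)$. This matches $t_{A,B} + 1$ by \Cref{prop:selmer ratio}. Everything else follows formally from results already established.
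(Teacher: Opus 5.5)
Your proof is correct and follows essentially the same route as the paper. Both arguments combine rank--nullity for $M'_{A,B}$ and its transpose, the identity $\operatorname{nullity}(M'_{A,B})=\dim_{\F_3}\Sel_{\hat\phi}(\eabh)-1$, the dimension count $\ell-m=t_{A,B}+2$, and the Greenberg--Wiles consequence $\dim_{\F_3}\Sel_\phi(\eab)-\dim_{\F_3}\Sel_{\hat\phi}(\eabh)=t_{A,B}+1$; your torsion-factor bookkeeping and the cross-check against $-\omega(B)+\omega_1(A^3-27B)$ are both right.
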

	\begin{proof}
		Let $r=\operatorname{rank}(M'_{A,B})$. By \Cref{thm: matrix cols}, we may assume that $M'_{A,B}$ has dimensions $(m + t_{A,B}+2)\times m$ for some $m$. By \Cref{thm:kernel}, we have
		\[
		\dim_{\F_3}\Sel_{\phih}(\eabh)=\operatorname{nullity}(M_{A,B}) = \operatorname{nullity}(M'_{A,B}) + 1=m+1-r.
		\]
		On the other hand, $\operatorname{rank}((M'_{A,B})^{T})=r$, so
		\[
		\operatorname{nullity}((M'_{A,B})^{T})=m + t_{A,B}+2 - r.
		\]
		By \Cref{thm:greenberg-wiles} and \Cref{prop:selmer ratio},
		\[
		\dim_{\F_3}\Sel_\phi(\eab)-\dim_{\F_3}\Sel_{\phih}(\eabh)
		=1+t_{A,B}
		\]
		Hence, 
		\begin{align*}
			\dim_{\F_3}\Sel_\phi(\eab) &= \dim_{\F_3}\Sel_{\phih}(\eabh)+1+t_{A,B}\\
			&= (m + 1 - r) + 1 + t_{A,B}\\
			&= m + t_{A, B} + 2 - r\\
			&= \operatorname{nullity}((M'_{A,B})^{T}).
		\end{align*}
	\end{proof}
	
	\section{The random matrix model}\label{sec:conjectures}
	
	Fix an elliptic curve $\eab$, and suppose that $c(\phi, \eab) = 3^{t-2}$. Set $m = \omega(B) - 1$. The results of \Cref{sec:matrix} give a $(m+t)\times m$ matrix $\mab'$ over $\F_3$ whose nullspace encodes $\dSelpd/\langle[ B]\rangle$. 
	Let $\E_t^{\mat}$ denote the subset of $\E_t$ on which $\mab'$ is defined: we require that
	\begin{enumerate}
		\item $B, A^3-27B\notin\Q^{\times 3}$;
		\item $3\nmid AB$.
	\end{enumerate}
	Let $\E_{t, m}^{\mat}$ be the subset of $\E_t^{\mat}$ consisting of curves $\eab$ with $\omega(B) -1 =m$, and let $\E_{t,m}^{\mat}(X)$ be the curves in $\E_{t,m}^{\mat}$ with height at most $X$.

	We posit that, as $\eab$ varies over $\E_{t,m}^\mat$, these matrices are random. More precisely, we propose the following conjecture:
	
	\begin{conjecture}\label{conj:componentwise}
		Fix integers $t$ and $m$ with $m\ge 0$ and $m+t\ge 0$. Then for every matrix $M\in \M_{(m+t)\times m}(\F_3)$,
		\[\lim_{X\to\infty}\frac{\#\set{\eab\in\E_{t,m}^{\mat}(X) : \mab' = M}}{\#\E_{t,m}^{\mat}(X)} = 3^{-m(m+t)}.\]
	\end{conjecture}
	
	The definition of $\mab'$ requires fixed conventions for the orderings of the primes that index its rows and columns. Thus, a priori, the distribution of the matrices should depend on these conventions. However, since the uniform measure on a matrix space is preserved by any change-of-basis matrix, we therefore expect  \Cref{conj:componentwise} should hold with any natural choice of these conventions.
	
	Nullity, on the other hand, is intrinsically basis-independent, and summing over all matrices of nullity $d$ gives
	\[\lim_{X\to\infty}\frac{\#\set{\eab\in\E_{t,m}^{\mat}(X) : \nullity(\mab') = d}}{\#\E_{t,m}^{\mat}(X)} = \P_{t, m}(d),\]
	where $\max(0,-t)\le d \le m$ and
	\[\P_{t, m}(d) = 3^{-m(m+t)}\prod_{i = 0}^{m-d-1}\frac{(3^{m+t}-3^i)(3^m-3^i)}{3^{m-d}-3^i}\]
	is the probability that a matrix $M\in \M_{(m+t)\times m}(\F_3)$ has nullity $d$ \cite{fulman-goldstein}*{Sec.~3}.
	
	\Cref{conj:componentwise} suggests an explicit random matrix model that matches the philosophy of the introduction: after controlling for the visible arithmetic constraints within the family, the remaining distribution should be governed by linear algebra. 
	
	Assuming that the convergence in \Cref{conj:componentwise} is sufficiently uniform, the conjecture leads naturally to the height-ordered prediction of \Cref{conj:intro}, at least within $\E_t^{\mat}$. Indeed, for every $d\geq\max(0,-t)$, we have
	\begin{align*}
		&\frac{
			\#\set{\eab\in\E_t^{\mat}(X):
				\nullity(\mab')=d}
		}{
			\#\E_t^{\mat}(X)
		}\\
		&=
		\sum_{\substack{m\geq0\\m+t\geq0}}
		\frac{\#\E_{t,m}^{\mat}(X)}
		{\#\E_t^{\mat}(X)}\cdot
		\frac{
			\#\set{\eab\in\E_{t,m}^{\mat}(X):
				\nullity(\mab')=d}
		}{
			\#\E_{t,m}^{\mat}(X)
		}.
	\end{align*}
	For each fixed $m$, \Cref{conj:componentwise} predicts that the second factor tends to $\P_{t,m}(d)$. On the other hand, one expects $m=\omega(B)-1$ to tend to infinity in probability in the height-ordered family, even after conditioning on the global Selmer ratio. Since $\lim_{m\to\infty}\P_{t,m}(d)=\P_t(d)$, uniformity over the values of $m$ carrying most of the height-ordered mass would give
	\[
	\lim_{X\to\infty}
	\frac{
		\#\set{\eab\in\E_t^{\mat}(X):
			\nullity(\mab')=d}
	}{
		\#\E_t^{\mat}(X)
	}
	=\P_t(d).
	\]
	Since
	\[
	\nullity(\mab')
	=
	\dim_{\F_3}\dSelp-1,
	\]
	this is precisely the distribution asserted in \Cref{conj:intro}, restricted to $\E_t^{\mat}$. 
	
	It remains to compare $\E_t^{\mat}$ with the full family $\E_t$. There are two kinds of curves omitted from $\E_t^{\mat}$.  First, we remove the curves for which $B$ or $A^3-27B$ is a rational cube. These are the thin subfamilies with additional $3$-power isogenies considered in \Cref{lem:extra-isogenies}, and as discussed there, we expect them to be negligible in $\E_t$. Their additional isogenies introduce genuinely different arithmetic structure, so they should be regarded as exceptional subfamilies in the same way that $\E$ is an exceptional subfamily of the family of all elliptic curves over $\Q$.
	
	The second restriction, $3\nmid AB$, is different: we expect that curves with bad reduction at $3$ form a positive proportion of curves in $\E_t$. The reason we nevertheless expect them to have the same limiting distribution is that the obstruction is confined to the local condition at one fixed prime. Indeed, the local Selmer condition at $3$ lies in a cohomology group of bounded dimension, and thus should contribute a fixed small number of rows and columns to the matrix. Moreover, it is reasonable to expect that, after conditioning on the Selmer ratio, the image of the $3$-adic local Kummer map into $H^1(\Q_3,\mu_3)\cong\msc{\Q_3}$ is asymptotically independent of the residue-symbol conditions coming from the other primes. 
	The difference between the numbers of additional rows and columns is already encoded by the Selmer ratio. Thus, under this independence heuristic, the local condition at $3$ should amount to adjoining only a bounded number of random rows and columns to the matrix. Since the number of prime factors of $B$ and $A^3-27B$, and therefore the dimensions of $\mab$, typically grow with the height of $\eab$, the columns and rows coming from the bad reduction at $3$ should not change the limiting distribution.

	\section{Computational Experiments}\label{sec:results}
	
	\subsection{The Experiment Setup}
	
	Our experiments test the distributions of the matrices $\mab'$ of fixed dimensions, as proposed in \Cref{conj:componentwise}. If $\mab'$ has dimensions $a\times b$, then 
	\[b = \omega(B) - 1\qquad a = b+t, \]
	so fixing the dimensions of $\mab'$ is equivalent to fixing the parameter $t$ and the number of prime factors of $B$.
	
	Our first experiment was to compute the matrices $\mab'$ for all elliptic curves $\eab$ up to some given height $X$. However, this approach yielded matrices that were \emph{not} equidistributed, a fact that, as we explained in the introduction, was to be expected in hindsight. Indeed, at the heights we can reach computationally, fixing the matrix dimensions selects values of $B$ with unusually many small prime factors. We instead use two different sampling methods to model the behaviour for large heights. The first retains a height ordering, but sieves out elliptic curves whose discriminants have small prime factors. The second models the Swinnerton-Dyer ordering \cite{swinnerton-dyer-heuristics} by fixing $\omega(B)$ and drawing primes from a large pool. 
	
	\subsubsection{Sifted height method}
	
	Fix integers $h_0<h_1$ and a prime cutoff $K>3$. We compute $\mab'$ for every normalised pair $(A, B)$ satisfying:
	\begin{itemize}
		\item $h_0\le H(\eab) \le h_1$;
		\item no prime $p<K$ divides $B(A^3-27B)$;
		\item neither $B$ nor $A^3-27B$ is a cube.
	\end{itemize}
	
	The sieve removes the small primes most likely to dominate the cubic residue symbols, while the restriction to a height interval allows us to work at larger heights without first enumerating every smaller curve. This method exhausts all elliptic curves satisfying the given restrictions. However, due to the sieve, it is rare for $B$ and $A^3-27B$ to have many prime factors, so these experiments produce mostly small matrices.

	\subsubsection{Prime factor method}
	
	To study larger matrices, we use a second method inspired by the prime factor ordering of Swinnerton-Dyer in quadratic twist families \cite{swinnerton-dyer-heuristics}. Rather than ordering the twisting parameter by its size, Swinnerton-Dyer instead fixes its number of prime factors and allows those primes to vary. In Swinnerton-Dyer's setting, the twisting parameter determines the local Selmer conditions in the family. In our setting, that role is played by the integers $B$ and $A^3-27B$. In our experiment, we follow a similar idea by fixing $\omega(B)$ and choosing the prime divisors of $B$ from a large finite pool.
	
	Fix positive integers $N$ and $n$, and let $\mathcal{P}_N$ be the set of the first $N$ primes greater than $3$. We choose $n$ distinct primes $p_1, \ldots, p_n\in \mathcal P_N$ uniformly without replacement. For each $p_i$, independently, we choose a positive exponent $e_i$ with probability
	\[\Prob(e_i = k) = \frac{1}{p_i^{k-1}}-\frac 1{p_i^k}\]
	for each $k\ge 1$, i.e.\ the probability that a random integer $d$ is divisible by $p_i^{k}$ but not $p_i^{k+1}$ given that it is divisible by $p_i$. We then set
	\[B = \prod_{i = 1}^np_i^{e_i}.\]
	If $B$ is a cube, we discard it and restart.
	
	Next, we select $|A|$ uniformly so that it is not divisible by $3$ and its height is approximately that of $B$, i.e.
	\[0.9\sqrt[3]B \le |A| < 1.1\sqrt[3]B,\]
	and we choose the sign of $A$ at random. If the resulting pair $(A, B)$ would not be normalised, we choose a new $A$.
	If $A^3-27B$ is a cube, we discard the pair and restart.
	
	\begin{remark}\label{rem:ast}
		When $n = 2$, the interval $(0.9\sqrt[3]B, 1.1\sqrt[3]B)$ frequently contains no integers that are coprime to $3$. Thus, when $n = 2$, we instead used the entire range $A\in (-1.1\sqrt[3]B, 1.1\sqrt[3]B)$. These experiments are marked with an asterisk.
	\end{remark}
	
	Since $\omega(B) = n$, the reduced matrix $\mab'$ has $n-1$ columns, while its number of rows can vary. We only retain matrices with at most $12$ entries. This cutoff is computational: there are $3^{ab}$ possible $a\times b$ matrices over $\F_3$, so gathering enough data to test equidistribution is impractical when $ab$ is too large.

	\subsection{Statistical tests}
	
	For each experiment, we organise our results into blocks according to the dimensions $a\times b$ of $\mab'$. Within a fixed block, let $\Count(M)$ be the number of occurrences of $M\in\M_{a\times b}(\F_3)$, and let 
	\[\Total = \sum_{M\in\M_{a\times b}(\F_3)}\Count(M)\]
	be the total number of matrices in the block. There are $3^{ab}$ possible matrices and the expected count is $\Total/3^{ab}$. 
	
	We record the observed standard deviation
	\[\stdev_{\obs} =\sqrt{\frac{1}{3^{ab}}\sum_{M\in\M_{a\times b}(\F_3)}\br{\Count(M) - \frac{\Total}{3^{ab}}}^2}\]
	and compare it with the standard deviation of the uniform multinomial distribution
	\[\stdev_{\unif} = \sqrt{\frac{\Total}{3^{ab}}\br{1-\frac1{3^{ab}}}}.\]
	We call $\frac{\stdev_{\obs}}{\stdev_{\unif}}$ the standard deviation ratio. For independent uniform draws, its square has expectation $1$.
	
	We also record the mean squared error of the observed probabilities
	\[\MSE = \frac{1}{3^{ab}}\sum_{M\in\M_{a\times b}(\F_3)}\br{\frac{\Count(M)}{\Total}-\frac{1}{3^{ab}}}^2,\]
	the minimum and maximum counts, and their relative deviations from $\Total/3^{ab}$.
	
	\begin{remark}
		We have
		\[\MSE = \frac{\stdev_{\obs}^2}{\Total^2}.\]
		and
		\[\br{\frac{\stdev_{\obs}}{\stdev_{\unif}}}^2
		=\frac{\Total\,3^{2ab}}{3^{ab}-1}\MSE.\]
		The mean squared error measures the deviation in probability, while the standard deviation error also depends on $\Total$ and $3^{ab}$. Therefore the standard deviation ratio cannot be used to compare experiments with very different values of $\Total$ and $3^{ab}$. 
	\end{remark}
	
	Finally, we record the maximal entry discrepancy,
	\[\max_{i,j,u}\abs{\Prob_{\obs}(M_{ij} = u) -\frac13},\]
	the largest bias visible in any single matrix entry; and the maximal rank discrepancy
	\[\max_r\abs{\Prob_{\obs}(\rk M = r) - \Prob_{\unif}(\rk M = r)},\]
	which measures the extent to which any deviation from uniformity affects the rank distribution.
	
	Complete statistics for each experiment are given in \Cref{appendix}. The raw data is available at our at our \href{https://github.com/dcz0711/3-Selmer-Groups/}{Github repository}.
	
	\begin{remark}
		We do not use Pearson's $\chi^2$ test, which is the standard goodness-of-fit test for the exact uniform distribution. Its statistic is given by
		\[\chi^2 = \sum_{M\in\M_{a\times b}(\F_3)}\frac{(\Count(M)-\frac{\Total}{3^{ab}})^2}{\frac{\Total}{3^{ab}}}.\]
		Set
		\[p_M(X) = \Prob_{\eab\in\E_{t,m}^{\mat}(X)}( \mab' = M)\]
		and let
		\[\epsilon(X) = \max_{M\in\M_{(m+t)\times m}(\F_3)}\abs{p_M(X) - 3^{-m(m+t)}}.\]
		\Cref{conj:componentwise} asserts only that $\epsilon(X)\to 0$ as $X\to\infty$. Now suppose that $\Total$ matrices are drawn independently from a distribution satisfying $\epsilon(X)\to 0$. Then
		\[\EE[\chi^2] = (3^{m(m+t)} - 1)+(\Total - 1) \br{\sum_{M\in\M_{(m+t)\times m}(\F_3)}\frac{\br{p_M(X)-3^{-m(m+t)}}^2}{3^{-m(m+t)}}} \asymp_{m,t} 1+ \Total\epsilon(X)^2. \]
		Hence, if $\Total\epsilon(X)^2\to\infty$, then the $\chi^2$ statistic will diverge even though $\epsilon(X)\to 0$.
		In the setting of \Cref{conj:componentwise}, there is no reason to expect that $\epsilon(X) = O(\Total^{-1/2})$. As noted in the introduction, the analogous work of Kane--Klagsbrun contains error terms that decay only as powers of $\log\log X$ \cite{kane-klagsbrun}. The same objection applies to other goodness-of-fit tests. We therefore report the sizes of the observed discrepancies rather than formal $p$-values.
	\end{remark}

	\subsection{Analysis of Results}
	
	\subsubsection{Sifted height experiments}
	
	The parameters and total numbers of curves in the sifted height experiments
	are given in \Cref{tab:height-experiments}. A matrix $\mab'$ is trivial if one of its rows or columns is zero-dimensional.
	
	\begin{table}[!h]
		\centering
		\caption{Parameters and sample sizes for the sifted height experiments.}
		\label{tab:height-experiments}
		\scriptsize
		\resizebox{\textwidth}{!}{%
			\begin{tabular}{llrrr}
				\toprule
				ID & Parameters & Total generated & Trivial & Non-trivial \\
				\midrule
				H-1000-100 & $1000^3\le H(\eab)\le1010^3$, good reduction at all primes $<100$ & $1{,}244{,}605{,}158$ & $743{,}432{,}900$ & $501{,}172{,}258$ \\
				H-1000-300 & $1000^3\le H(\eab)\le1010^3$, good reduction at all primes $<300$ & $816{,}536{,}769$ & $552{,}408{,}952$ & $264{,}127{,}817$ \\
				H-1000-500 & $1000^3\le H(\eab)\le1010^3$, good reduction at all primes $<500$ & $690{,}451{,}279$ & $491{,}069{,}894$ & $199{,}381{,}385$ \\
				H-1000-800 & $1000^3\le H(\eab)\le1010^3$, good reduction at all primes $<800$ & $603{,}045{,}814$ & $445{,}919{,}536$ & $157{,}126{,}278$ \\
				H-1200 & $1200^3\le H(\eab)\le1300^3$, good reduction at all primes $<500$ & $12{,}638{,}762{,}546$ & $8{,}816{,}993{,}327$ & $3{,}821{,}769{,}219$ \\
				H-2250 & $2250^3\le H(\eab)\le2260^3$, good reduction at all primes $<2000$ & $5{,}267{,}460{,}247$ & $3{,}928{,}485{,}994$ & $1{,}338{,}974{,}253$ \\
				H-3250 & $3250^3\le H(\eab)\le3260^3$, good reduction at all primes $<1000$ & $19{,}442{,}965{,}506$ & $13{,}390{,}100{,}389$ & $6{,}052{,}865{,}117$ \\
				\bottomrule
		\end{tabular}}
	\end{table}
	
	Experiment H-3250 is representative of the sifted height data as a whole. It exhausts all elliptic curves $\eab$ such that
	\[3250^3\le H(\eab) \le 3260^3\]
	for which no prime below $1000$ divides $B(A^3-27B)$.
	
	\begin{table}[htbp]
		\centering
		\caption{Statistics for the H-3250 experiment.}
		\label{tab:H3250-statistics}
		\scriptsize
		\resizebox{\textwidth}{!}{%
			\begin{tabular}{crrrrrrrrrrrrr}
				\toprule
				\makecell[c]{Dimensions}
				& \makecell[c]{Total generated}
				& \makecell[c]{Possible matrices}
				& \makecell[c]{Mean count}
				& \makecell[c]{Observed\\std.\ dev.}
				& \makecell[c]{Uniform\\std.\ dev.}
				& \makecell[c]{SD\\ratio}
				& \makecell[c]{Minimum}
				& \makecell[c]{\% dev.}
				& \makecell[c]{Maximum}
				& \makecell[c]{\% dev.}
				& \makecell[c]{$\MSE$}
				& \makecell[c]{Max. entry\\discrepancy}
				& \makecell[c]{Max. rank\\discrepancy} \\
				\midrule
				$1\times1$
				& $4{,}427{,}036{,}888$
				& $3$
				& $1{,}475{,}678{,}962.67$
				& $575{,}557.64$
				& $31{,}365.36$
				& $18.350$
				& $1{,}474{,}865{,}895$
				& $-0.06\%$
				& $1{,}476{,}118{,}521$
				& $+0.03\%$
				& $1.69\mathbin{\cdot}10^{-8}$
				& $1.84\mathbin{\cdot}10^{-4}$
				& $1.84\mathbin{\cdot}10^{-4}$ \\
				
				$2\times1$
				& $1{,}292{,}105{,}782$
				& $9$
				& $143{,}567{,}309.11$
				& $194{,}822.13$
				& $11{,}296.70$
				& $17.246$
				& $143{,}283{,}441$
				& $-0.20\%$
				& $143{,}716{,}189$
				& $+0.10\%$
				& $2.27\mathbin{\cdot}10^{-8}$
				& $6.39\mathbin{\cdot}10^{-4}$
				& $2.17\mathbin{\cdot}10^{-4}$ \\
				
				$3\times1$
				& $66{,}615{,}066$
				& $27$
				& $2{,}467{,}224.67$
				& $8{,}144.83$
				& $1{,}541.38$
				& $5.284$
				& $2{,}450{,}982$
				& $-0.66\%$
				& $2{,}480{,}888$
				& $+0.55\%$
				& $1.49\mathbin{\cdot}10^{-8}$
				& $1.12\mathbin{\cdot}10^{-3}$
				& $1.51\mathbin{\cdot}10^{-4}$ \\
				
				$1\times2$
				& $203{,}960{,}440$
				& $9$
				& $22{,}662{,}271.11$
				& $26{,}159.53$
				& $4{,}488.23$
				& $5.828$
				& $22{,}615{,}982$
				& $-0.20\%$
				& $22{,}704{,}317$
				& $+0.19\%$
				& $1.65\mathbin{\cdot}10^{-8}$
				& $3.61\mathbin{\cdot}10^{-4}$
				& $2.27\mathbin{\cdot}10^{-4}$ \\
				
				$2\times2$
				& $59{,}973{,}424$
				& $81$
				& $740{,}412.64$
				& $3{,}300.93$
				& $855.14$
				& $3.860$
				& $731{,}360$
				& $-1.22\%$
				& $746{,}325$
				& $+0.80\%$
				& $3.03\mathbin{\cdot}10^{-9}$
				& $1.295\mathbin{\cdot}10^{-3}$
				& $5.85\mathbin{\cdot}10^{-4}$ \\
				
				$3\times2$
				& $3{,}173{,}517$
				& $729$
				& $4{,}353.25$
				& $82.17$
				& $65.93$
				& $1.246$
				& $4{,}074$
				& $-6.41\%$
				& $4{,}597$
				& $+5.60\%$
				& $6.70\mathbin{\cdot}10^{-10}$
				& $3.062\mathbin{\cdot}10^{-3}$
				& $5.86\mathbin{\cdot}10^{-4}$ \\
				\bottomrule
		\end{tabular}}
	\end{table}
	
	In terms of observed probabilities, the data is extremely close to uniform. In the three largest blocks, every individual matrix occurs within $0.21\%$ of its uniform prediction, and the maximum rank discrepancies are all within $6\times 10^{-4}$. At the same time, the large standard deviation ratios are much larger than $1$, so the counts do not have the same fluctuations as independent uniform draws. With hundreds of millions of matrices, a very small departure from uniformity is still easy to detect. Experiment H-3250 is consistent with convergence to the uniform distribution, but the heights we can reach are not yet large enough for the matrices to behave truly independently. 
	
	The data also supports the claim made in the introduction that, at the heights we can reach computationally, small primes have a disproportionate influence on the distribution. Because the four H-1000 experiments use the same height interval, they form a nested sequence, and thus allow us to analyse which curves contribute most to the failure of equidistribution. For example, by subtracting the counts of experiment H-1000-300 from the counts of experiment H-1000-100, we can compare the curves with bad reduction at some prime $100\le p<300$ with those with good reduction at all primes $p<300$. After stratifying by the smallest prime of bad reduction, we summarise the data for these experiments in \Cref{tab:height-sieve-comparison}. Complete statistics for all the sifted height experiments are given in \Cref{tab:full-height-statistics}.
	
	In each of the five largest blocks, the matrices with $100\le p_{\min}<300$ are substantially less uniform than those with $300\le p_{\min}<500$, which are in turn less uniform than those with $500\le p_{\min}<800$. This improvement appears simultaneously in nearly all our metrics. In contrast, the $p_{\min}\ge 800$ range shows no consistent further improvement in the well-populated blocks. Thus, the failure of equidistribution is not uniform across the entire height range: it is concentrated disproportionately among the curves with small primes of bad reduction.
	
	The results of the remaining experiments, H-1200 and H-2250, are consistent with this overall picture. Their complete statistics are recorded in \Cref{tab:full-height-statistics}.
	
	\subsubsection{Prime factor experiments}
	
	The prime factor experiments are listed in
	\Cref{tab:factor-experiments}. We call a matrix trivial if one of its dimensions is zero. The asterisks on the experiments with $n=2$
	are explained in \Cref{rem:ast}. 
	
	\begin{table}[!h]
		
		\caption{Distribution statistics for experiment F10k-7.}
		\label{tab:F10k7-statistics}
		\scriptsize
		\resizebox{\textwidth}{!}{%
			\begin{tabular}{crrrrrrrrrrrrr}
				\toprule
				\makecell[c]{Dimensions}
				& \makecell[c]{Total generated}
				& \makecell[c]{Possible matrices}
				& \makecell[c]{Mean count}
				& \makecell[c]{Observed\\std.\ dev.}
				& \makecell[c]{Uniform\\std.\ dev.}
				& \makecell[c]{SD\\ratio}
				& \makecell[c]{Minimum}
				& \makecell[c]{\% dev.}
				& \makecell[c]{Maximum}
				& \makecell[c]{\% dev.}
				& \makecell[c]{$\MSE$}
				& \makecell[c]{Max. entry\\discrepancy}
				& \makecell[c]{Max. rank\\discrepancy} \\
				\midrule
				$1\times6$
				& $51{,}722{,}830$
				& $729$
				& $70{,}950.38$
				& $291.67$
				& $266.18$
				& $1.096$
				& $70{,}104$
				& $-1.19\%$
				& $71{,}706$
				& $+1.06\%$
				& $3.18\mathbin{\cdot}10^{-11}$
				& $6.133\mathbin{\cdot}10^{-4}$
				& $6.832\mathbin{\cdot}10^{-6}$ \\
				
				$2\times6$
				& $52{,}656{,}296$
				& $531{,}441$
				& $99.08$
				& $9.97$
				& $9.95$
				& $1.002$
				& $58$
				& $-41.46\%$
				& $150$
				& $+51.39\%$
				& $3.59\mathbin{\cdot}10^{-14}$
				& $1.075\mathbin{\cdot}10^{-3}$
				& $2.283\mathbin{\cdot}10^{-5}$ \\
				\bottomrule
		\end{tabular}}
	\end{table}

	\begin{table}[b]
		\caption{Distribution statistics for the H-1000 samples stratified by the smallest prime divisor $p_{\min}$ of $B(A^3-27B)$. The minimum and
			maximum deviations are measured relative to the uniform mean
			$\Total/3^{ab}$.}
		\label{tab:height-sieve-comparison}
		\centering
		\scriptsize
		\resizebox{\textwidth}{!}{%
			\begin{tabular}{ccrrrrrrr}
				\toprule
				\makecell[c]{\scriptsize Condition on\\[-1pt]\scriptsize $p_{\min}$}
				& \makecell[c]{\scriptsize Dimensions}
				& \makecell[c]{\scriptsize Total\\[-1pt]\scriptsize generated}
				& \makecell[c]{\scriptsize Min.\ \% dev.}
				& \makecell[c]{\scriptsize Max.\ \% dev.}
				& \makecell[c]{\scriptsize $\stdev$ ratio}
				& \makecell[c]{\scriptsize $\MSE$}
				& \makecell[c]{\scriptsize Max.\ entry\\[-1pt]\scriptsize discrepancy}
				& \makecell[c]{\scriptsize Max.\ rank\\[-1pt]\scriptsize discrepancy} \\
				\midrule

				$100\leq p_{\min}<300$
				& $1\times1$ & $108{,}239{,}046$ & $-1.00\%$ & $+0.58\%$
				& $52.043$ & $5.56\cdot10^{-6}$ & $3.319\cdot10^{-3}$ & $3.319\cdot10^{-3}$ \\
				& $2\times1$ & $52{,}345{,}364$ & $-2.87\%$ & $+1.58\%$
				& $44.604$ & $3.75\cdot10^{-6}$ & $8.083\cdot10^{-3}$ & $3.187\cdot10^{-3}$ \\
				& $3\times1$ & $8{,}211{,}532$ & $-5.41\%$ & $+2.94\%$
				& $13.623$ & $8.06\cdot10^{-7}$ & $9.935\cdot10^{-3}$ & $2.005\cdot10^{-3}$ \\
				& $1\times2$ & $47{,}060{,}163$ & $-2.02\%$ & $+1.16\%$
				& $24.628$ & $1.27\cdot10^{-6}$ & $4.031\cdot10^{-3}$ & $2.244\cdot10^{-3}$ \\
				& $2\times2$ & $18{,}229{,}199$ & $-7.34\%$ & $+4.15\%$
				& $14.731$ & $1.45\cdot10^{-7}$ & $1.205\cdot10^{-2}$ & $5.148\cdot10^{-3}$ \\
				& $3\times2$ & $2{,}116{,}260$ & $-18.67\%$ & $+11.89\%$
				& $3.155$ & $6.44\cdot10^{-9}$ & $1.940\cdot10^{-2}$ & $5.339\cdot10^{-3}$ \\
				\addlinespace

				$300\leq p_{\min}<500$
				& $1\times1$ & $36{,}093{,}824$ & $-0.61\%$ & $+0.39\%$
				& $18.473$ & $2.10\cdot10^{-6}$ & $2.024\cdot10^{-3}$ & $2.024\cdot10^{-3}$ \\
				& $2\times1$ & $15{,}663{,}950$ & $-1.68\%$ & $+1.06\%$
				& $15.827$ & $1.58\cdot10^{-6}$ & $5.281\cdot10^{-3}$ & $1.869\cdot10^{-3}$ \\
				& $3\times1$ & $1{,}839{,}393$ & $-3.27\%$ & $+2.27\%$
				& $4.458$ & $3.85\cdot10^{-7}$ & $6.653\cdot10^{-3}$ & $1.192\cdot10^{-3}$ \\
				& $1\times2$ & $8{,}301{,}120$ & $-1.01\%$ & $+0.67\%$
				& $5.578$ & $3.70\cdot10^{-7}$ & $2.102\cdot10^{-3}$ & $1.122\cdot10^{-3}$ \\
				& $2\times2$ & $2{,}659{,}052$ & $-5.93\%$ & $+3.65\%$
				& $4.104$ & $7.72\cdot10^{-8}$ & $8.434\cdot10^{-3}$ & $3.725\cdot10^{-3}$ \\
				& $3\times2$ & $187{,}620$ & $-20.35\%$ & $+25.50\%$
				& $1.229$ & $1.10\cdot10^{-8}$ & $1.709\cdot10^{-2}$ & $3.046\cdot10^{-3}$ \\
				\addlinespace

				$500\leq p_{\min}<800$
				& $1\times1$ & $27{,}133{,}678$ & $-0.15\%$ & $+0.20\%$
				& $5.327$ & $2.32\cdot10^{-7}$ & $6.502\cdot10^{-4}$ & $5.025\cdot10^{-4}$ \\
				& $2\times1$ & $10{,}395{,}320$ & $-0.47\%$ & $+0.67\%$
				& $4.817$ & $2.20\cdot10^{-7}$ & $1.914\cdot10^{-3}$ & $3.792\cdot10^{-4}$ \\
				& $3\times1$ & $940{,}791$ & $-1.67\%$ & $+1.61\%$
				& $1.922$ & $1.40\cdot10^{-7}$ & $3.113\cdot10^{-3}$ & $3.105\cdot10^{-4}$ \\
				& $1\times2$ & $2{,}924{,}964$ & $-0.51\%$ & $+0.52\%$
				& $1.835$ & $1.14\cdot10^{-7}$ & $9.210\cdot10^{-4}$ & $5.631\cdot10^{-4}$ \\
				& $2\times2$ & $823{,}226$ & $-4.03\%$ & $+3.99\%$
				& $1.603$ & $3.81\cdot10^{-8}$ & $4.064\cdot10^{-3}$ & $2.446\cdot10^{-3}$ \\
				& $3\times2$ & $37{,}128$ & $-33.24\%$ & $+59.04\%$
				& $1.005$ & $3.73\cdot10^{-8}$ & $1.2336\cdot10^{-2}$ & $2.819\cdot10^{-3}$ \\
				\addlinespace
				$p_{\min}\geq800$
				& $1\times1$ & $124{,}309{,}856$ & $-0.11\%$ & $+0.08\%$
				& $6.260$ & $7.01\cdot10^{-8}$ & $3.64\cdot10^{-4}$ & $3.64\cdot10^{-4}$ \\
				& $2\times1$ & $31{,}604{,}006$ & $-0.50\%$ & $+0.41\%$
				& $6.739$ & $1.42\cdot10^{-7}$ & $1.512\cdot10^{-3}$ & $3.93\cdot10^{-4}$ \\
				& $3\times1$ & $825{,}049$ & $-2.14\%$ & $+2.07\%$
				& $1.904$ & $1.57\cdot10^{-7}$ & $2.678\cdot10^{-3}$ & $3.11\cdot10^{-4}$ \\
				& $1\times2$ & $306{,}603$ & $-0.96\%$ & $+1.36\%$
				& $1.402$ & $6.33\cdot10^{-7}$ & $2.635\cdot10^{-3}$ & $7.37\cdot10^{-4}$ \\
				& $2\times2$ & $78{,}666$ & $-8.87\%$ & $+9.35\%$
				& $1.113$ & $1.92\cdot10^{-7}$ & $7.589\cdot10^{-3}$ & $4.565\cdot10^{-3}$ \\
				& $3\times2$ & $2{,}098$ & $-100.00\%$ & $+247.47\%$
				& $1.010$ & $6.66\cdot10^{-7}$ & $1.7795\cdot10^{-2}$ & $7.713\cdot10^{-3}$ \\
				\bottomrule
		\end{tabular}}
	\end{table}
	
	The factor method allows us to test substantially larger matrix spaces than the sifted height method. Experiment F10k-7 is informative. It sampled $124{,}800{,}000$ elliptic curves $\eab$ with $\omega(B) = 7$, and with the prime factors of $B$ drawn from the first $10{,}000$ primes $>3$. There were $104{,}379{,}126$ non-trivial matrices, separated into two blocks. The data is shown in \Cref{tab:F10k7-statistics}.

	\begin{table}[t]
		\vspace{-30pt}
		\centering
		\caption{Parameters and sample sizes for the prime factor experiments.}
		\label{tab:factor-experiments}
		\scriptsize
		\begin{tabular*}{\textwidth}{@{\extracolsep{\fill}}lrrrrr@{}}
			\toprule
			\multicolumn{1}{c}{ID}
			& \multicolumn{1}{c}{$N$}
			& \multicolumn{1}{c}{$n$}
			& \multicolumn{1}{c}{Total generated}
			& \multicolumn{1}{c}{Trivial}
			& \multicolumn{1}{c}{Non-trivial} \\
			\midrule
			F1k-2$^\ast$ & $1{,}000$ & $2$ & $100{,}000{,}000$ & $24{,}349{,}718$ & $75{,}650{,}282$ \\
			F10k-2$^\ast$ & $10{,}000$ & $2$ & $100{,}000{,}000$ & $21{,}522{,}121$ & $78{,}477{,}879$ \\
			F100k-2$^\ast$ & $100{,}000$ & $2$ & $100{,}000{,}000$ & $19{,}558{,}563$ & $80{,}441{,}437$ \\
			\addlinespace
			F1k-3 & $1{,}000$ & $3$ & $100{,}000{,}000$ & $20{,}452{,}903$ & $79{,}547{,}097$ \\
			F10k-3 & $10{,}000$ & $3$ & $100{,}000{,}000$ & $17{,}982{,}895$ & $82{,}017{,}105$ \\
			F100k-3 & $100{,}000$ & $3$ & $100{,}000{,}000$ & $16{,}269{,}254$ & $83{,}730{,}746$ \\
			\addlinespace
			F1k-4 & $1{,}000$ & $4$ & $100{,}000{,}000$ & $18{,}038{,}420$ & $81{,}961{,}580$ \\
			F10k-4 & $10{,}000$ & $4$ & $110{,}000{,}000$ & $17{,}444{,}654$ & $92{,}555{,}346$ \\
			F100k-4 & $100{,}000$ & $4$ & $100{,}000{,}000$ & $14{,}382{,}324$ & $85{,}617{,}676$ \\
			\addlinespace
			F1k-5 & $1{,}000$ & $5$ & $100{,}000{,}000$ & $16{,}902{,}483$ & $83{,}097{,}517$ \\
			F10k-5 & $10{,}000$ & $5$ & $100{,}780{,}000$ & $15{,}170{,}957$ & $85{,}609{,}043$ \\
			F100k-5 & $100{,}000$ & $5$ & $100{,}000{,}000$ & $13{,}807{,}435$ & $86{,}192{,}565$ \\
			\addlinespace
			F1k-6 & $1{,}000$ & $6$ & $100{,}000{,}000$ & $18{,}560{,}765$ & $81{,}439{,}235$ \\
			F10k-6 & $10{,}000$ & $6$ & $100{,}000{,}000$ & $17{,}085{,}873$ & $82{,}914{,}127$ \\
			F100k-6 & $100{,}000$ & $6$ & $100{,}000{,}000$ & $16{,}067{,}177$ & $83{,}932{,}823$ \\
			\addlinespace
			F10k-7 & $10{,}000$ & $7$ & $124{,}800{,}000$ & $20{,}420{,}874$ & $104{,}379{,}126$ \\
			\addlinespace
			F1k-8 & $1{,}000$ & $8$ & $100{,}000{,}000$ & $28{,}910{,}930$ & $71{,}089{,}070$ \\
			\addlinespace
			F1k-9 & $1{,}000$ & $9$ & $100{,}000{,}000$ & $28{,}436{,}401$ & $71{,}563{,}599$ \\
			\addlinespace
			F1k-10 & $1{,}000$ & $10$ & $100{,}000{,}000$ & $28{,}037{,}830$ & $71{,}962{,}170$ \\
			\addlinespace
			F1k-11 & $1{,}000$ & $11$ & $100{,}000{,}000$ & $27{,}668{,}521$ & $72{,}331{,}479$ \\
			\addlinespace
			F1k-12 & $1{,}000$ & $12$ & $100{,}000{,}000$ & $27{,}351{,}931$ & $72{,}648{,}069$ \\
			\midrule
			\multicolumn{3}{r}{Total} & $2{,}135{,}580{,}000$ & $428{,}422{,}029$ & $1{,}707{,}157{,}971$ \\
			\bottomrule
		\end{tabular*}
		
		\vspace{10pt}
		\caption{Representative blocks from the  prime factor experiments with $n$ fixed and $N$ varying. The
			minimum and maximum deviations are measured relative to the uniform mean.}
		\label{tab:factor-pool-comparison}
		\scriptsize
		\resizebox{\textwidth}{!}{%
			\begin{tabular}{ccrrrrrrrr}
				\toprule
				\makecell[c]{$n$}
				& \makecell[c]{Dimensions}
				& \makecell[c]{$N$}
				& \makecell[c]{Samples}
				& \makecell[c]{Min.\ \% dev.}
				& \makecell[c]{Max.\ \% dev.}
				& \makecell[c]{$\stdev$ ratio}
				& \makecell[c]{$\MSE$}
				& \makecell[c]{Max.\ entry\\discrepancy}
				& \makecell[c]{Max.\ rank\\discrepancy} \\
				\midrule
				$2^\ast$ & $2\times1$ & $1{,}000$ & $25{,}099{,}773$
				& $-1.99\%$ & $+1.53\%$
				& $22.685$ & $2.02\mathbin{\cdot}10^{-6}$
				& $5.685\mathbin{\cdot}10^{-3}$ & $1.597\mathbin{\cdot}10^{-3}$ \\
				& & $10{,}000$ & $27{,}317{,}390$
				& $-0.40\%$ & $+0.32\%$
				& $4.395$ & $6.98\mathbin{\cdot}10^{-8}$
				& $8.991\mathbin{\cdot}10^{-4}$ & $6.833\mathbin{\cdot}10^{-5}$ \\
				& & $100{,}000$ & $28{,}635{,}104$
				& $-0.10\%$ & $+0.10\%$
				& $1.212$ & $5.07\mathbin{\cdot}10^{-9}$
				& $2.384\mathbin{\cdot}10^{-4}$ & $1.228\mathbin{\cdot}10^{-5}$ \\
				\addlinespace
				
				$3$ & $2\times2$ & $1{,}000$ & $28{,}047{,}255$
				& $-3.62\%$ & $+1.96\%$
				& $7.454$ & $2.42\mathbin{\cdot}10^{-8}$
				& $4.360\mathbin{\cdot}10^{-3}$ & $2.693\mathbin{\cdot}10^{-3}$ \\
				& & $10{,}000$ & $29{,}538{,}488$
				& $-0.64\%$ & $+0.49\%$
				& $1.548$ & $9.89\mathbin{\cdot}10^{-10}$
				& $4.857\mathbin{\cdot}10^{-4}$ & $3.689\mathbin{\cdot}10^{-4}$ \\
				& & $100{,}000$ & $30{,}339{,}126$
				& $-0.33\%$ & $+0.32\%$
				& $0.972$ & $3.80\mathbin{\cdot}10^{-10}$
				& $2.266\mathbin{\cdot}10^{-4}$ & $3.713\mathbin{\cdot}10^{-5}$ \\
				\addlinespace
				
				$3$ & $3\times2$ & $1{,}000$ & $9{,}070{,}355$
				& $-10.92\%$ & $+6.32\%$
				& $2.506$ & $9.49\mathbin{\cdot}10^{-10}$
				& $4.480\mathbin{\cdot}10^{-3}$ & $2.906\mathbin{\cdot}10^{-3}$ \\
				& & $10{,}000$ & $11{,}410{,}675$
				& $-2.65\%$ & $+3.05\%$
				& $1.080$ & $1.40\mathbin{\cdot}10^{-10}$
				& $7.249\mathbin{\cdot}10^{-4}$ & $4.430\mathbin{\cdot}10^{-4}$ \\
				& & $100{,}000$ & $13{,}156{,}873$
				& $-2.18\%$ & $+2.52\%$
				& $0.958$ & $9.55\mathbin{\cdot}10^{-11}$
				& $2.909\mathbin{\cdot}10^{-4}$ & $2.372\mathbin{\cdot}10^{-5}$ \\
				\addlinespace
				$4$ & $2\times3$ & $1{,}000$ & $29{,}630{,}608$
				& $-5.70\%$ & $+3.03\%$
				& $2.998$ & $4.16\mathbin{\cdot}10^{-10}$
				& $4.856\mathbin{\cdot}10^{-3}$ & $1.745\mathbin{\cdot}10^{-3}$ \\
				& & $10{,}000$ & $33{,}815{,}155$
				& $-1.57\%$ & $+1.88\%$
				& $1.174$ & $5.58\mathbin{\cdot}10^{-11}$
				& $7.711\mathbin{\cdot}10^{-4}$ & $3.580\mathbin{\cdot}10^{-4}$ \\
				& & $100{,}000$ & $31{,}318{,}687$
				& $-1.56\%$ & $+1.43\%$
				& $0.991$ & $4.29\mathbin{\cdot}10^{-11}$
				& $1.700\mathbin{\cdot}10^{-4}$ & $6.354\mathbin{\cdot}10^{-5}$ \\
				\addlinespace
				$5$ & $2\times4$ & $1{,}000$ & $31{,}593{,}318$
				& $-7.52\%$ & $+6.68\%$
				& $1.497$ & $1.08\mathbin{\cdot}10^{-11}$
				& $5.417\mathbin{\cdot}10^{-3}$ & $7.541\mathbin{\cdot}10^{-4}$ \\
				& & $10{,}000$ & $33{,}141{,}355$
				& $-5.87\%$ & $+5.14\%$
				& $1.040$ & $4.98\mathbin{\cdot}10^{-12}$
				& $9.651\mathbin{\cdot}10^{-4}$ & $1.426\mathbin{\cdot}10^{-4}$ \\
				& & $100{,}000$ & $33{,}668{,}895$
				& $-5.14\%$ & $+6.16\%$
				& $0.999$ & $4.52\mathbin{\cdot}10^{-12}$
				& $1.950\mathbin{\cdot}10^{-4}$ & $1.950\mathbin{\cdot}10^{-5}$ \\
				\addlinespace
				
				$6$ & $2\times5$ & $1{,}000$ & $38{,}339{,}437$
				& $-17.45\%$ & $+19.05\%$
				& $1.098$ & $5.33\mathbin{\cdot}10^{-13}$
				& $5.655\mathbin{\cdot}10^{-3}$ & $2.789\mathbin{\cdot}10^{-4}$ \\
				& & $10{,}000$ & $40{,}899{,}399$
				& $-15.83\%$ & $+15.93\%$
				& $1.004$ & $4.17\mathbin{\cdot}10^{-13}$
				& $9.322\mathbin{\cdot}10^{-4}$ & $3.463\mathbin{\cdot}10^{-5}$ \\
				& & $100{,}000$ & $42{,}714{,}501$
				& $-15.53\%$ & $+16.68\%$
				& $0.997$ & $3.94\mathbin{\cdot}10^{-13}$
				& $2.365\mathbin{\cdot}10^{-4}$ & $1.529\mathbin{\cdot}10^{-5}$ \\
				\bottomrule
		\end{tabular}}
	\end{table}
	
	The $2\times 6$ block is particularly informative, since it is the largest matrix space we considered. Every possible matrix occurs, the maximal entry and rank discrepancies are tiny, and the observed standard deviation is almost exactly equal to the uniform standard deviation. The wide relative range between the maximum and minimum counts is compatible with uniform sampling over a large matrix space of size $3^{12}$. The largest count is around $5.11$ standard deviations from the mean. In the uniform multinomial distribution, the count of any fixed matrix is approximately normal, so the expected number of matrices whose counts exceed $k$ standard deviations from the mean is on the order of $3^{12}e^{-k^2/2}$. This becomes $1$ when $\sqrt{2\log 3^{12}}\approx 5.13$, so fluctuations of the observed size are expected in a matrix space this large.
	
	The prime factor experiments also allow us to test the convergence in \Cref{conj:componentwise}. Fixing the matrix dimensions $(m+t)\times m$ is equivalent to fixing $n = \omega(B)$ and the global Selmer ratio, and \Cref{conj:componentwise} proposes that, with these variables fixed, the matrices $\mab'$ become uniform as $H(\eab)\to\infty$. We can approximate the effect of increasing the height while fixing the matrix dimensions by fixing $n$ and allowing $N$, the size of the prime pool, to vary. As $N$ grows, the size of $B$, and therefore the height of $\eab$ also typically grows, and the likelihood of any fixed prime dividing $B$ decreases. In \Cref{tab:factor-pool-comparison}, we summarise this data from several representative, well-populated blocks. For every $n$, despite the sample sizes being comparable, the matrices become progressively more uniform as $N$ grows, as the underlying heuristic of \Cref{conj:componentwise} predicts.
	
	\subsubsection{Summary}
	
	Taken together, our experiments give a coherent picture of the distribution of $\mab'$ that supports \Cref{conj:componentwise}. The sifted height experiments suggest that the discrepancies visible at computationally accessible heights are naturally explained by the disproportionate impact of small primes at these scales. A modest increase in the size of the smallest prime factor of $B(A^3-27B)$ produces a distribution that is substantially more uniform.
	The data from the prime factor experiments is considerably more uniform, and for fixed matrix dimensions, that uniformity improves when the size of the prime pool increases. The uniformity persists both in small and large matrix spaces. The data therefore provides evidence for the componentwise equidistribution of \Cref{conj:componentwise}, and hence for the random-matrix mechanism that underlies the nullity distribution predicted in \Cref{conj:intro}.

	\section*{Statement on use of generative AI}
	
	We used ChatGPT 5.5 and 5.6 to optimise and parallelise our code in SageMath, search the literature, suggest statistical strategies to analyse the data, create tables from the data, check our results, write documentation for our code, and suggest improvements to the manuscript.
	
	\section*{Acknowledgments}
	
	We are grateful to The Ohio State University's Cycle program, and in particular Kacey Aurum, for introducing us to each other and providing a framework for the early stages of this project. Computational resources for this project were provided by NSF ACCESS grant MTH250073. AW was supported by an AMS-Simons travel grant.
	
	\clearpage
	\appendix
	
\begin{landscape}
\section{Data and tables}\label{appendix}
We include the overview tables from the experiments not shown in the body. All raw data is available at ***github***

\tiny
\setlength{\tabcolsep}{2pt}
\begin{longtable}{crrrrrrrrrrrrrr}
\caption{Distribution statistics for all sifted height experiments.  The last two columns are absolute discrepancies in probability.}\label{tab:full-height-statistics}\\
\toprule
\makecell[c]{Dimensions}
& \makecell[c]{Total generated}
& \makecell[c]{Possible matrices}
& \makecell[c]{Mean count}
& \makecell[c]{Observed\\std.\ dev.}
& \makecell[c]{Uniform\\std.\ dev.}
& \makecell[c]{SD\\ratio}
& \makecell[c]{Minimum}
& \makecell[c]{\% dev.}
& \makecell[c]{Maximum}
& \makecell[c]{\% dev.}
& \makecell[c]{$\MSE$}
& \makecell[c]{Max. entry\\discrepancy}
& \makecell[c]{Max. rank\\discrepancy} \\
\midrule
\endfirsthead
\multicolumn{14}{c}{\tablename\ \thetable\ continued}\\
\toprule
\makecell[c]{Dimensions}
& \makecell[c]{Total generated}
& \makecell[c]{Possible matrices}
& \makecell[c]{Mean count}
& \makecell[c]{Observed\\std.\ dev.}
& \makecell[c]{Uniform\\std.\ dev.}
& \makecell[c]{SD\\ratio}
& \makecell[c]{Minimum}
& \makecell[c]{\% dev.}
& \makecell[c]{Maximum}
& \makecell[c]{\% dev.}
& \makecell[c]{$\MSE$}
& \makecell[c]{Max. entry\\discrepancy}
& \makecell[c]{Max. rank\\discrepancy} \\
\midrule
\endhead
\midrule
\multicolumn{14}{r}{Continued on next page}\\
\endfoot
\bottomrule
\endlastfoot
\multicolumn{14}{l}{\textbf{H-1000, $p<100$}} \\
\addlinespace[2pt]
$1\times1$
& $295{,}776{,}404$
& $3$
& $98{,}592{,}134.67$
& $347{,}673.97$
& $8{,}107.29$
& $42.884$
& $98{,}100{,}944$
& $-0.50\%$
& $98{,}856{,}824$
& $+0.27\%$
& $1.38\mathbin{\cdot}10^{-6}$
& $1.661\mathbin{\cdot}10^{-3}$
& $1.661\mathbin{\cdot}10^{-3}$ \\

$2\times1$
& $110{,}008{,}640$
& $9$
& $12{,}223{,}182.22$
& $134{,}890.37$
& $3{,}296.22$
& $40.923$
& $12{,}010{,}717$
& $-1.74\%$
& $12{,}338{,}125$
& $+0.94\%$
& $1.50\mathbin{\cdot}10^{-6}$
& $5.150\mathbin{\cdot}10^{-3}$
& $1.931\mathbin{\cdot}10^{-3}$ \\

$3\times1$
& $11{,}816{,}765$
& $27$
& $437{,}657.96$
& $8{,}811.23$
& $649.19$
& $13.573$
& $418{,}454$
& $-4.39\%$
& $448{,}605$
& $+2.50\%$
& $5.56\mathbin{\cdot}10^{-7}$
& $8.065\mathbin{\cdot}10^{-3}$
& $1.625\mathbin{\cdot}10^{-3}$ \\

$4\times1$
& $212{,}101$
& $81$
& $2{,}618.53$
& $104.93$
& $50.85$
& $2.063$
& $2{,}301$
& $-12.13\%$
& $2{,}818$
& $+7.62\%$
& $2.45\mathbin{\cdot}10^{-7}$
& $1.002\mathbin{\cdot}10^{-2}$
& $1.497\mathbin{\cdot}10^{-3}$ \\

$1\times2$
& $58{,}592{,}850$
& $9$
& $6{,}510{,}316.67$
& $58{,}499.16$
& $2{,}405.61$
& $24.318$
& $6{,}393{,}505$
& $-1.79\%$
& $6{,}576{,}129$
& $+1.01\%$
& $9.97\mathbin{\cdot}10^{-7}$
& $3.594\mathbin{\cdot}10^{-3}$
& $1.994\mathbin{\cdot}10^{-3}$ \\

$2\times2$
& $21{,}790{,}143$
& $81$
& $269{,}014.11$
& $7{,}731.24$
& $515.45$
& $14.999$
& $250{,}466$
& $-6.89\%$
& $278{,}995$
& $+3.71\%$
& $1.26\mathbin{\cdot}10^{-7}$
& $1.128\mathbin{\cdot}10^{-2}$
& $4.870\mathbin{\cdot}10^{-3}$ \\

$3\times2$
& $2{,}343{,}106$
& $729$
& $3{,}214.14$
& $182.07$
& $56.65$
& $3.214$
& $2{,}624$
& $-18.36\%$
& $3{,}588$
& $+11.63\%$
& $6.04\mathbin{\cdot}10^{-9}$
& $1.892\mathbin{\cdot}10^{-2}$
& $5.118\mathbin{\cdot}10^{-3}$ \\

$4\times2$
& $41{,}934$
& $6{,}561$
& $6.39$
& $2.65$
& $2.53$
& $1.050$
& $0$
& $-100.00\%$
& $20$
& $+212.92\%$
& $4.01\mathbin{\cdot}10^{-9}$
& $2.595\mathbin{\cdot}10^{-2}$
& $3.330\mathbin{\cdot}10^{-3}$ \\

$1\times3$
& $417{,}551$
& $27$
& $15{,}464.85$
& $341.47$
& $122.03$
& $2.798$
& $14{,}686$
& $-5.04\%$
& $16{,}239$
& $+5.01\%$
& $6.69\mathbin{\cdot}10^{-7}$
& $7.268\mathbin{\cdot}10^{-3}$
& $1.865\mathbin{\cdot}10^{-3}$ \\

$2\times3$
& $155{,}833$
& $729$
& $213.76$
& $19.48$
& $14.61$
& $1.333$
& $143$
& $-33.10\%$
& $288$
& $+34.73\%$
& $1.56\mathbin{\cdot}10^{-8}$
& $1.730\mathbin{\cdot}10^{-2}$
& $4.897\mathbin{\cdot}10^{-3}$ \\

$3\times3$
& $16{,}631$
& $19{,}683$
& $0.845$
& $0.926$
& $0.919$
& $1.007$
& $0$
& $-100.00\%$
& $8$
& $+846.81\%$
& $3.10\mathbin{\cdot}10^{-9}$
& $3.028\mathbin{\cdot}10^{-2}$
& $1.381\mathbin{\cdot}10^{-2}$ \\

$4\times3$
& $300$
& $531{,}441$
& $0.00056$
& $0.0238$
& $0.0238$
& $1.000$
& $0$
& $-100.00\%$
& $1$
& $+177{,}047.00\%$
& $6.27\mathbin{\cdot}10^{-9}$
& $8.667\mathbin{\cdot}10^{-2}$
& $2.127\mathbin{\cdot}10^{-2}$ \\

\addlinespace
\multicolumn{14}{l}{\textbf{H-1000, $p<300$}} \\
\addlinespace[2pt]
$1\times1$
& $187{,}537{,}358$
& $3$
& $62{,}512{,}452.67$
& $93{,}819.23$
& $6{,}455.62$
& $14.533$
& $62{,}380{,}467$
& $-0.21\%$
& $62{,}590{,}189$
& $+0.12\%$
& $2.50\mathbin{\cdot}10^{-7}$
& $7.038\mathbin{\cdot}10^{-4}$
& $7.038\mathbin{\cdot}10^{-4}$ \\

$2\times1$
& $57{,}663{,}276$
& $9$
& $6{,}407{,}030.67$
& $33{,}903.96$
& $2{,}386.45$
& $14.207$
& $6{,}356{,}434$
& $-0.79\%$
& $6{,}434{,}415$
& $+0.43\%$
& $3.46\mathbin{\cdot}10^{-7}$
& $2.488\mathbin{\cdot}10^{-3}$
& $7.918\mathbin{\cdot}10^{-4}$ \\

$3\times1$
& $3{,}605{,}233$
& $27$
& $133{,}527.15$
& $1{,}572.76$
& $358.58$
& $4.386$
& $130{,}251$
& $-2.45\%$
& $135{,}781$
& $+1.69\%$
& $1.90\mathbin{\cdot}10^{-7}$
& $3.806\mathbin{\cdot}10^{-3}$
& $7.603\mathbin{\cdot}10^{-4}$ \\

$4\times1$
& $1{,}390$
& $81$
& $17.16$
& $4.49$
& $4.12$
& $1.091$
& $7$
& $-59.21\%$
& $32$
& $+86.47\%$
& $1.04\mathbin{\cdot}10^{-5}$
& $2.638\mathbin{\cdot}10^{-2}$
& $6.040\mathbin{\cdot}10^{-4}$ \\

$1\times2$
& $11{,}532{,}687$
& $9$
& $1{,}281{,}409.67$
& $5{,}928.15$
& $1{,}067.25$
& $5.555$
& $1{,}270{,}220$
& $-0.87\%$
& $1{,}289{,}020$
& $+0.59\%$
& $2.64\mathbin{\cdot}10^{-7}$
& $1.812\mathbin{\cdot}10^{-3}$
& $9.703\mathbin{\cdot}10^{-4}$ \\

$2\times2$
& $3{,}560{,}944$
& $81$
& $43{,}962.27$
& $863.60$
& $208.37$
& $4.144$
& $41{,}624$
& $-5.32\%$
& $45{,}275$
& $+2.99\%$
& $5.88\mathbin{\cdot}10^{-8}$
& $7.390\mathbin{\cdot}10^{-3}$
& $3.448\mathbin{\cdot}10^{-3}$ \\

$3\times2$
& $226{,}846$
& $729$
& $311.17$
& $21.44$
& $17.63$
& $1.216$
& $252$
& $-19.02\%$
& $386$
& $+24.05\%$
& $8.94\mathbin{\cdot}10^{-9}$
& $1.444\mathbin{\cdot}10^{-2}$
& $3.052\mathbin{\cdot}10^{-3}$ \\

$4\times2$
& $83$
& $6{,}561$
& $0.0127$
& $0.113$
& $0.112$
& $1.006$
& $0$
& $-100.00\%$
& $2$
& $+15{,}709.64\%$
& $1.86\mathbin{\cdot}10^{-6}$
& $1.004\mathbin{\cdot}10^{-1}$
& $2.352\mathbin{\cdot}10^{-2}$ \\

\addlinespace
\multicolumn{14}{l}{\textbf{H-1000, $p<500$}} \\
\addlinespace[2pt]
$1\times1$
& $151{,}443{,}534$
& $3$
& $50{,}481{,}178.00$
& $41{,}689.66$
& $5{,}801.22$
& $7.186$
& $50{,}422{,}249$
& $-0.12\%$
& $50{,}512{,}246$
& $+0.06\%$
& $7.58\mathbin{\cdot}10^{-8}$
& $3.891\mathbin{\cdot}10^{-4}$
& $3.891\mathbin{\cdot}10^{-4}$ \\

$2\times1$
& $41{,}999{,}326$
& $9$
& $4{,}666{,}591.78$
& $14{,}454.52$
& $2{,}036.68$
& $7.097$
& $4{,}643{,}489$
& $-0.50\%$
& $4{,}678{,}561$
& $+0.26\%$
& $1.18\mathbin{\cdot}10^{-7}$
& $1.447\mathbin{\cdot}10^{-3}$
& $3.899\mathbin{\cdot}10^{-4}$ \\

$3\times1$
& $1{,}765{,}840$
& $27$
& $65{,}401.48$
& $595.44$
& $250.96$
& $2.373$
& $64{,}350$
& $-1.61\%$
& $66{,}409$
& $+1.54\%$
& $1.14\mathbin{\cdot}10^{-7}$
& $2.910\mathbin{\cdot}10^{-3}$
& $3.106\mathbin{\cdot}10^{-4}$ \\

$1\times2$
& $3{,}231{,}567$
& $9$
& $359{,}063.00$
& $1{,}102.98$
& $564.95$
& $1.952$
& $357{,}190$
& $-0.52\%$
& $360{,}821$
& $+0.49\%$
& $1.16\mathbin{\cdot}10^{-7}$
& $1.065\mathbin{\cdot}10^{-3}$
& $5.796\mathbin{\cdot}10^{-4}$ \\

$2\times2$
& $901{,}892$
& $81$
& $11{,}134.47$
& $173.44$
& $104.87$
& $1.654$
& $10{,}663$
& $-4.23\%$
& $11{,}525$
& $+3.51\%$
& $3.70\mathbin{\cdot}10^{-8}$
& $4.312\mathbin{\cdot}10^{-3}$
& $2.631\mathbin{\cdot}10^{-3}$ \\

$3\times2$
& $39{,}226$
& $729$
& $53.81$
& $7.57$
& $7.33$
& $1.033$
& $35$
& $-34.95\%$
& $83$
& $+54.25\%$
& $3.73\mathbin{\cdot}10^{-8}$
& $1.157\mathbin{\cdot}10^{-2}$
& $3.080\mathbin{\cdot}10^{-3}$ \\

\addlinespace
\multicolumn{14}{l}{\textbf{H-1000, $p<800$}} \\
\addlinespace[2pt]
$1\times1$
& $124{,}309{,}856$
& $3$
& $41{,}436{,}618.67$
& $32{,}902.34$
& $5{,}255.89$
& $6.260$
& $41{,}391{,}323$
& $-0.11\%$
& $41{,}468{,}490$
& $+0.08\%$
& $7.01\mathbin{\cdot}10^{-8}$
& $3.644\mathbin{\cdot}10^{-4}$
& $3.644\mathbin{\cdot}10^{-4}$ \\

$2\times1$
& $31{,}604{,}006$
& $9$
& $3{,}511{,}556.22$
& $11{,}906.36$
& $1{,}766.74$
& $6.739$
& $3{,}493{,}872$
& $-0.50\%$
& $3{,}526{,}017$
& $+0.41\%$
& $1.42\mathbin{\cdot}10^{-7}$
& $1.512\mathbin{\cdot}10^{-3}$
& $3.935\mathbin{\cdot}10^{-4}$ \\

$3\times1$
& $825{,}049$
& $27$
& $30{,}557.37$
& $326.69$
& $171.54$
& $1.904$
& $29{,}903$
& $-2.14\%$
& $31{,}190$
& $+2.07\%$
& $1.57\mathbin{\cdot}10^{-7}$
& $2.678\mathbin{\cdot}10^{-3}$
& $3.107\mathbin{\cdot}10^{-4}$ \\

$1\times2$
& $306{,}603$
& $9$
& $34{,}067.00$
& $244.01$
& $174.02$
& $1.402$
& $33{,}740$
& $-0.96\%$
& $34{,}529$
& $+1.36\%$
& $6.33\mathbin{\cdot}10^{-7}$
& $2.635\mathbin{\cdot}10^{-3}$
& $7.371\mathbin{\cdot}10^{-4}$ \\

$2\times2$
& $78{,}666$
& $81$
& $971.19$
& $34.48$
& $30.97$
& $1.113$
& $885$
& $-8.87\%$
& $1{,}062$
& $+9.35\%$
& $1.92\mathbin{\cdot}10^{-7}$
& $7.589\mathbin{\cdot}10^{-3}$
& $4.565\mathbin{\cdot}10^{-3}$ \\

$3\times2$
& $2{,}098$
& $729$
& $2.88$
& $1.71$
& $1.70$
& $1.010$
& $0$
& $-100.00\%$
& $10$
& $+247.47\%$
& $6.66\mathbin{\cdot}10^{-7}$
& $1.779\mathbin{\cdot}10^{-2}$
& $7.713\mathbin{\cdot}10^{-3}$ \\

\addlinespace
\multicolumn{14}{l}{\textbf{H-1200, $p<500$}} \\
\addlinespace[2pt]
$1\times1$
& $2{,}836{,}897{,}251$
& $3$
& $945{,}632{,}417.00$
& $702{,}321.60$
& $25{,}108.20$
& $27.972$
& $944{,}640{,}172$
& $-0.10\%$
& $946{,}166{,}891$
& $+0.06\%$
& $6.13\mathbin{\cdot}10^{-8}$
& $3.498\mathbin{\cdot}10^{-4}$
& $3.498\mathbin{\cdot}10^{-4}$ \\

$2\times1$
& $815{,}027{,}639$
& $9$
& $90{,}558{,}626.56$
& $239{,}007.16$
& $8{,}971.99$
& $26.639$
& $90{,}212{,}049$
& $-0.38\%$
& $90{,}749{,}739$
& $+0.21\%$
& $8.60\mathbin{\cdot}10^{-8}$
& $1.242\mathbin{\cdot}10^{-3}$
& $4.252\mathbin{\cdot}10^{-4}$ \\

$3\times1$
& $39{,}739{,}180$
& $27$
& $1{,}471{,}821.48$
& $9{,}565.32$
& $1{,}190.51$
& $8.035$
& $1{,}453{,}826$
& $-1.22\%$
& $1{,}486{,}299$
& $+0.98\%$
& $5.79\mathbin{\cdot}10^{-8}$
& $2.208\mathbin{\cdot}10^{-3}$
& $1.894\mathbin{\cdot}10^{-4}$ \\

$1\times2$
& $99{,}775{,}184$
& $9$
& $11{,}086{,}131.56$
& $32{,}730.33$
& $3{,}139.16$
& $10.426$
& $11{,}027{,}245$
& $-0.53\%$
& $11{,}134{,}215$
& $+0.43\%$
& $1.08\mathbin{\cdot}10^{-7}$
& $1.128\mathbin{\cdot}10^{-3}$
& $5.902\mathbin{\cdot}10^{-4}$ \\

$2\times2$
& $28{,}884{,}156$
& $81$
& $356{,}594.52$
& $3{,}565.56$
& $593.46$
& $6.008$
& $347{,}083$
& $-2.67\%$
& $362{,}712$
& $+1.72\%$
& $1.52\mathbin{\cdot}10^{-8}$
& $3.237\mathbin{\cdot}10^{-3}$
& $1.417\mathbin{\cdot}10^{-3}$ \\

$3\times2$
& $1{,}445{,}809$
& $729$
& $1{,}983.28$
& $67.72$
& $44.50$
& $1.522$
& $1{,}781$
& $-10.20\%$
& $2{,}176$
& $+9.72\%$
& $2.19\mathbin{\cdot}10^{-9}$
& $8.403\mathbin{\cdot}10^{-3}$
& $1.055\mathbin{\cdot}10^{-3}$ \\

\addlinespace
\multicolumn{14}{l}{\textbf{H-2250, $p<2000$}} \\
\addlinespace[2pt]
$1\times1$
& $1{,}070{,}315{,}260$
& $3$
& $356{,}771{,}753.33$
& $119{,}206.57$
& $15{,}422.32$
& $7.729$
& $356{,}603{,}184$
& $-0.05\%$
& $356{,}857{,}934$
& $+0.02\%$
& $1.24\mathbin{\cdot}10^{-8}$
& $1.575\mathbin{\cdot}10^{-4}$
& $1.575\mathbin{\cdot}10^{-4}$ \\

$2\times1$
& $262{,}760{,}745$
& $9$
& $29{,}195{,}638.33$
& $39{,}730.94$
& $5{,}094.28$
& $7.799$
& $29{,}135{,}075$
& $-0.21\%$
& $29{,}244{,}180$
& $+0.17\%$
& $2.29\mathbin{\cdot}10^{-8}$
& $6.188\mathbin{\cdot}10^{-4}$
& $1.776\mathbin{\cdot}10^{-4}$ \\

$3\times1$
& $5{,}158{,}956$
& $27$
& $191{,}072.44$
& $861.13$
& $428.95$
& $2.008$
& $189{,}509$
& $-0.82\%$
& $193{,}034$
& $+1.03\%$
& $2.79\mathbin{\cdot}10^{-8}$
& $9.453\mathbin{\cdot}10^{-4}$
& $8.346\mathbin{\cdot}10^{-5}$ \\

$1\times2$
& $590{,}099$
& $9$
& $65{,}566.56$
& $199.26$
& $241.42$
& $0.825$
& $65{,}312$
& $-0.39\%$
& $65{,}911$
& $+0.53\%$
& $1.14\mathbin{\cdot}10^{-7}$
& $4.259\mathbin{\cdot}10^{-4}$
& $2.823\mathbin{\cdot}10^{-4}$ \\

$2\times2$
& $146{,}108$
& $81$
& $1{,}803.80$
& $43.85$
& $42.21$
& $1.039$
& $1{,}684$
& $-6.64\%$
& $1{,}900$
& $+5.33\%$
& $9.01\mathbin{\cdot}10^{-8}$
& $3.345\mathbin{\cdot}10^{-3}$
& $5.727\mathbin{\cdot}10^{-4}$ \\

$3\times2$
& $3{,}085$
& $729$
& $4.23$
& $2.13$
& $2.06$
& $1.037$
& $0$
& $-100.00\%$
& $12$
& $+183.57\%$
& $4.77\mathbin{\cdot}10^{-7}$
& $1.610\mathbin{\cdot}10^{-2}$
& $5.945\mathbin{\cdot}10^{-3}$ \\

\addlinespace
\multicolumn{14}{l}{\textbf{H-3250, $p<1000$}} \\
\addlinespace[2pt]
$1\times1$
& $4{,}427{,}036{,}888$
& $3$
& $1{,}475{,}678{,}962.67$
& $575{,}557.64$
& $31{,}365.36$
& $18.350$
& $1{,}474{,}865{,}895$
& $-0.06\%$
& $1{,}476{,}118{,}521$
& $+0.03\%$
& $1.69\mathbin{\cdot}10^{-8}$
& $1.837\mathbin{\cdot}10^{-4}$
& $1.837\mathbin{\cdot}10^{-4}$ \\

$2\times1$
& $1{,}292{,}105{,}782$
& $9$
& $143{,}567{,}309.11$
& $194{,}822.13$
& $11{,}296.70$
& $17.246$
& $143{,}283{,}441$
& $-0.20\%$
& $143{,}716{,}189$
& $+0.10\%$
& $2.27\mathbin{\cdot}10^{-8}$
& $6.386\mathbin{\cdot}10^{-4}$
& $2.165\mathbin{\cdot}10^{-4}$ \\

$3\times1$
& $66{,}615{,}066$
& $27$
& $2{,}467{,}224.67$
& $8{,}144.83$
& $1{,}541.38$
& $5.284$
& $2{,}450{,}982$
& $-0.66\%$
& $2{,}480{,}888$
& $+0.55\%$
& $1.49\mathbin{\cdot}10^{-8}$
& $1.120\mathbin{\cdot}10^{-3}$
& $1.510\mathbin{\cdot}10^{-4}$ \\

$1\times2$
& $203{,}960{,}440$
& $9$
& $22{,}662{,}271.11$
& $26{,}159.53$
& $4{,}488.23$
& $5.828$
& $22{,}615{,}982$
& $-0.20\%$
& $22{,}704{,}317$
& $+0.19\%$
& $1.65\mathbin{\cdot}10^{-8}$
& $3.615\mathbin{\cdot}10^{-4}$
& $2.270\mathbin{\cdot}10^{-4}$ \\

$2\times2$
& $59{,}973{,}424$
& $81$
& $740{,}412.64$
& $3{,}300.93$
& $855.14$
& $3.860$
& $731{,}360$
& $-1.22\%$
& $746{,}325$
& $+0.80\%$
& $3.03\mathbin{\cdot}10^{-9}$
& $1.295\mathbin{\cdot}10^{-3}$
& $5.845\mathbin{\cdot}10^{-4}$ \\

$3\times2$
& $3{,}173{,}517$
& $729$
& $4{,}353.25$
& $82.17$
& $65.93$
& $1.246$
& $4{,}074$
& $-6.41\%$
& $4{,}597$
& $+5.60\%$
& $6.70\mathbin{\cdot}10^{-10}$
& $3.062\mathbin{\cdot}10^{-3}$
& $5.860\mathbin{\cdot}10^{-4}$ \\

\end{longtable}
\end{landscape}


\begin{landscape}
\tiny
\setlength{\tabcolsep}{2pt}
\begin{longtable}{crrrrrrrrrrrrr}
\caption{Distribution statistics for all prime factor experiments.}\\
\toprule
\makecell[c]{Dimensions}
& \makecell[c]{Total generated}
& \makecell[c]{Possible matrices}
& \makecell[c]{Mean count}
& \makecell[c]{Observed\\std.\ dev.}
& \makecell[c]{Uniform\\std.\ dev.}
& \makecell[c]{SD\\ratio}
& \makecell[c]{Minimum}
& \makecell[c]{\% dev.}
& \makecell[c]{Maximum}
& \makecell[c]{\% dev.}
& \makecell[c]{$\MSE$}
& \makecell[c]{Max. entry\\discrepancy}
& \makecell[c]{Max. rank\\discrepancy} \\
\midrule
\endfirsthead
\multicolumn{14}{c}{\tablename\ \thetable\ continued}\\
\toprule
\makecell[c]{Dimensions}
& \makecell[c]{Total generated}
& \makecell[c]{Possible matrices}
& \makecell[c]{Mean count}
& \makecell[c]{Observed\\std.\ dev.}
& \makecell[c]{Uniform\\std.\ dev.}
& \makecell[c]{SD\\ratio}
& \makecell[c]{Minimum}
& \makecell[c]{\% dev.}
& \makecell[c]{Maximum}
& \makecell[c]{\% dev.}
& \makecell[c]{$\MSE$}
& \makecell[c]{Max. entry\\discrepancy}
& \makecell[c]{Max. rank\\discrepancy} \\
\midrule
\endhead
\midrule
\multicolumn{14}{r}{Continued on next page}\\
\endfoot
\bottomrule
\endlastfoot
\multicolumn{14}{l}{\textbf{F1k-2$^\ast$}} \\
\addlinespace[2pt]
$1\times1$
& $44{,}111{,}474$
& $3$
& $14{,}703{,}824.67$
& $59{,}419.39$
& $3{,}130.90$
& $18.978$
& $14{,}625{,}502$
& $-0.53\%$
& $14{,}769{,}352$
& $+0.45\%$
& $1.81\mathbin{\cdot}10^{-6}$
& $1.776\mathbin{\cdot}10^{-3}$
& $1.776\mathbin{\cdot}10^{-3}$ \\

$2\times1$
& $25{,}099{,}773$
& $9$
& $2{,}788{,}863.67$
& $35{,}717.36$
& $1{,}574.48$
& $22.685$
& $2{,}733{,}287$
& $-1.99\%$
& $2{,}831{,}413$
& $+1.53\%$
& $2.02\mathbin{\cdot}10^{-6}$
& $5.685\mathbin{\cdot}10^{-3}$
& $1.597\mathbin{\cdot}10^{-3}$ \\

$3\times1$
& $5{,}872{,}394$
& $27$
& $217{,}496.07$
& $5{,}102.51$
& $457.65$
& $11.149$
& $203{,}001$
& $-6.66\%$
& $224{,}324$
& $+3.14\%$
& $7.55\mathbin{\cdot}10^{-7}$
& $5.438\mathbin{\cdot}10^{-3}$
& $8.435\mathbin{\cdot}10^{-4}$ \\

$4\times1$
& $550{,}537$
& $81$
& $6{,}796.75$
& $285.00$
& $81.93$
& $3.478$
& $5{,}763$
& $-15.21\%$
& $7{,}381$
& $+8.60\%$
& $2.68\mathbin{\cdot}10^{-7}$
& $6.489\mathbin{\cdot}10^{-3}$
& $5.517\mathbin{\cdot}10^{-4}$ \\

$5\times1$
& $16{,}056$
& $243$
& $66.07$
& $12.08$
& $8.11$
& $1.489$
& $40$
& $-39.46\%$
& $98$
& $+48.32\%$
& $5.66\mathbin{\cdot}10^{-7}$
& $1.158\mathbin{\cdot}10^{-2}$
& $1.126\mathbin{\cdot}10^{-3}$ \\

$6\times1$
& $48$
& $729$
& $0.0658$
& $0.303$
& $0.256$
& $1.181$
& $0$
& $-100.00\%$
& $3$
& $+4{,}456.25\%$
& $3.98\mathbin{\cdot}10^{-5}$
& $1.458\mathbin{\cdot}10^{-1}$
& $1.372\mathbin{\cdot}10^{-3}$ \\

\addlinespace
\multicolumn{14}{l}{\textbf{F10k-2$^\ast$}} \\
\addlinespace[2pt]
$1\times1$
& $41{,}809{,}445$
& $3$
& $13{,}936{,}481.67$
& $6{,}135.77$
& $3{,}048.11$
& $2.013$
& $13{,}929{,}503$
& $-0.05\%$
& $13{,}944{,}437$
& $+0.06\%$
& $2.15\mathbin{\cdot}10^{-8}$
& $1.903\mathbin{\cdot}10^{-4}$
& $1.669\mathbin{\cdot}10^{-4}$ \\

$2\times1$
& $27{,}317{,}390$
& $9$
& $3{,}035{,}265.56$
& $7{,}219.15$
& $1{,}642.56$
& $4.395$
& $3{,}023{,}159$
& $-0.40\%$
& $3{,}045{,}020$
& $+0.32\%$
& $6.98\mathbin{\cdot}10^{-8}$
& $8.991\mathbin{\cdot}10^{-4}$
& $6.833\mathbin{\cdot}10^{-5}$ \\

$3\times1$
& $8{,}114{,}492$
& $27$
& $300{,}536.74$
& $1{,}387.16$
& $537.96$
& $2.579$
& $296{,}504$
& $-1.34\%$
& $303{,}109$
& $+0.86\%$
& $2.92\mathbin{\cdot}10^{-8}$
& $1.259\mathbin{\cdot}10^{-3}$
& $4.643\mathbin{\cdot}10^{-5}$ \\

$4\times1$
& $1{,}160{,}774$
& $81$
& $14{,}330.54$
& $182.89$
& $118.97$
& $1.537$
& $13{,}692$
& $-4.46\%$
& $14{,}705$
& $+2.61\%$
& $2.48\mathbin{\cdot}10^{-8}$
& $2.750\mathbin{\cdot}10^{-3}$
& $2.089\mathbin{\cdot}10^{-4}$ \\

$5\times1$
& $74{,}144$
& $243$
& $305.12$
& $17.98$
& $17.43$
& $1.031$
& $258$
& $-15.44\%$
& $353$
& $+15.69\%$
& $5.88\mathbin{\cdot}10^{-8}$
& $3.961\mathbin{\cdot}10^{-3}$
& $5.556\mathbin{\cdot}10^{-5}$ \\

$6\times1$
& $1{,}623$
& $729$
& $2.23$
& $1.51$
& $1.49$
& $1.015$
& $0$
& $-100.00\%$
& $8$
& $+259.33\%$
& $8.69\mathbin{\cdot}10^{-7}$
& $2.773\mathbin{\cdot}10^{-2}$
& $1.395\mathbin{\cdot}10^{-4}$ \\

$7\times1$
& $11$
& $2{,}187$
& $0.00503$
& $0.0707$
& $0.0709$
& $0.998$
& $0$
& $-100.00\%$
& $1$
& $+19{,}781.82\%$
& $4.14\mathbin{\cdot}10^{-5}$
& $3.939\mathbin{\cdot}10^{-1}$
& $4.572\mathbin{\cdot}10^{-4}$ \\

\addlinespace
\multicolumn{14}{l}{\textbf{F100k-2$^\ast$}} \\
\addlinespace[2pt]
$1\times1$
& $39{,}933{,}999$
& $3$
& $13{,}311{,}333.00$
& $958.07$
& $2{,}978.96$
& $0.322$
& $13{,}309{,}979$
& $-0.01\%$
& $13{,}312{,}053$
& $+0.01\%$
& $5.76\mathbin{\cdot}10^{-10}$
& $3.391\mathbin{\cdot}10^{-5}$
& $1.803\mathbin{\cdot}10^{-5}$ \\

$2\times1$
& $28{,}635{,}104$
& $9$
& $3{,}181{,}678.22$
& $2{,}039.06$
& $1{,}681.71$
& $1.212$
& $3{,}178{,}406$
& $-0.10\%$
& $3{,}184{,}812$
& $+0.10\%$
& $5.07\mathbin{\cdot}10^{-9}$
& $2.384\mathbin{\cdot}10^{-4}$
& $1.228\mathbin{\cdot}10^{-5}$ \\

$3\times1$
& $9{,}896{,}073$
& $27$
& $366{,}521.22$
& $726.31$
& $594.09$
& $1.223$
& $364{,}975$
& $-0.42\%$
& $368{,}374$
& $+0.51\%$
& $5.39\mathbin{\cdot}10^{-9}$
& $3.988\mathbin{\cdot}10^{-4}$
& $2.043\mathbin{\cdot}10^{-6}$ \\

$4\times1$
& $1{,}798{,}197$
& $81$
& $22{,}199.96$
& $152.26$
& $148.07$
& $1.028$
& $21{,}877$
& $-1.45\%$
& $22{,}644$
& $+2.00\%$
& $7.17\mathbin{\cdot}10^{-9}$
& $8.859\mathbin{\cdot}10^{-4}$
& $8.399\mathbin{\cdot}10^{-5}$ \\

$5\times1$
& $170{,}503$
& $243$
& $701.66$
& $29.02$
& $26.43$
& $1.098$
& $632$
& $-9.93\%$
& $776$
& $+10.60\%$
& $2.90\mathbin{\cdot}10^{-8}$
& $2.162\mathbin{\cdot}10^{-3}$
& $3.719\mathbin{\cdot}10^{-5}$ \\

$6\times1$
& $7{,}403$
& $729$
& $10.16$
& $3.11$
& $3.18$
& $0.978$
& $3$
& $-70.46\%$
& $21$
& $+106.79\%$
& $1.77\mathbin{\cdot}10^{-7}$
& $8.735\mathbin{\cdot}10^{-3}$
& $1.141\mathbin{\cdot}10^{-4}$ \\

$7\times1$
& $158$
& $2{,}187$
& $0.0722$
& $0.266$
& $0.269$
& $0.989$
& $0$
& $-100.00\%$
& $2$
& $+2{,}668.35\%$
& $2.83\mathbin{\cdot}10^{-6}$
& $9.705\mathbin{\cdot}10^{-2}$
& $4.572\mathbin{\cdot}10^{-4}$ \\

\addlinespace
\multicolumn{14}{l}{\textbf{F1k-3}} \\
\addlinespace[2pt]
$1\times2$
& $40{,}815{,}496$
& $9$
& $4{,}535{,}055.11$
& $26{,}065.42$
& $2{,}007.77$
& $12.982$
& $4{,}479{,}769$
& $-1.22\%$
& $4{,}568{,}214$
& $+0.73\%$
& $4.08\mathbin{\cdot}10^{-7}$
& $2.236\mathbin{\cdot}10^{-3}$
& $1.355\mathbin{\cdot}10^{-3}$ \\

$2\times2$
& $28{,}047{,}255$
& $81$
& $346{,}262.41$
& $4{,}358.81$
& $584.80$
& $7.454$
& $333{,}738$
& $-3.62\%$
& $353{,}048$
& $+1.96\%$
& $2.42\mathbin{\cdot}10^{-8}$
& $4.360\mathbin{\cdot}10^{-3}$
& $2.693\mathbin{\cdot}10^{-3}$ \\

$3\times2$
& $9{,}070{,}355$
& $729$
& $12{,}442.19$
& $279.39$
& $111.47$
& $2.506$
& $11{,}084$
& $-10.92\%$
& $13{,}228$
& $+6.32\%$
& $9.49\mathbin{\cdot}10^{-10}$
& $4.480\mathbin{\cdot}10^{-3}$
& $2.906\mathbin{\cdot}10^{-3}$ \\

$4\times2$
& $1{,}488{,}229$
& $6{,}561$
& $226.83$
& $16.95$
& $15.06$
& $1.125$
& $167$
& $-26.38\%$
& $296$
& $+30.49\%$
& $1.30\mathbin{\cdot}10^{-10}$
& $4.869\mathbin{\cdot}10^{-3}$
& $2.071\mathbin{\cdot}10^{-3}$ \\

$5\times2$
& $121{,}499$
& $59{,}049$
& $2.06$
& $1.44$
& $1.43$
& $1.006$
& $0$
& $-100.00\%$
& $11$
& $+434.60\%$
& $1.41\mathbin{\cdot}10^{-10}$
& $6.277\mathbin{\cdot}10^{-3}$
& $1.612\mathbin{\cdot}10^{-3}$ \\

$6\times2$
& $4{,}263$
& $531{,}441$
& $0.00802$
& $0.0897$
& $0.0896$
& $1.001$
& $0$
& $-100.00\%$
& $2$
& $+24{,}832.72\%$
& $4.43\mathbin{\cdot}10^{-10}$
& $1.689\mathbin{\cdot}10^{-2}$
& $1.024\mathbin{\cdot}10^{-3}$ \\

\addlinespace
\multicolumn{14}{l}{\textbf{F10k-3}} \\
\addlinespace[2pt]
$1\times2$
& $38{,}286{,}564$
& $9$
& $4{,}254{,}062.67$
& $4{,}569.97$
& $1{,}944.58$
& $2.350$
& $4{,}244{,}825$
& $-0.22\%$
& $4{,}259{,}553$
& $+0.13\%$
& $1.42\mathbin{\cdot}10^{-8}$
& $4.157\mathbin{\cdot}10^{-4}$
& $2.413\mathbin{\cdot}10^{-4}$ \\

$2\times2$
& $29{,}538{,}488$
& $81$
& $364{,}672.69$
& $929.09$
& $600.14$
& $1.548$
& $362{,}354$
& $-0.64\%$
& $366{,}449$
& $+0.49\%$
& $9.89\mathbin{\cdot}10^{-10}$
& $4.857\mathbin{\cdot}10^{-4}$
& $3.689\mathbin{\cdot}10^{-4}$ \\

$3\times2$
& $11{,}410{,}675$
& $729$
& $15{,}652.50$
& $135.04$
& $125.02$
& $1.080$
& $15{,}237$
& $-2.65\%$
& $16{,}130$
& $+3.05\%$
& $1.40\mathbin{\cdot}10^{-10}$
& $7.249\mathbin{\cdot}10^{-4}$
& $4.430\mathbin{\cdot}10^{-4}$ \\

$4\times2$
& $2{,}458{,}424$
& $6{,}561$
& $374.70$
& $19.60$
& $19.36$
& $1.012$
& $309$
& $-17.53\%$
& $444$
& $+18.49\%$
& $6.35\mathbin{\cdot}10^{-11}$
& $8.272\mathbin{\cdot}10^{-4}$
& $3.692\mathbin{\cdot}10^{-4}$ \\

$5\times2$
& $302{,}508$
& $59{,}049$
& $5.12$
& $2.27$
& $2.26$
& $1.002$
& $0$
& $-100.00\%$
& $17$
& $+231.84\%$
& $5.63\mathbin{\cdot}10^{-11}$
& $1.927\mathbin{\cdot}10^{-3}$
& $2.045\mathbin{\cdot}10^{-5}$ \\

$6\times2$
& $20{,}446$
& $531{,}441$
& $0.0385$
& $0.196$
& $0.196$
& $1.000$
& $0$
& $-100.00\%$
& $4$
& $+10{,}296.97\%$
& $9.20\mathbin{\cdot}10^{-11}$
& $7.467\mathbin{\cdot}10^{-3}$
& $4.437\mathbin{\cdot}10^{-4}$ \\

\addlinespace
\multicolumn{14}{l}{\textbf{F100k-3}} \\
\addlinespace[2pt]
$1\times2$
& $36{,}317{,}337$
& $9$
& $4{,}035{,}259.67$
& $1{,}648.94$
& $1{,}893.91$
& $0.871$
& $4{,}032{,}507$
& $-0.07\%$
& $4{,}037{,}956$
& $+0.07\%$
& $2.06\mathbin{\cdot}10^{-9}$
& $5.350\mathbin{\cdot}10^{-5}$
& $7.424\mathbin{\cdot}10^{-5}$ \\

$2\times2$
& $30{,}339{,}126$
& $81$
& $374{,}557.11$
& $591.03$
& $608.22$
& $0.972$
& $373{,}325$
& $-0.33\%$
& $375{,}753$
& $+0.32\%$
& $3.80\mathbin{\cdot}10^{-10}$
& $2.266\mathbin{\cdot}10^{-4}$
& $3.713\mathbin{\cdot}10^{-5}$ \\

$3\times2$
& $13{,}156{,}873$
& $729$
& $18{,}047.84$
& $128.56$
& $134.25$
& $0.958$
& $17{,}654$
& $-2.18\%$
& $18{,}503$
& $+2.52\%$
& $9.55\mathbin{\cdot}10^{-11}$
& $2.909\mathbin{\cdot}10^{-4}$
& $2.372\mathbin{\cdot}10^{-5}$ \\

$4\times2$
& $3{,}346{,}495$
& $6{,}561$
& $510.06$
& $22.81$
& $22.58$
& $1.010$
& $428$
& $-16.09\%$
& $592$
& $+16.07\%$
& $4.65\mathbin{\cdot}10^{-11}$
& $6.210\mathbin{\cdot}10^{-4}$
& $1.171\mathbin{\cdot}10^{-4}$ \\

$5\times2$
& $521{,}106$
& $59{,}049$
& $8.82$
& $2.98$
& $2.97$
& $1.003$
& $0$
& $-100.00\%$
& $24$
& $+171.96\%$
& $3.27\mathbin{\cdot}10^{-11}$
& $1.021\mathbin{\cdot}10^{-3}$
& $3.984\mathbin{\cdot}10^{-4}$ \\

$6\times2$
& $49{,}809$
& $531{,}441$
& $0.0937$
& $0.306$
& $0.306$
& $0.999$
& $0$
& $-100.00\%$
& $3$
& $+3{,}100.87\%$
& $3.77\mathbin{\cdot}10^{-11}$
& $7.649\mathbin{\cdot}10^{-3}$
& $2.023\mathbin{\cdot}10^{-4}$ \\

\addlinespace
\multicolumn{14}{l}{\textbf{F1k-4}} \\
\addlinespace[2pt]
$1\times3$
& $38{,}418{,}717$
& $27$
& $1{,}422{,}915.44$
& $10{,}005.40$
& $1{,}170.56$
& $8.548$
& $1{,}397{,}782$
& $-1.77\%$
& $1{,}440{,}616$
& $+1.24\%$
& $6.78\mathbin{\cdot}10^{-8}$
& $2.593\mathbin{\cdot}10^{-3}$
& $6.542\mathbin{\cdot}10^{-4}$ \\

$2\times3$
& $29{,}630{,}608$
& $729$
& $40{,}645.55$
& $604.06$
& $201.47$
& $2.998$
& $38{,}330$
& $-5.70\%$
& $41{,}876$
& $+3.03\%$
& $4.16\mathbin{\cdot}10^{-10}$
& $4.856\mathbin{\cdot}10^{-3}$
& $1.745\mathbin{\cdot}10^{-3}$ \\

$3\times3$
& $11{,}445{,}440$
& $19{,}683$
& $581.49$
& $27.48$
& $24.11$
& $1.140$
& $465$
& $-20.03\%$
& $683$
& $+17.46\%$
& $5.76\mathbin{\cdot}10^{-12}$
& $5.171\mathbin{\cdot}10^{-3}$
& $2.206\mathbin{\cdot}10^{-3}$ \\

$4\times3$
& $2{,}466{,}815$
& $531{,}441$
& $4.64$
& $2.16$
& $2.15$
& $1.002$
& $0$
& $-100.00\%$
& $18$
& $+287.78\%$
& $7.65\mathbin{\cdot}10^{-13}$
& $5.537\mathbin{\cdot}10^{-3}$
& $2.488\mathbin{\cdot}10^{-3}$ \\

\addlinespace
\multicolumn{14}{l}{\textbf{F10k-4}} \\
\addlinespace[2pt]
$1\times3$
& $39{,}520{,}641$
& $27$
& $1{,}463{,}727.44$
& $2{,}241.27$
& $1{,}187.23$
& $1.888$
& $1{,}458{,}336$
& $-0.37\%$
& $1{,}466{,}584$
& $+0.20\%$
& $3.22\mathbin{\cdot}10^{-9}$
& $4.455\mathbin{\cdot}10^{-4}$
& $1.364\mathbin{\cdot}10^{-4}$ \\

$2\times3$
& $33{,}815{,}155$
& $729$
& $46{,}385.67$
& $252.68$
& $215.23$
& $1.174$
& $45{,}659$
& $-1.57\%$
& $47{,}256$
& $+1.88\%$
& $5.58\mathbin{\cdot}10^{-11}$
& $7.711\mathbin{\cdot}10^{-4}$
& $3.580\mathbin{\cdot}10^{-4}$ \\

$3\times3$
& $15{,}178{,}560$
& $19{,}683$
& $771.15$
& $27.96$
& $27.77$
& $1.007$
& $652$
& $-15.45\%$
& $892$
& $+15.67\%$
& $3.39\mathbin{\cdot}10^{-12}$
& $6.591\mathbin{\cdot}10^{-4}$
& $2.783\mathbin{\cdot}10^{-4}$ \\

$4\times3$
& $4{,}040{,}990$
& $531{,}441$
& $7.60$
& $2.76$
& $2.76$
& $1.001$
& $0$
& $-100.00\%$
& $25$
& $+228.78\%$
& $4.67\mathbin{\cdot}10^{-13}$
& $9.722\mathbin{\cdot}10^{-4}$
& $4.555\mathbin{\cdot}10^{-4}$ \\

\addlinespace
\multicolumn{14}{l}{\textbf{F100k-4}} \\
\addlinespace[2pt]
$1\times3$
& $34{,}063{,}120$
& $27$
& $1{,}261{,}597.04$
& $1{,}021.68$
& $1{,}102.21$
& $0.927$
& $1{,}259{,}270$
& $-0.18\%$
& $1{,}263{,}623$
& $+0.16\%$
& $9.00\mathbin{\cdot}10^{-10}$
& $1.389\mathbin{\cdot}10^{-4}$
& $6.193\mathbin{\cdot}10^{-6}$ \\

$2\times3$
& $31{,}318{,}687$
& $729$
& $42{,}961.16$
& $205.23$
& $207.13$
& $0.991$
& $42{,}291$
& $-1.56\%$
& $43{,}574$
& $+1.43\%$
& $4.29\mathbin{\cdot}10^{-11}$
& $1.700\mathbin{\cdot}10^{-4}$
& $6.354\mathbin{\cdot}10^{-5}$ \\

$3\times3$
& $15{,}513{,}787$
& $19{,}683$
& $788.18$
& $28.12$
& $28.07$
& $1.001$
& $685$
& $-13.09\%$
& $904$
& $+14.69\%$
& $3.28\mathbin{\cdot}10^{-12}$
& $2.346\mathbin{\cdot}10^{-4}$
& $1.052\mathbin{\cdot}10^{-4}$ \\

$4\times3$
& $4{,}722{,}082$
& $531{,}441$
& $8.89$
& $2.98$
& $2.98$
& $1.000$
& $0$
& $-100.00\%$
& $26$
& $+192.61\%$
& $3.98\mathbin{\cdot}10^{-13}$
& $5.496\mathbin{\cdot}10^{-4}$
& $1.471\mathbin{\cdot}10^{-4}$ \\

\addlinespace
\multicolumn{14}{l}{\textbf{F1k-5}} \\
\addlinespace[2pt]
$1\times4$
& $37{,}768{,}099$
& $81$
& $466{,}272.83$
& $3{,}583.54$
& $678.61$
& $5.281$
& $457{,}128$
& $-1.96\%$
& $472{,}373$
& $+1.31\%$
& $9.00\mathbin{\cdot}10^{-9}$
& $2.651\mathbin{\cdot}10^{-3}$
& $2.338\mathbin{\cdot}10^{-4}$ \\

$2\times4$
& $31{,}593{,}318$
& $6{,}561$
& $4{,}815.32$
& $103.85$
& $69.39$
& $1.497$
& $4{,}453$
& $-7.52\%$
& $5{,}137$
& $+6.68\%$
& $1.08\mathbin{\cdot}10^{-11}$
& $5.417\mathbin{\cdot}10^{-3}$
& $7.541\mathbin{\cdot}10^{-4}$ \\

$3\times4$
& $13{,}736{,}100$
& $531{,}441$
& $25.85$
& $5.12$
& $5.08$
& $1.008$
& $5$
& $-80.66\%$
& $54$
& $+108.92\%$
& $1.39\mathbin{\cdot}10^{-13}$
& $6.124\mathbin{\cdot}10^{-3}$
& $1.520\mathbin{\cdot}10^{-3}$ \\

\addlinespace
\multicolumn{14}{l}{\textbf{F10k-5}} \\
\addlinespace[2pt]
$1\times4$
& $36{,}011{,}308$
& $81$
& $444{,}584.05$
& $931.39$
& $662.64$
& $1.406$
& $442{,}483$
& $-0.47\%$
& $446{,}603$
& $+0.45\%$
& $6.69\mathbin{\cdot}10^{-10}$
& $5.763\mathbin{\cdot}10^{-4}$
& $4.296\mathbin{\cdot}10^{-5}$ \\

$2\times4$
& $33{,}141{,}355$
& $6{,}561$
& $5{,}051.27$
& $73.93$
& $71.07$
& $1.040$
& $4{,}755$
& $-5.87\%$
& $5{,}311$
& $+5.14\%$
& $4.98\mathbin{\cdot}10^{-12}$
& $9.651\mathbin{\cdot}10^{-4}$
& $1.426\mathbin{\cdot}10^{-4}$ \\

$3\times4$
& $16{,}456{,}380$
& $531{,}441$
& $30.97$
& $5.56$
& $5.56$
& $0.999$
& $9$
& $-70.94\%$
& $60$
& $+93.76\%$
& $1.14\mathbin{\cdot}10^{-13}$
& $9.371\mathbin{\cdot}10^{-4}$
& $3.759\mathbin{\cdot}10^{-4}$ \\

\addlinespace
\multicolumn{14}{l}{\textbf{F100k-5}} \\
\addlinespace[2pt]
$1\times4$
& $34{,}242{,}370$
& $81$
& $422{,}745.31$
& $614.90$
& $646.16$
& $0.952$
& $421{,}417$
& $-0.31\%$
& $424{,}391$
& $+0.39\%$
& $3.22\mathbin{\cdot}10^{-10}$
& $1.376\mathbin{\cdot}10^{-4}$
& $8.489\mathbin{\cdot}10^{-6}$ \\

$2\times4$
& $33{,}668{,}895$
& $6{,}561$
& $5{,}131.67$
& $71.58$
& $71.63$
& $0.999$
& $4{,}868$
& $-5.14\%$
& $5{,}448$
& $+6.16\%$
& $4.52\mathbin{\cdot}10^{-12}$
& $1.950\mathbin{\cdot}10^{-4}$
& $1.950\mathbin{\cdot}10^{-5}$ \\

$3\times4$
& $18{,}281{,}300$
& $531{,}441$
& $34.40$
& $5.87$
& $5.87$
& $1.000$
& $11$
& $-68.02\%$
& $65$
& $+88.96\%$
& $1.03\mathbin{\cdot}10^{-13}$
& $3.063\mathbin{\cdot}10^{-4}$
& $5.446\mathbin{\cdot}10^{-5}$ \\

\addlinespace
\multicolumn{14}{l}{\textbf{F1k-6}} \\
\addlinespace[2pt]
$1\times5$
& $43{,}099{,}798$
& $243$
& $177{,}365.42$
& $1{,}549.96$
& $420.28$
& $3.688$
& $172{,}642$
& $-2.66\%$
& $181{,}124$
& $+2.12\%$
& $1.29\mathbin{\cdot}10^{-9}$
& $2.838\mathbin{\cdot}10^{-3}$
& $1.096\mathbin{\cdot}10^{-4}$ \\

$2\times5$
& $38{,}339{,}437$
& $59{,}049$
& $649.28$
& $27.98$
& $25.48$
& $1.098$
& $536$
& $-17.45\%$
& $773$
& $+19.05\%$
& $5.33\mathbin{\cdot}10^{-13}$
& $5.655\mathbin{\cdot}10^{-3}$
& $2.789\mathbin{\cdot}10^{-4}$ \\

\addlinespace
\multicolumn{14}{l}{\textbf{F10k-6}} \\
\addlinespace[2pt]
$1\times5$
& $42{,}014{,}728$
& $243$
& $172{,}900.12$
& $453.95$
& $414.96$
& $1.094$
& $171{,}727$
& $-0.68\%$
& $173{,}960$
& $+0.61\%$
& $1.17\mathbin{\cdot}10^{-10}$
& $4.687\mathbin{\cdot}10^{-4}$
& $2.792\mathbin{\cdot}10^{-5}$ \\

$2\times5$
& $40{,}899{,}399$
& $59{,}049$
& $692.63$
& $26.43$
& $26.32$
& $1.004$
& $583$
& $-15.83\%$
& $803$
& $+15.93\%$
& $4.17\mathbin{\cdot}10^{-13}$
& $9.322\mathbin{\cdot}10^{-4}$
& $3.463\mathbin{\cdot}10^{-5}$ \\

\addlinespace
\multicolumn{14}{l}{\textbf{F100k-6}} \\
\addlinespace[2pt]
$1\times5$
& $41{,}218{,}322$
& $243$
& $169{,}622.72$
& $417.16$
& $411.00$
& $1.015$
& $168{,}629$
& $-0.59\%$
& $170{,}741$
& $+0.66\%$
& $1.02\mathbin{\cdot}10^{-10}$
& $1.894\mathbin{\cdot}10^{-4}$
& $6.363\mathbin{\cdot}10^{-6}$ \\

$2\times5$
& $42{,}714{,}501$
& $59{,}049$
& $723.37$
& $26.83$
& $26.90$
& $0.997$
& $611$
& $-15.53\%$
& $844$
& $+16.68\%$
& $3.94\mathbin{\cdot}10^{-13}$
& $2.365\mathbin{\cdot}10^{-4}$
& $1.529\mathbin{\cdot}10^{-5}$ \\

\addlinespace
\multicolumn{14}{l}{\textbf{F10k-7}} \\
\addlinespace[2pt]
$1\times6$
& $51{,}722{,}830$
& $729$
& $70{,}950.38$
& $291.67$
& $266.18$
& $1.096$
& $70{,}104$
& $-1.19\%$
& $71{,}706$
& $+1.06\%$
& $3.18\mathbin{\cdot}10^{-11}$
& $6.133\mathbin{\cdot}10^{-4}$
& $6.832\mathbin{\cdot}10^{-6}$ \\

$2\times6$
& $52{,}656{,}296$
& $531{,}441$
& $99.08$
& $9.97$
& $9.95$
& $1.002$
& $58$
& $-41.46\%$
& $150$
& $+51.39\%$
& $3.59\mathbin{\cdot}10^{-14}$
& $1.075\mathbin{\cdot}10^{-3}$
& $2.283\mathbin{\cdot}10^{-5}$ \\

\addlinespace
\multicolumn{14}{l}{\textbf{F1k-8}} \\
\addlinespace[2pt]
$1\times7$
& $71{,}089{,}070$
& $2{,}187$
& $32{,}505.29$
& $370.55$
& $180.25$
& $2.056$
& $31{,}096$
& $-4.34\%$
& $33{,}562$
& $+3.25\%$
& $2.72\mathbin{\cdot}10^{-11}$
& $3.148\mathbin{\cdot}10^{-3}$
& $1.982\mathbin{\cdot}10^{-5}$ \\

\addlinespace
\multicolumn{14}{l}{\textbf{F1k-9}} \\
\addlinespace[2pt]
$1\times8$
& $71{,}563{,}599$
& $6{,}561$
& $10{,}907.42$
& $156.61$
& $104.43$
& $1.500$
& $10{,}342$
& $-5.18\%$
& $11{,}463$
& $+5.09\%$
& $4.79\mathbin{\cdot}10^{-12}$
& $3.243\mathbin{\cdot}10^{-3}$
& $6.811\mathbin{\cdot}10^{-6}$ \\

\addlinespace
\multicolumn{14}{l}{\textbf{F1k-10}} \\
\addlinespace[2pt]
$1\times9$
& $71{,}962{,}170$
& $19{,}683$
& $3{,}656.06$
& $73.22$
& $60.46$
& $1.211$
& $3{,}370$
& $-7.82\%$
& $4{,}006$
& $+9.57\%$
& $1.04\mathbin{\cdot}10^{-12}$
& $3.384\mathbin{\cdot}10^{-3}$
& $3.391\mathbin{\cdot}10^{-6}$ \\

\addlinespace
\multicolumn{14}{l}{\textbf{F1k-11}} \\
\addlinespace[2pt]
$1\times10$
& $72{,}331{,}479$
& $59{,}049$
& $1{,}224.94$
& $37.89$
& $35.00$
& $1.083$
& $1{,}065$
& $-13.06\%$
& $1{,}384$
& $+12.99\%$
& $2.74\mathbin{\cdot}10^{-13}$
& $3.460\mathbin{\cdot}10^{-3}$
& $1.078\mathbin{\cdot}10^{-6}$ \\

\addlinespace
\multicolumn{14}{l}{\textbf{F1k-12}} \\
\addlinespace[2pt]
$1\times11$
& $72{,}648{,}069$
& $177{,}147$
& $410.10$
& $20.87$
& $20.25$
& $1.031$
& $322$
& $-21.48\%$
& $504$
& $+22.90\%$
& $8.25\mathbin{\cdot}10^{-14}$
& $3.668\mathbin{\cdot}10^{-3}$
& $3.455\mathbin{\cdot}10^{-7}$ \\

\end{longtable}
\end{landscape}

	\bibliography{ref}
	\bibliographystyle{alpha}

\end{document}